\newcommand{\Rd}{\mathbb{R}^d}
\newcommand{\E}{\mathbb{E}}
\newcommand{\KSD}{\mathrm{KSD}}
\newcommand{\MMD}{\mathrm{MMD}}
\newcommand{\LKSD}{\textnormal{L-KSD}}
\newcommand{\bP}{\mathbb{P}}
\newcommand{\bQ}{\mathbb{Q}}
\newcommand{\bx}{\mathbf{x}}
\newcommand{\by}{\mathbf{y}}
\newcommand{\bX}{\mathbf{X}}
\newcommand{\bZ}{\mathbf{Z}}
\newcommand{\bbeta}{\boldsymbol{\beta}}
\newcommand{\btheta}{\boldsymbol{\theta}}
\newcommand{\bs}{\boldsymbol}
\newcommand{\bM}{\mathcal{M}_{s_0}}
\newtheorem{theorem}{Theorem}[section]
\newtheorem{lemma}[theorem]{Lemma}
\newtheorem{corollary}[theorem]{Corollary}
\theoremstyle{definition}
\newtheorem{definition}[theorem]{Definition}
\newtheorem{assumption}[theorem]{Assumption}
\newtheorem{remark}[theorem]{Remark}
\newtheorem{example}{Example}
\title{Kernel Stein Discrepancy thinning: a theoretical perspective of pathologies and a practical fix with regularization}
\author{
  \textbf{Cl\'ement B\'enard}$^{1}$
  \qquad \textbf{Brian Staber}$^{1}$
  \qquad \textbf{S\'ebastien Da Veiga}$^{2}$ \\
  $^1$ Safran Tech, Digital Sciences \& Technologies, 78114 Magny-Les-Hameaux, France \\
  $^2$ ENSAI, CREST, F-35000 Rennes, France \\
  \texttt{\{clement.benard, brian.staber\}@\{safrangroup.com\}} \\
  \texttt{sebastien.da-veiga@ensai.fr}
}
\begin{document}

\maketitle

\begin{abstract}
    Stein thinning is a promising algorithm proposed by \cite{riabiz2020optimal} for post-processing outputs of Markov chain Monte Carlo (MCMC). The main principle is to greedily minimize the kernelized Stein discrepancy (KSD), which only requires the gradient of the log-target distribution, and is thus well-suited for Bayesian inference. 
    The main advantages of Stein thinning are the automatic remove of the burn-in period, the correction of the bias introduced by recent MCMC algorithms, and the asymptotic properties of convergence towards the target distribution.
    Nevertheless, Stein thinning suffers from several empirical pathologies, which may result in poor approximations, as observed in the literature.
    In this article, we conduct a theoretical analysis of these pathologies, to clearly identify the mechanisms at stake, and suggest improved strategies. Then, we introduce the regularized Stein thinning algorithm to alleviate the identified pathologies. Finally, theoretical guarantees and extensive experiments show the high efficiency of the proposed algorithm.
    An implementation of regularized Stein thinning as the \texttt{kernax} library in python and JAX is available at \href{https://gitlab.com/drti/kernax}{https://gitlab.com/drti/kernax}.
\end{abstract}

\section{Introduction}

Bayesian inference is a powerful approach to solve statistical tasks, and is especially efficient to incorporate prior expert knowledge of the studied system, or to provide uncertainties of the estimated quantities. Bayesian methods have thus demonstrated a high empirical performance for a wide range of applications, in particular in the fields of physics and computational biology, to just name a few. 
However, the Bayesian framework often leads to the evaluation of expectations with respect to a posterior distribution, which is not tractable \citep{green2015bayesian}, except in the specific case of conjugate prior distribution and likelihood, which hardly occurs in practice. To overcome this issue, Markov chain Monte Carlo (MCMC) is one of the most commonly used computational methods to estimate these integrals. Indeed, MCMC algorithms iteratively generate a sample, which follows the targeted posterior distribution, as the Markov chain converges to its stationary state \citep{robert1999monte,brooks2011handbook}. Consequently, the quality of the resulting estimates strongly depends on the convergence of the MCMC and how its output is post-processed.
Standard post-processing procedures of MCMC outputs consist in removing the first iterations, called the burn-in period, and thinning the Markov chain with a constant frequency. Burn-in removal aims at reducing the bias introduced by the random initialization of the Markov chain. The $\smash{\hat{R}}$ convergence diagnosis of \citet{gelman1995bayesian} is, for instance, a well known method for determining the burn-in period. On the other hand, thinning the Markov chain allows for compressing the MCMC output and may also reduce the correlation between the iteratively selected points. More recently, promising kernel-based procedures were proposed to automatically remove the burn-in period, compress the output, and reduce the asymptotic bias \citep{south2022postprocessing}. These approaches consist in minimizing a kernel-based discrepancy measure $D(\bP,\bQ_m)$ between the empirical distribution $\bQ_m$ of a subsample of the MCMC output of size $m$, and the target distribution $\bP$. In this respect, minimization of the maximum mean discrepancy (MMD) was investigated by several authors, but these strategies require the full knowledge of the target distribution $\bP$, whose density is not tractable in non-conjugate Bayesian inference. 

Based on the previous works of \citet{chen2018stein} and \citet{chen2019stein}, \citet{riabiz2020optimal} propose to minimize the kernelized Stein discrepancy (KSD), to design an efficient kernel-based algorithm to thin MCMC outputs in a non-tractable Bayesian setting. The KSD \citep{liu2016kernelized} is a score-based discrepancy measure, \textit{i.e.}, it only requires the knowledge of the score function of the target $\bP$, which is readily available in our Bayesian framework. Importantly, \citet{gorham2017measuring} showed that under suitable mild conditions, the KSD enjoys good convergence properties. More precisely, the KSD is a valid distance to detect samples drawn form the target distribution, provided that the sample size is large enough. Therefore, KSD thinning is a highly promising tool for post-processing and measuring the quality of MCMC outputs. This article thus focuses on the Stein thinning algorithm proposed by \citet{riabiz2020optimal}, which consists in selecting $m$ points amongst the $n$ iterations of the MCMC output, by greedily minimizing the KSD distance. Thanks to the convergence properties of the KSD, the empirical measure of the selected points weakly converges towards the posterior law $\bP$. However, on the practical side, several articles \citep{wenliang2020blindness, korba2021kernel} have noticed empirical limitations of KSD-based sampling algorithms, especially for multimodal target distributions. In fact, these limitations happen to be quite problematic, even in simple experiments, and have been slightly overlooked in the literature so far, in our opinion. Therefore, this article first focuses on the analysis of KSD pathologies in Section \ref{sec:pathologies}, taking both an empirical and theoretical point of view. Then, we propose strategies to mitigate the identified problems, and introduce the regularized Stein thinning in Section \ref{sec:regularized_thinning}. We show the efficiency of our algorithm through both a theoretical analysis and extensive experiments in Section \ref{sec:experiments}. Notice that proofs and additional experiments are gathered in Appendices $1$-$7$ in the Supplementary Material.
In the remaining of this initial section, we mathematically formalize the KSD distance and the associated Stein thinning algorithm.

\paragraph{Kernelized Stein discrepancy.}
Kernelized Stein discrepancy was independently introduced by \cite{chwialkowski2016kernel,liu2016kernelized,gorham2017measuring} as a promising tool for measuring dissimilarities between two distributions $\bP$ and $\bQ$ on $\Rd$ with $d \geq 1$, whenever $\bP$ admits a continuously differentiable density $p$, and the normalization constant of $p$ is not tractable. Let $k : \mathbb{R}^d \times \mathbb{R}^d \rightarrow \mathbb{R}$ be a positive semi-definite kernel and let $\mathcal{H}(k)$ be the associated reproducing kernel Hilbert space (RKHS) with inner product $\langle \cdot, \cdot \rangle_{\mathcal{H}(k)}$ and norm $\|\cdot\|_{\mathcal{H}(k)}$. Kernelized Stein discrepancy belongs to the family of maximum mean discrepancies (MMD) \citep{gretton2006kernel} defined as \vspace*{-3mm}
\begin{align} \label{eq:mmd_def}
    \MMD_k(\bP,\bQ) = \sup_{\|f\|_{\mathcal{H}(k)} \leq 1} |\E[f(\bX)] - \E[f(\bZ)]|\,,
\end{align}
where $\bX \sim \bP$, $\bZ \sim \bQ$. If the kernel $k$ is characteristic, then the MMD is a distance between probability distributions. In practice, the MMD may not be computable as it involves mathematical expectations with respect to $\bP$, whose density is not tractable. To circumvent this issue, \citet{gorham2015measuring} proposed the Stein discrepancy which relies on Stein's method \citep{stein1972bound}. It consists in defining an operator $\mathcal{T}_p$ that maps functions $g : \mathbb{R}^d \rightarrow \mathbb{R}^d$ to real-valued functions such that $\E[\mathcal{T}_p g(\bX)] = 0$, with $\bX \sim \bP$, for all $g$ in $\smash{\mathcal{G}(k) = \{ g : \mathbb{R}^d \rightarrow \mathbb{R}^d: \sum_{i=1}^{d} \|g_i\|^2_{\mathcal{H}(k)} \leq 1 \}}$.
The probability measure $\bP$ on $\smash{\Rd}$ is assumed to admit a continuously differentiable Lebesgue density $\smash{p \in \mathcal{C}^1(\Rd)}$, such that $\E[\|\nabla \log p(\bX)\|_2^2] < \infty$. The Stein discrepancy is then defined as $\smash{\mathrm{SD}(\bP, \bQ) = \sup_{g \in \mathcal{G}(k)} |\E[(\mathcal{T}_pg)(\bZ)]|}$, where $\bZ \sim \bQ$. If the Stein operator $\mathcal{T}_p$ is chosen as the Langevin operator $\smash{(\mathcal{T}_pg)(\bx) = \langle g(\bx), \nabla \log p(\bx) \rangle + \langle \nabla, g(\bx) \rangle}$, then Stein's discrepancy has a closed-form expression known as kernelized Stein discrepancy \citep{chwialkowski2016kernel,liu2016kernelized}, $\KSD^2(\bP, \bQ) = \E[k_p(\bZ, \bZ')]$, where $\bZ \sim \bQ, \bZ' \sim \bQ$, and $k_p$ denotes the Langevin Stein kernel defined from the score function $s_p(\bx) = \nabla \log p(\bx)$ for $\bx, \bx' \in \Rd$, as
\begin{align} \label{eq:langevin_stein_kernel}
    k_p(\bx, \bx') = &\langle \nabla_\bx, \nabla_{\bx'} k(\bx,\bx') \rangle + \langle s_p(\bx), \nabla_{\bx'} k(\bx,\bx') \rangle \nonumber \\  
    & + \langle s_p(\bx'), \nabla_\bx k(\bx, \bx') \rangle + \langle s_p(\bx), s_p(\bx') \rangle k(\bx, \bx')\,.
\end{align}
The main advantage of the KSD is that it only requires the knowledge of the score function, and does not involve any integration with respect to $\bP$. \citet{gorham2017measuring} also established convergence guaranties when the kernel $k$ is chosen as the inverse multi-quadratic (IMQ) kernel function $k(\bx, \bx') = (c + \|\bx-\bx'\|^2_{\Gamma})^{-\beta}$ with $c > 0$, $\beta \in (0,1)$, the positive definite matrix $\Gamma$ is the identity matrix, and the density $p$ is distantly dissipative as defined below. Log-concave distributions outside of a compact set are a typical example of such probability densities.
\begin{definition}[Distant dissipativity \citet{gorham2017measuring}] \label{def_dissipative}
    The density $p \in \mathcal{C}^1(\Rd)$ is distantly dissipative if $\liminf \limits_{r \to \infty} \kappa(r) > 0$, where $\kappa(r) = \inf \Big\{ -2 \frac{\langle s_p(\bx) - s_p(\by), \bx-\by\rangle}{\|\bx-\by\|_2^2} : \|\bx-\by\|_2 = r \Big\}$.
\end{definition}

\paragraph{Stein thinning algorithm.}
Let $\bP$ be a target probability measure that admits density $p$, and let $\{ \bx_i \}_{i=1}^{n} \subset \mathbb{R}^d$ be a MCMC output. The Stein thinning algorithm \citep{riabiz2020optimal} selects $m \leq n$ particles $\bx_{\pi_1}, \dots, \bx_{\pi_m}$ by greedily minimizing the kernelized Stein discrepancy. Given $t - 1 < m$ particles $\bx_{\pi_1}, \dots, \bx_{\pi_{t-1}}$, the $t$-th particle is defined as \vspace*{-3mm}
\begin{equation*} 
  \pi_t \in \underset{i \in \{1,\dots,n\}}{\mathrm{argmin}} k_p(\bx_i,\bx_i) + 2\sum_{j=1}^{t-1} k_p(\bx_{\pi_j}, \bx_i)\,,
\end{equation*}
where the KSD of an empirical distribution has been used to simplify the objective function. The kernel function $k$ is usually chosen as the IMQ kernel function, defined above, for both its good theoretical properties and empirical efficiency. Indeed, several articles \citep{chen2018stein, riabiz2020optimal} have led extensive experiments to show the better practical performance of the IMQ kernel over other choices.
Also notice that the bandwidth parameter $\ell$ is quite influential on the algorithm performance, but happens to be very difficult to tune, as highlighted by \citet{chopin2021fast}. Indeed, since the normalization constant of the target distribution is unknown, no additional metric is available to assess the precise performance of the thinning procedure when $\ell$ varies. Furthermore, the sample quality output by Stein thinning varies in an erratic fashion with respect to $\ell$, making the design of heuristic procedures for the choice of $\ell$ notoriously difficult. Following the literature recommendations \citep{riabiz2020optimal}, we use the median heuristic to set $\ell$ in our experiments, and refer to \citet{garreau2017large} for an extensive analysis of this approach for kernel methods.

\section{Analysis of KSD Pathologies} \label{sec:pathologies}

Although kernelized Stein discrepancy is a highly promising approach to thin MCMC outputs, several empirical studies have highlighted that KSD-based algorithms may suffer from strong pathologies in simple experiments \citep{wenliang2020blindness, korba2021kernel, riabiz2020optimal, liu2023using}. 
The most established KSD pathology is that Stein thinning ignores the weights of distant modes of the target distribution, leading to the selection of samples of poor quality by Stein thinning.
This problem, called Pathology I throughout the article, is analyzed in Subsection \ref{sec:pathologyI}.
Additionally, \citet{korba2021kernel} also notice that KSD thinning may result in samples concentrated in regions of low probability of $p$. 
As opposed to Pathology I, the mechanism leading to this problematic behavior is not well understood in the literature, to our best knowledge. Subsection \ref{sec:pathologyII} is thus dedicated to the theoretical characterization and illustration of Pathology II.
Throughout the article, we illustrate KSD thinning using the running example of a Gaussian mixture, defined in Example \ref{example_mixture} below, where initial particles are directly sampled from $p$ to better highlight pathologies.
We will come back to the thinning of MCMC outputs in detail in Section \ref{sec:experiments}.
\begin{example} \label{example_mixture}
    Let the density $p$ be a Gaussian mixture model of two components, respectively centered in $\smash{(-\mu,\mathbf{0}_{d-1})}$ and $\smash{(\mu, \mathbf{0}_{d-1})}$, of weights $w$ and $1 - w$, and of variance $\smash{\sigma^2 \mathbf{I_d}}$.
    The initial particles $\{\bx_i\}_{i=1}^n$ are drawn from $p$. The KSD thinning algorithm selects $m < n$ points to approximate $p$.
\end{example}

\subsection{Pathology I: mode proportion blindness} \label{sec:pathologyI}
We first focus on Pathology I, which states that Stein thinning is blind to the relative weights of multiple distant modes of a target distribution. Indeed, \citet{wenliang2020blindness} show that the score $s_p$ is insensitive to distant mode weights. Consequently, the KSD distance is unable to properly identify samples with different weights than those of the target, in finite sample settings, as long as samples are accurately distributed within each mode.
To be more specific, we illustrate this pathology with our Example \ref{example_mixture} of a Gaussian mixture in dimension $2$. We set $\mu = 3$ and $\sigma = 1$ to enforce the two modes to be well separated, and take an unbalanced proportion $w = 0.2$ for the left mode, and $1-w = 0.8$ for the right mode. We generate $n = 3000$ observations and select $m = 300$ particles with Stein thinning. Clearly, the red selected sample displayed in Figure \ref{fig:pathology_I_illustrations} has wrong proportions, with about half of the particles in each mode, instead of the expected $20-80$\%, reflected by the initial black particles sampled from $p$. More precisely, over $100$ repetitions of the Stein thinning algorithm, we obtain an average proportion of $0.53$ particles in the left mode, with a standard deviation of $0.08$ across the $100$ runs.
\begin{wrapfigure}{r}{0.35 \textwidth}
    \begin{center}
        \includegraphics[width=0.35 \textwidth]{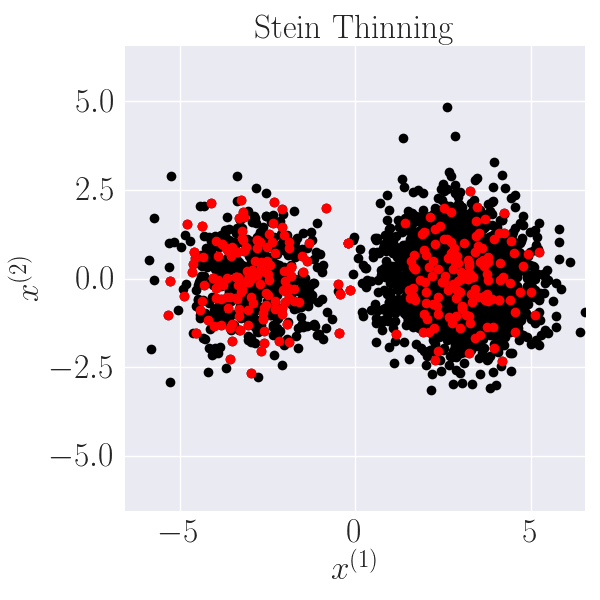}
    \end{center}
    \caption{Illustration of Pathology I with the Gaussian mixture of Example \ref{example_mixture} ($d = 2$, $\mu = 3$, $\sigma = 1$, $w = 0.2$, $n = 3000$, $m = 300$). Initial particles are in black, and the Stein thinning output is red.}
    \label{fig:pathology_I_illustrations}
\end{wrapfigure}

Although \citet{wenliang2020blindness} clearly show that the KSD distance is insensitive to the mode weights in the specific case of Gaussian mixtures, the mechanism leading to the selection of about half of the particles in each mode by Stein thinning, as in Example \ref{example_mixture}, remains unexplained in the literature, to our best knowledge. Therefore, we conduct a theoretical analysis in the general case of any mixture distribution with two distant modes, stated in Assumption \ref{def_bimodal} below. For the sake of clarity, we only study the case of a number of modes of two, without loss of generality.
Importantly, notice that a finite sample drawn from a distribution with distant modes, takes the form of clusters of particles around each mode, as illustrated in Figure \ref{fig:pathology_I_illustrations}. Then, Stein thinning selects particles among these clusters to approximate $p$, and these particles define an empirical law of a density $q$ with a compact support around each mode.
\citet{wenliang2020blindness} explain that the score $s_p$ is especially insensitive to the mode weights in these compact areas around modes, which is the root cause of the generation of samples with wrong proportions, as in Figure \ref{fig:pathology_I_illustrations}.
Therefore, Assumption \ref{def_bimodal} below defines this observed setting, required to have Pathology I to occur, where density $q$ has compact supports around each distant mode.
Additionally, we also need to formalize Assumption \ref{assumption_epsilon}, which tells that the distributions of the two modes of the mixture $q$ have a close KSD distance with respect to the target $p$. In particular, this assumption can be easily verified when both $p$ and $q$ have symmetric mode distributions, since the KSD distance is insensitive to the weights of $p$.
\begin{assumption}[Distant bimodal mixture distributions] \label{def_bimodal}
    Let $p$ and $q$ be two mixture distributions in $\Rd$, made of two modes centered in $(-\mu,\mathbf{0}_{d-1})$ and $(\mu, \mathbf{0}_{d-1})$, with $\mu > 0$. The distribution of each mode of $\smash{p \in \mathcal{C}^1(\Rd)}$ has $\smash{\Rd}$ as support, whereas each mode distribution of $q$ have a compact support, included in a ball of radius $r > 0$, with $r < \mu$. The left mode of $p$ has weight $w_p \neq 1/2$, and the right mode has weight $1 - w_p$. Similarly, $w$ and $1 - w$ are the mode weights of $q$. Let $\bQ_L$ and $\bQ_R$ be the probability measures that respectively admit the density of the left and right modes of $q$, and $\bP$ and $\bQ_w$ be also the probability laws for $p$ and $q$.
\end{assumption}
\begin{assumption} \label{assumption_epsilon}
    For distant bimodal mixture distributions $q$ and $p$ satisfying Assumption \ref{def_bimodal}, and for $\eta \in  (0,1)$, we have $ \left| \KSD^2(\bP, \bQ_L)/\KSD^2(\bP, \bQ_R) - 1 \right| < \eta$.
\end{assumption}
\begin{theorem} \label{thm_modes}
    Let $k_p$ be the Stein kernel associated with the radial kernel $k(\bx, \bx') = \phi(\|\bx-\bx'\|_2/\ell)$, where $\smash{\bx, \bx' \in \Rd}$, $\ell > 0$, and $\smash{\phi \in \mathcal{C}^2(\mathbb{R})}$, such that $\phi(z) \rightarrow 0$, $\phi'(z) \rightarrow 0$, and $\phi''(z) \rightarrow 0$ for $z \to \infty$. Let $p$ and $q$ be two bimodal mixture distributions satisfying Assumptions \ref{def_bimodal} and \ref{assumption_epsilon}, for any $\eta \in  (0,1)$. We define $w^{\star}$ as the optimal mixture weight of $q$ with respect to the KSD distance, i.e., $\smash{w^{\star} = \underset{{w \in [0,1]}}{\mathrm{argmin}} \: \KSD(\bP, \bQ_w)}$.
    Then, for $\mu$ large enough, we have $\smash{\left|w^{\star} - \frac{1}{2}\right| < \frac{\eta}{2(1 - \eta)}}$.
\end{theorem}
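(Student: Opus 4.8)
The plan is to exploit the mixture structure $\bQ_w = w\,\bQ_L + (1-w)\,\bQ_R$ to turn $w \mapsto \KSD^2(\bP,\bQ_w)$ into an explicit univariate quadratic, to show that its ``mode--mode interaction'' coefficient vanishes as $\mu\to\infty$, and then to read off the minimizer directly.

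\emph{Step 1 (quadratic reduction).} Using $\KSD^2(\bP,\bQ)=\E[k_p(\bZ,\bZ')]$ for i.i.d.\ $\bZ,\bZ'\sim\bQ$ together with the symmetry of $k_p$ in \eqref{eq:langevin_stein_kernel}, I would expand
\begin{equation*}
  \KSD^2(\bP,\bQ_w) = w^2 A + (1-w)^2 B + 2\,w(1-w)\,C ,
\end{equation*}
where $A:=\KSD^2(\bP,\bQ_L)$, $B:=\KSD^2(\bP,\bQ_R)$ and $C:=\E[k_p(\bZ_L,\bZ_R)]$ with independent $\bZ_L\sim\bQ_L$, $\bZ_R\sim\bQ_R$. (Equivalently, since $k_p$ is a reproducing kernel and $\bP$ has vanishing Stein mean embedding, this is $\lVert w\,u+(1-w)\,v\rVert^2$ with $A=\lVert u\rVert^2$, $B=\lVert v\rVert^2$, $C=\langle u,v\rangle$, whence $\lvert C\rvert\le\sqrt{AB}$.) Whenever $A+B-2C>0$ the quadratic is strictly convex with minimizer $w^\star=(B-C)/(A+B-2C)\in(0,1)$, and a one-line computation yields $w^\star-\tfrac12 = (B-A)/\bigl(2(A+B-2C)\bigr)$.

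\emph{Step 2 (the interaction term is negligible).} The key point to establish is that $C\to 0$ while $A$ and $B$ stay bounded away from $0$ as $\mu\to\infty$. For $C$: on $\mathrm{supp}(\bQ_L)\times\mathrm{supp}(\bQ_R)$ we have $\lVert\bZ_L-\bZ_R\rVert_2\ge 2\mu-2r\to\infty$, and by \eqref{eq:langevin_stein_kernel} the value $k_p(\bZ_L,\bZ_R)$ is a fixed linear combination of $\phi,\phi',\phi''$ evaluated at $\lVert\bZ_L-\bZ_R\rVert_2/\ell$ times factors that are polynomial in $s_p(\bZ_L),s_p(\bZ_R)$; since the two mode shapes of $p$ are fixed and merely translated to $\pm\mu$ along the first axis, $s_p$ stays uniformly bounded over the (translated) compact supports of $\bQ_L,\bQ_R$, so the hypotheses $\phi,\phi',\phi''\to0$ force $\sup_{\mathrm{supp}(\bQ_L)\times\mathrm{supp}(\bQ_R)}\lvert k_p\rvert\to0$ and hence $C\to0$. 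For $A,B$: on $\mathrm{supp}(\bQ_L)$ the far mode contributes negligibly to $p$, so $s_p$ converges to the score of the translated left mode of $p$, $A$ converges to the KSD of $\bQ_L$ against that limiting score, and this limit is a strictly positive constant because $\bQ_L$ differs from the corresponding distribution (likewise for $B$). I expect this step to be the main obstacle: one must make precise, for a general two-mode mixture rather than the Gaussian running example of Example~\ref{example_mixture}, both the uniform-in-$\mu$ control of $s_p$ on the mode supports and the positivity of the limits of $A$ and $B$.

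\emph{Step 3 (conclusion).} Fix $\eta\in(0,1)$. Assumption~\ref{assumption_epsilon} yields $\lvert A-B\rvert<\eta B$ and $A>(1-\eta)B$, hence $A+B>(2-\eta)B$. Using Step~2, choose $\mu$ large enough that $2\lvert C\rvert<\min(A,B)$; then $A+B-2C>0$, Step~1 applies, and
\begin{equation*}
  \Bigl\lvert w^\star-\tfrac12\Bigr\rvert
  = \frac{\lvert A-B\rvert}{2\,(A+B-2C)}
  < \frac{\eta B}{2\bigl((2-\eta)B-2\lvert C\rvert\bigr)}
  < \frac{\eta B}{2\,(1-\eta)B}
  = \frac{\eta}{2\,(1-\eta)},
\end{equation*}
the last inequality being precisely $B>2\lvert C\rvert$. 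This gives the stated bound.
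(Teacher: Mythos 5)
Your proposal is correct and follows essentially the same route as the paper: expand $\KSD^2(\bP,\bQ_w)$ bilinearly into $w^2A+(1-w)^2B+2w(1-w)C$ (the paper calls your $C$ the term $\Delta_{L,R}$), show $C\to 0$ as $\mu\to\infty$ using the decay of $\phi,\phi',\phi''$ (the paper packages this as Lemma A.1), and then bound the explicit quadratic minimizer $w^\star=(B-C)/(A+B-2C)$ via Assumption \ref{assumption_epsilon}. The one concern you flag in Step~2 — that $A$ and $B$ must stay bounded away from $0$ while $C\to0$ so that $2|C|<\min(A,B)$ can be achieved — is real but is also left implicit in the paper (which simply writes ``we can have $\Delta_{L,R}<\KSD^2(\bP,\bQ_R)/2$'' without arguing it), so your treatment is at the same level of rigor; your RKHS inner-product observation $|C|\le\sqrt{AB}$ and the identity $A+B-2C=\|u-v\|^2\ge0$ are a slightly cleaner way to see the convexity than the paper's (in fact mis-stated) claim that the leading coefficient equals $\KSD^2(\bP,\bQ_{1/2})/4$.
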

Theorem \ref{thm_modes}, proved in Appendix \ref{app:thm_modes}, states that the weight $w^{\star}$ of the optimal mixture $q$, which minimizes the KSD distance to the target $p$, is close to $1/2$ regardless of the true target weight $w_p$, whenever the distributions of the two modes of the mixture $q$ have a close KSD distance to $p$, and provided that the two modes are distant. In particular, this is the case in the experiment of Example \ref{example_mixture} and Figure \ref{fig:pathology_I_illustrations}, where the two modes are symmetric and well separated. Additionally, a more specific empirical illustration of Theorem \ref{thm_modes} can be found in Appendix \ref{app:thm_modes_illustration}.
In Section \ref{sec:regularized_thinning}, we will propose strategies improving Stein thinning to recover samples with accurate mode proportions.

\subsection{Pathology II: spurious minimum} \label{sec:pathologyII}

The core of this section is dedicated to the theoretical characterization of Pathology II. 
We first need to introduce additional notations to formalize our analysis. We thus define $\mathcal{M}_{s_0}$, the region of the input space where the score norm is lower than the threshold $s_0 \geq 0$, formally $\mathcal{M}_{s_0} = \{\bx \in \Rd : \|s_p(\bx)\|_2 \leq s_0 \}$. We also introduce an independent and identically distributed (iid) sample $\bX_1, \hdots, \bX_m$ of $\bP$, with $\bP_m$ the associated empirical measure for a positive integer $m$, and $\smash{X^{(j)}}$ the $j$-th component of $\bX$.
Then, Theorem \ref{thm_fly} below shows that samples concentrated in regions of the input space where the norm of the score is low, have smaller KSD than samples drawn from the true target distribution $p$, for small sample sizes. Additionally, the score norm is low around stationary points of $p$, including local minimum and saddle points, as shown in Corollary \ref{corollary_saddle} below. However, samples concentrated at local minimum of $p$ are bad approximations of the target distribution by definition. Therefore, pathological samples may be generated by Stein thinning, which minimizes the empirical KSD, and thus explains Pathology II observed by \citet{korba2021kernel}, and shown in Figure \ref{fig:pathology_II_illustrations}. 
For the sake of clarity, we formalize our result for the IMQ kernel used in practice, and set $c = 1$ without loss of generality, since it is equivalent to tune $c$ or $\ell$ in the Stein thinning algorithm. 
\begin{theorem}[KSD spurious minimum] \label{thm_fly}
    Let $k_p$ be the Stein kernel associated with the IMQ kernel with $\ell > 0$, $\beta \in (0,1)$, and $c = 1$. Let $\smash{\{\bx_i\}_{i=1}^m \subset \mathcal{M}_{s_0} = \{\bx \in \Rd : \|s_p(\bx)\|_2 \leq s_0 \}}$ be a fixed set of points of empirical measure $\smash{\bQ_m = \frac{1}{m} \sum_{i=1}^{m} \delta(\bx_i)}$, with $s_0 \geq 0$ and $m \geq 2$. We have $\smash{\KSD^2\big(\bP, \bQ_m\big) < \E[\KSD^2\big(\bP, \bP_m\big)]}$, if the score threshold $s_0$  and the sample size $m$ are small enough to satisfy $\smash{m < 1 + (\E[\|s_p(\bX)\|_2^2] - s_0^2) / (2\beta d /\ell^2 + 2\beta s_0/\ell + s_0^2)}$.
\end{theorem}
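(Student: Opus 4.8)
The plan is to rewrite both sides as double sums of the Langevin Stein kernel, evaluate the reference side exactly via Stein's identity, and bound the left-hand side termwise using that every $\bx_i$ lies in $\bM$.

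\textbf{Step 1: the reference expectation.} Since $\bP_m=\frac1m\sum_{i=1}^m\delta(\bX_i)$, one has $\E[\KSD^2(\bP,\bP_m)]=\frac1{m^2}\sum_{i,j=1}^m\E[k_p(\bX_i,\bX_j)]$. The key remark is that $\bx\mapsto k_p(\bx,\bx')$ is itself in the range of the Langevin operator: setting $h_{\bx'}(\bx)=\nabla_{\bx'}k(\bx,\bx')+s_p(\bx')k(\bx,\bx')$, one reads off from \eqref{eq:langevin_stein_kernel} that $k_p(\bx,\bx')=(\mathcal{T}_p h_{\bx'})(\bx)$, hence $\E_{\bX\sim\bP}[k_p(\bX,\bx')]=0$ for every $\bx'\in\Rd$ under the standing regularity assumptions on $p$ (the usual Stein identity). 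Thus all $i\ne j$ terms vanish by independence and $\E[\KSD^2(\bP,\bP_m)]=\frac1m\E[k_p(\bX,\bX)]$. For the IMQ kernel a short computation gives $\nabla_{\bx'}k(\bx,\bx')|_{\bx'=\bx}=0$ and $\langle\nabla_\bx,\nabla_{\bx'}k(\bx,\bx')\rangle|_{\bx'=\bx}=2\beta d/\ell^2$, so $k_p(\bx,\bx)=2\beta d/\ell^2+\|s_p(\bx)\|_2^2$ and therefore $\E[\KSD^2(\bP,\bP_m)]=\frac1m\big(2\beta d/\ell^2+\E[\|s_p(\bX)\|_2^2]\big)$.

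\textbf{Step 2: bounding $k_p$ on $\bM$.} Next I would establish the elementary IMQ estimates, valid for all $\bx,\bx'\in\Rd$ with $u=\|\bx-\bx'\|_2^2/\ell^2$: namely $0<k(\bx,\bx')\le1$, $\|\nabla_{\bx'}k(\bx,\bx')\|_2\le\beta/\ell$ (both from $\sqrt u/(1+u)\le\tfrac12$), and $|\langle\nabla_\bx,\nabla_{\bx'}k(\bx,\bx')\rangle|\le2\beta d/\ell^2$. The last is the only mildly delicate one: the exact value $\tfrac{2\beta}{\ell^2}(1+u)^{-\beta-1}\big(d-\tfrac{2(\beta+1)u}{1+u}\big)$ changes sign, so I split into the case where the bracket is nonnegative (then it is at most $\tfrac{2\beta d}{\ell^2}(1+u)^{-\beta-1}\le\tfrac{2\beta d}{\ell^2}$) and the case where it is negative, where its absolute value is at most $\tfrac{2\beta}{\ell^2}\cdot\tfrac{2(\beta+1)u}{(1+u)^{\beta+2}}\le\tfrac{\beta(\beta+1)}{\ell^2}<\tfrac{2\beta d}{\ell^2}$ using $u/(1+u)^2\le\tfrac14$ and $\beta+1<2\le2d$. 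Plugging these into \eqref{eq:langevin_stein_kernel} and using $\|s_p(\bx_i)\|_2\le s_0$ yields $k_p(\bx_i,\bx_j)\le 2\beta d/\ell^2+2\beta s_0/\ell+s_0^2=:B$ for all $i,j$, with the sharper diagonal bound $k_p(\bx_i,\bx_i)\le 2\beta d/\ell^2+s_0^2=:A$.

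\textbf{Step 3: conclusion and the hard part.} Splitting $\KSD^2(\bP,\bQ_m)=\frac1{m^2}\big[\sum_i k_p(\bx_i,\bx_i)+\sum_{i\ne j}k_p(\bx_i,\bx_j)\big]\le\frac1{m^2}\big[mA+m(m-1)B\big]=\frac{A+(m-1)B}{m}$ and comparing with the exact value of $\E[\KSD^2(\bP,\bP_m)]$ from Step 1, the claimed strict inequality is equivalent to $A+(m-1)B<2\beta d/\ell^2+\E[\|s_p(\bX)\|_2^2]$, i.e. to $(m-1)B<\E[\|s_p(\bX)\|_2^2]-s_0^2$, which is precisely the hypothesis $m<1+(\E[\|s_p(\bX)\|_2^2]-s_0^2)/B$. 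The main obstacle is therefore not conceptual: it is the careful bookkeeping of the IMQ derivative bounds — chiefly the sign-changing mixed second-derivative term in Step 2 — together with verifying that $k_p(\cdot,\bx')$ is a genuine Stein function so that the off-diagonal part of the reference expectation vanishes; the rest is routine algebra.
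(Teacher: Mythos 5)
Your proposal is correct and follows essentially the same three-step route as the paper's proof: evaluate $\E[\KSD^2(\bP,\bP_m)]=\frac{1}{m}\big(2\beta d/\ell^2+\E[\|s_p(\bX)\|_2^2]\big)$ via the Stein identity for off-diagonal terms, bound $k_p(\bx_i,\bx_j)\le 2\beta d/\ell^2+2\beta s_0/\ell+s_0^2$ on $\mathcal{M}_{s_0}$, and compare. The only difference is in the bounding lemma: the paper writes the IMQ Stein kernel in a form where the mixed second-derivative contribution yields an explicitly non-positive $-4\beta(\beta+1)\|\bx-\by\|_2^2\ell^{-4}(1+u)^{-\beta-2}$ term that is simply dropped, which avoids your sign-splitting case analysis on $\langle\nabla_\bx,\nabla_{\bx'}k\rangle$; both routes yield the same constant.
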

\begin{corollary}[Low KSD samples at density minimum] \label{corollary_saddle}
    Let $k_p$ be the Stein kernel associated with the IMQ kernel with $\ell > 0$, $\smash{\beta \in (0,1)}$, and $c = 1$.
    Let $p$ be a density with at least one local minimum or saddle point. For $\smash{m \geq 2}$, if $\smash{\{\bx_i\}_{i=1}^m \subset \Rd}$ is a set of points, all located at local minimum or saddle points of $p$, then we have
    $\smash{\KSD^2\big(\bP, \bQ_m\big) < \E[\KSD^2\big(\bP, \bP_m\big)]}$, if $\smash{m < 1 + \frac{\ell^2}{2\beta d} \E[\|s_p(\bX)\|_2^2]}$.
\end{corollary}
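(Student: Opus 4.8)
The plan is to derive Corollary \ref{corollary_saddle} as a direct specialization of Theorem \ref{thm_fly} to the score threshold $s_0 = 0$, the only real work being the observation that stationary points of $p$ lie exactly in $\mathcal{M}_0$.

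First I would recall that $\smash{p \in \mathcal{C}^1(\Rd)}$ and that the score is $\smash{s_p(\bx) = \nabla \log p(\bx) = \nabla p(\bx)/p(\bx)}$ wherever $p(\bx) > 0$. At any local minimum or saddle point $\smash{\bx^\star}$ of $p$ in the interior of the support (where the score is well defined, i.e. $p(\bx^\star) > 0$), the first-order optimality/criticality condition gives $\smash{\nabla p(\bx^\star) = \mathbf{0}}$, hence $\smash{s_p(\bx^\star) = \mathbf{0}}$ and $\smash{\|s_p(\bx^\star)\|_2 = 0}$. Consequently, if $\smash{\{\bx_i\}_{i=1}^m}$ are all located at such points, then $\smash{\|s_p(\bx_i)\|_2 \leq 0}$ for every $i$, so $\smash{\{\bx_i\}_{i=1}^m \subset \mathcal{M}_0 = \{\bx \in \Rd : \|s_p(\bx)\|_2 \leq 0\}}$, which is the hypothesis of Theorem \ref{thm_fly} with $s_0 = 0$.

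Next I would simply invoke Theorem \ref{thm_fly} with $s_0 = 0$: its conclusion $\smash{\KSD^2(\bP, \bQ_m) < \E[\KSD^2(\bP, \bP_m)]}$ holds provided $\smash{m < 1 + (\E[\|s_p(\bX)\|_2^2] - s_0^2)/(2\beta d/\ell^2 + 2\beta s_0/\ell + s_0^2)}$. Setting $s_0 = 0$ collapses the numerator to $\smash{\E[\|s_p(\bX)\|_2^2]}$ and the denominator to $\smash{2\beta d/\ell^2}$, so the condition becomes $\smash{m < 1 + \frac{\ell^2}{2\beta d}\E[\|s_p(\bX)\|_2^2]}$, which is precisely the stated hypothesis. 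Since $p$ is assumed to have at least one local minimum or saddle point, the configuration is nonvacuous, and the conclusion follows.

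I do not anticipate a genuine obstacle here; the statement is a corollary by design. The only point requiring a word of care is the implicit assumption that the stationary points considered lie in the region where the score function is defined (equivalently, where $p$ is positive and differentiable), so that $\nabla p = \mathbf{0}$ indeed translates into $s_p = \mathbf{0}$; this is harmless since $\KSD$ and the Stein kernel $k_p$ are themselves only meaningful on that region. Everything else is the substitution $s_0 = 0$ into the inequality already established in Theorem \ref{thm_fly}.
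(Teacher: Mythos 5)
Your proposal is correct and matches the paper's proof exactly: both reduce the corollary to Theorem~\ref{thm_fly} by noting that stationary points of $p$ satisfy $s_p = \mathbf{0}$, so the given set lies in $\mathcal{M}_0$, and then substitute $s_0 = 0$ into the sample-size condition. The extra sentence you add about the score being defined (i.e.\ $p(\bx^\star) > 0$) is a reasonable clarification but not a departure from the paper's argument.
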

\begin{figure}
    \centering
    \includegraphics[scale = 0.27]{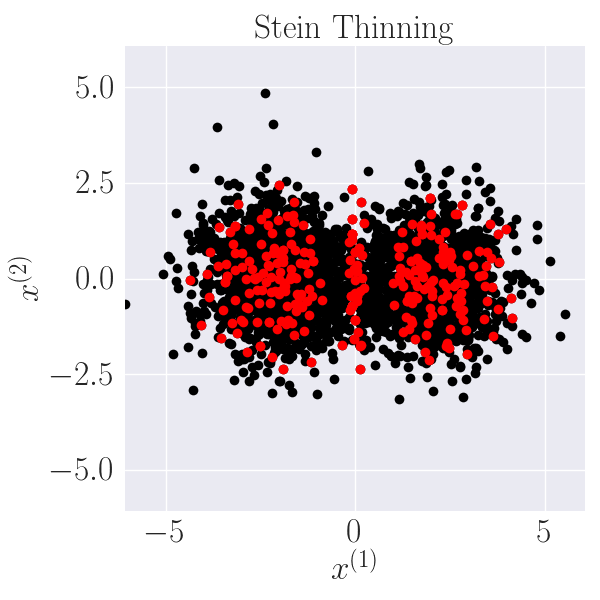} \hspace*{5mm}
    \includegraphics[scale = 0.3]{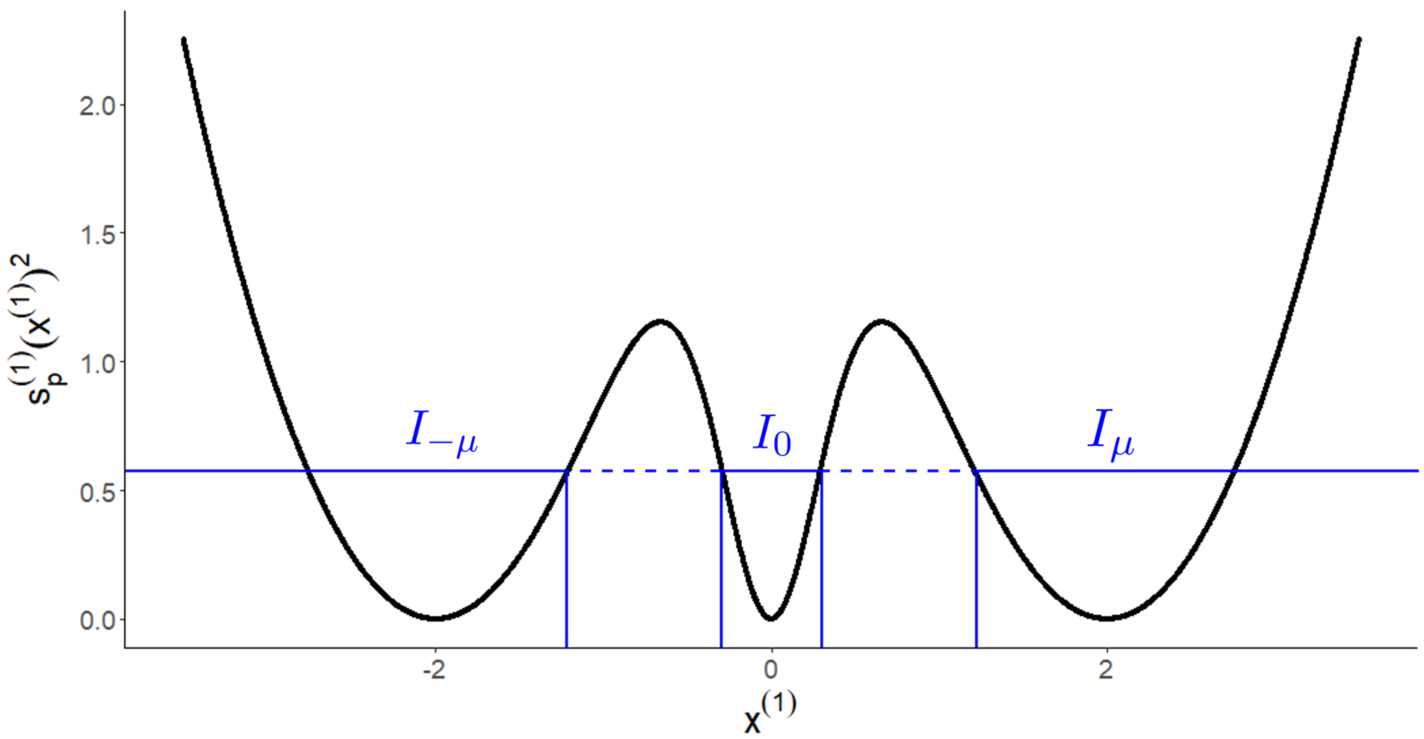}  
    \caption{Illustration of Pathology II for the Gaussian mixture of Example \ref{example_mixture} ($d = 2$, $\mu = 2$, $\sigma = 1$, $w = 0.5$, $n = 3000$, $m = 300$): many particles are selected around the line $\smash{x^{(1)} = 0}$ (left panel), because of the squared first component of the score $s_p(\bx)$ along $\smash{x^{(1)}}$ (for $\smash{x^{(2)} = 0}$ in the right panel).}
    \label{fig:pathology_II_illustrations}
\end{figure}
The proofs of Theorem \ref{thm_fly} and Corollary \ref{corollary_saddle}, reported in Appendix \ref{app:ksd_pathologies}, are built on the idea that the KSD of the empirical law of $\{\bx_i\}_{i=1}^n$, has a bias of the form $\sum_{i=1}^m \|s_p(\bx_i)\|_2^2/m^2$. Consequently, when $m$ is small, the bias has a strong influence on KSD estimates, which favor samples concentrated in regions of low score norm, as stationary points of $p$. 
This mechanism is illustrated in Figure \ref{fig:pathology_II_illustrations} and Corollary \ref{corollary_fly} for Gaussian mixtures. In this case, Stein thinning aligns a large number of particles around the line of saddle points defined by $\smash{x^{(1)} = 0}$, an area of low probability of the targeted mixture distribution, because of the variations of the score function, if the sample size $m$ is small enough.
From another perspective, for any sample size $m$, it exists a Gaussian mixture with $\mu/\sigma$ large enough, such that Pathology II occurs. Therefore, Pathology II can appear for arbitrarily large samples $m$, depending on the target distribution properties.
\begin{corollary}[KSD spurious minimum for Gaussian mixtures] \label{corollary_fly}
    Let $k_p$ be the Stein kernel associated with the IMQ kernel with $\smash{\ell > 0}$, $\smash{\beta \in (0,1)}$, and $c = 1$.
    Let the density $p$ be a Gaussian mixture model of two components with equal weights, respectively centered in $\smash{(-\mu,\mathbf{0}_{d-1})}$ and $\smash{(\mu, \mathbf{0}_{d-1})}$, of variance $\sigma^2 \mathbf{I_d}$, and let $\nu = \mu/\sigma$. If $\nu > 1$ and $\smash{0 \leq s_0 < \big[\nu \sqrt{\nu^2 - 1} -\ln(\nu + \sqrt{\nu^2 - 1})\big]/\mu}$, then for any $\smash{\{\bx_i\}_{i=1}^m \subset \mathcal{M}_{s_0}}$ of empirical measure $\bQ_m$, we have \\    
    (i) $\KSD^2\big(\bP, \bQ_m\big) < \E[\KSD^2\big(\bP, \bP_m\big)]$ if $m$ and $s_0$ satisfy $m < 1 + \frac{\E[\|s_p(\bX)\|_2^2] - s_0^2}{2\beta d /\ell^2 + 2\beta s_0/\ell + s_0^2}$, \\    
    (ii) there exists three disjoint intervals $I_{-\mu}, I_{0}, I_{\mu} \subset \mathbb{R}$, respectively centered around $-\mu$, $0$, and $\mu$, such that $\smash{x_1^{(1)}, \hdots, x_m^{(1)} \in I_{-\mu} \cup I_{0} \cup I_{\mu}}$.
\end{corollary}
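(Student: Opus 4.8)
The plan is to handle the two items separately. Item (i) needs no new argument: its hypotheses are exactly those of Theorem~\ref{thm_fly} (the IMQ kernel with $c=1$, a fixed set $\{\bx_i\}_{i=1}^m \subset \bM$ with $m \ge 2$, and the same upper bound on $m$), so the conclusion $\KSD^2(\bP,\bQ_m) < \E[\KSD^2(\bP,\bP_m)]$ follows by direct specialization; one only remarks in passing that $\bM$ is non-empty for every $s_0 \ge 0$, since the symmetry centre $(0,\mathbf{0}_{d-1})$ is a stationary point of $p$ and hence lies in $\bM$. All the real work is in item (ii), and the key reduction is that the mixture score factorizes coordinate-wise: writing $\nu = \mu/\sigma$, one has $s_p(\bx) = \big(s_1(x^{(1)}), -x^{(2)}/\sigma^2, \ldots, -x^{(d)}/\sigma^2\big)$ with the one-dimensional mixture score $s_1(x) = \sigma^{-2}\big(-x + \mu\tanh(\mu x/\sigma^2)\big)$. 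Since $\|s_p(\bx)\|_2 \ge |s_1(x^{(1)})|$, the event $\bx \in \bM$ forces $|s_1(x^{(1)})| \le s_0$, so it suffices to show that $S := \{x \in \mathbb{R} : |s_1(x)| \le s_0\}$ is a disjoint union of three intervals $I_{-\mu}, I_0, I_\mu$ located around $-\mu, 0, \mu$; then $x_i^{(1)} \in S$ for all $i$ gives the claim.

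The next step is a one-dimensional study of the odd function $s_1$. From $s_1'(x) = \sigma^{-2}\big(\nu^2 \operatorname{sech}^2(\mu x/\sigma^2) - 1\big)$, which equals $(\nu^2-1)/\sigma^2 > 0$ at the origin (this is where $\nu > 1$ enters) and is strictly decreasing in $|x|$, the derivative $s_1'$ has a unique positive zero $x_1$ solving $\cosh(\mu x_1/\sigma^2) = \nu$, i.e. $x_1 = (\sigma^2/\mu)\ln(\nu + \sqrt{\nu^2-1})$. Hence $s_1$ increases on $(0,x_1)$ from $0$ to its maximal value on $(0,\infty)$ and then decreases strictly to $-\infty$, crossing $0$ once more at the right-mode stationary point $x^\star > x_1$ solving $x = \mu\tanh(\mu x/\sigma^2)$ (which tends to $\mu$ as $\nu \to \infty$); by oddness the picture on $(-\infty,0)$ is the mirror image. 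A short computation using $\tanh(\operatorname{arccosh}\nu) = \sqrt{\nu^2-1}/\nu$ and $\mu = \sigma\nu$ identifies the barrier height as $s_1(x_1) = \big(\nu\sqrt{\nu^2-1} - \ln(\nu+\sqrt{\nu^2-1})\big)/\mu$, which is positive for $\nu > 1$ (equivalently $\tfrac12\sinh(2t) > t$ with $t = \operatorname{arccosh}\nu$), and the hypothesis on $s_0$ is precisely $s_0 < s_1(x_1)$.

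It then remains to read off the structure of $S$. On $(0,x_1)$ the function $s_1$ is nonnegative and rises to $s_1(x_1) > s_0$, so $S \cap (0,x_1] = (0,a]$ with $s_1(a) = s_0$; together with its reflection this yields the symmetric interval $I_0 = [-a,a]$. On $(x_1,\infty)$, $s_1$ decreases strictly from $s_1(x_1) > s_0$ to $-\infty$, hence meets each of the levels $\pm s_0$ exactly once, producing a single bounded interval $I_\mu = [a',a'']$ with $x^\star \in (a',a'')$, and $I_{-\mu} = -I_\mu$ by oddness. Since $a < x_1 < a'$, the three intervals are pairwise disjoint, $I_0$ is symmetric about $0$, and $I_{\pm\mu}$ surround the stationary points near $\pm\mu$, which gives item (ii). The main obstacle is entirely this one-dimensional analysis — proving $s_1'$ has a single positive root, evaluating $s_1(x_1)$ and matching it to the stated closed form, and tracking the monotonicity branches carefully enough to certify exactly three connected components of $S$ — everything else (item (i), the coordinate-wise reduction, oddness) being routine; the degenerate case $s_0 = 0$, where $I_0 = \{0\}$ and $I_{\pm\mu} = \{\pm x^\star\}$ collapse to the stationary points of $p$, should be noted as consistent with Corollary~\ref{corollary_saddle}.
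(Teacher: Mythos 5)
Your proof is correct and takes essentially the same route as the paper: item (i) is read off Theorem~\ref{thm_fly}, and item (ii) is a one-dimensional analysis of the first score component, locating the critical point at $x_1 = (\sigma^2/\mu)\operatorname{arcosh}(\nu)$ and evaluating the barrier height as $\big[\nu\sqrt{\nu^2-1}-\ln(\nu+\sqrt{\nu^2-1})\big]/\mu$. The only cosmetic difference is that you study the signed function $s_1$ directly (exploiting its oddness and monotonicity branches), whereas the paper analyzes the squared quantity $s_p^{(1)}(z)^2$ and locates its local maxima and minima; the computations and the resulting three-interval structure are identical.
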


\section{Regularized Stein Thinning} \label{sec:regularized_thinning}

Stein thinning suffers from two main pathologies, analyzed in Section \ref{sec:pathologies}. In a word, Pathology I comes from the insensitivity of the score to the relative weights of distant modes, whereas Pathology II originates from the variations of the score norm, which do not differentiate local minimum from local maximum of the target distribution. We propose to regularize the KSD distance to fix these two problems, using terms that are highly sensitive to the type of stationary point and the relative weights of modes.
The proposed algorithm is first introduced in Subsection \ref{sec:regularization}, then theoretical properties are discussed in Subsection \ref{sec:convergence}, and finally the good empirical performance will be shown in Section \ref{sec:experiments}.

\subsection{Algorithm} \label{sec:regularization}

\paragraph{Entropic regularization.}
In order to compensate the blindness of the KSD to mode proportions in multimodal distributions, we introduce the following entropic regularized KSD, denoted by $\KSD_\lambda$, and defined as $\KSD^2_{\lambda}(\bP, \bQ) = \E[k_p(\bZ, \bZ')] - \lambda \E[\log(p(\bZ))]$, where $\bZ$ and $\bZ'$ have probability law $\bQ$, and $\bP$ admits the density $p$. In our Bayesian setting, $\E[\log(p(\bZ))]$ is known up to an additive constant since the normalization factor of $p$ is not tractable. However, it is possible to use $\smash{\KSD^2_{\lambda}(\bP, \bQ)}$ as the objective function of the Stein thinning algorithm, as the greedy selection of particles to optimize this quantity does not rely on the unknown additive constant. The main idea of this entropic regularization is that $-\log(p(\bx))$ takes higher values in modes of smaller probability, and therefore provides the relative mode weight information, which is missing in the KSD distance. More precisely, modes with smaller weights take smaller density values, and are therefore more penalized than modes of higher weights. Therefore, with such entropic penalization, regularized Stein thinning tends to select particles in modes of higher weights more frequently than in modes of smaller weights, and we recover appropriate proportions.

\paragraph{Laplacian correction.}
\citet{chen2018stein} and \citet{riabiz2020optimal} have noticed that the term $k_p(\bx_i, \bx_i)$, which naturally appears in the empirical kernelized Stein discrepancy with the Langevin operator, can be interpreted as a regularization term. For example, Stein thinning does not select particles in the burn-in period of an MCMC output thanks to this regularization. However, this term $k_p(\bx_i, \bx_i)$ is also responsible for Pathology II, of samples concentrated in stationary points of $p$, as shown in Theorem \ref{thm_fly}. Therefore, we add a second regularization term to compensate the weaknesses of $k_p(\bx_i, \bx_i)$, by penalizing particles located at local minimum and saddle points of the density $p$. Such points are located in areas of convexity of the target distribution, which can thus be detected with the positive values of the Laplacian of the density. Therefore, using the truncated Laplacian operator 
$\smash{\Delta^{+} f(\bx) = \sum_{j=1}^d \left(\partial^2 f(\bx) / \partial x^{(j) 2}\right)^{+}}$
for a function $f \in \mathcal{C}^2(\Rd)$, we propose the  $\LKSD$ estimate with a Laplacian correction for densities $p \in \mathcal{C}^2(\Rd)$, defined by \vspace*{-1mm}
\begin{align*}
    \LKSD^2(\bP, \bQ_m) &= \frac{1}{m^2} \sum_{i \neq j}^{m} k_p(\bx_i, \bx_j)
    + \frac{1}{m^2} \sum_{i=1}^{m} \big[k_p(\bx_i, \bx_i) + \Delta^{+} \log(p(\bx_i))\big].
\end{align*}

\paragraph{Regularized Stein thinning.}
Overall, we obtain the following estimate for the entropic regularized KSD with Laplacian correction $\LKSD_{\lambda}^2(\bP, \bQ_m) = \LKSD^2(\bP, \bQ_m) - \frac{\lambda}{m} \sum_{i=1}^{m} \log(p(\bx_i))$.
Then, at each iteration $t \in \{1,\hdots,m\}$, the regularized Stein thinning selects the particle index $\pi_t \in \{1,\dots,n\}$ to greedily minimize \vspace*{-1mm}
\begin{align*}
  k_p(\bx_{\pi_t},\bx_{\pi_t}) + \Delta^{+} &\log(p(\bx_{\pi_t})) - \lambda t \log(p(\bx_{\pi_t})) + 2\sum_{j=1}^{t-1} k_p(\bx_{\pi_j}, \bx_{\pi_t}).
\end{align*}
Finally, Figure \ref{fig:regularized_pathologies} illustrates the performance of regularized Stein thinning to fix the two pathologies analyzed in Section \ref{sec:pathologies}, in the case of Example \ref{example_mixture} with Gaussian mixtures. 
Indeed, the top panel of Figure \ref{fig:regularized_pathologies} shows that the majority of particles are selected in the right mode, as expected from the target distribution with $w = 0.2$. More precisely, an average proportion of $0.11$ of the particles are located in the left mode over $100$ repetitions of the procedure ($0.89$ in the right mode), with a standard deviation of $0.03$. For the value choice of $\lambda$, we refer to the next subsection and the experimental Section \ref{sec:experiments}. On the bottom panel of Figure \ref{fig:regularized_pathologies}, we observe that no particle is now selected on the line $\smash{x^{(1)} = 0}$, as expected from the target Gaussian mixture distribution.
\begin{remark}
    The truncated Laplacian operator is simply given by the trace of the Hessian matrix, where negative components are set to $0$. It follows that the computational cost the regularized algorithm is similar to the original Stein thinning.
\end{remark}
\begin{remark}
    The Laplacian correction of $k_p$ introduces second-order derivatives of $p$ in the Stein discrepancy, and therefore enables to differentiate local minimum and saddle points of the density $p$ from its local maximum. A natural approach to introduce second-order derivatives of $p$ in KSD estimates, is to define the Stein discrepancy using second-order operators. A Laplacian Stein operator \citep{oates2017posterior} is derived in Appendix \ref{app:laplacian_stein_operator}, but experiments show that this strategy is not efficient to fix Pathologies I \& II.
\end{remark}

\begin{figure}[h]
  \centering
    \includegraphics[width=0.3\textwidth]{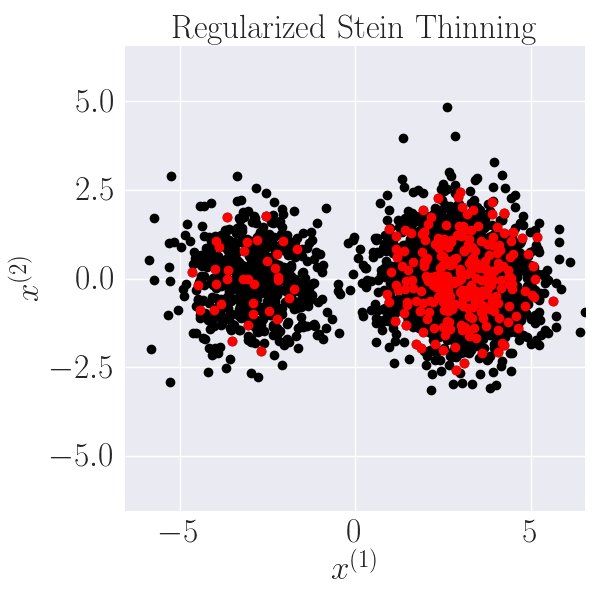}
    \includegraphics[width=0.3\textwidth]{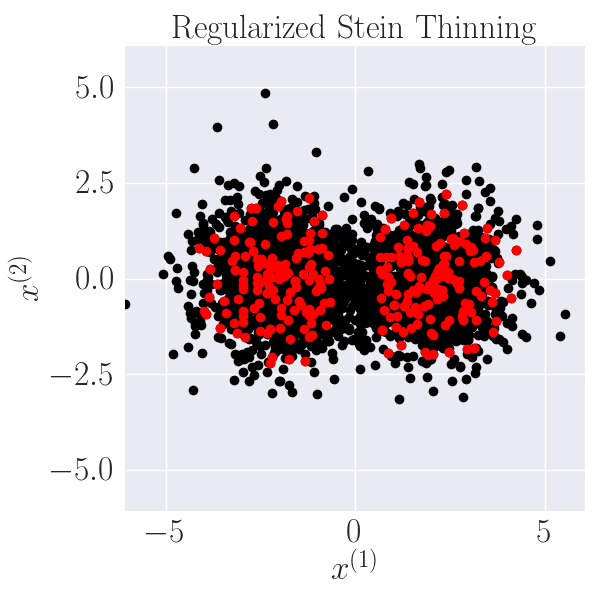}
  \caption{Pathologies fixed by the regularized Stein thinning.}
\label{fig:regularized_pathologies}
\end{figure}

\subsection{Theoretical properties} \label{sec:convergence}

This subsection is dedicated to the theoretical analysis of regularized Stein thinning. First, we show that the proposed algorithm now enjoys good properties regarding Pathologies I and II, and thus mitigates the identified problems of the original Stein thinning. Secondly, we extend the convergence analysis of \citet{riabiz2020optimal} for the post-treatment of MCMC output, to show the convergence of the empirical law output by regularized Stein thinning towards the target probability measure.

\paragraph{Entropic regularization.}
In the previous section, Theorem \ref{thm_modes} highlights how Pathology I of mode proportion blindness originates from the score insensitivity to mode weights.
On the other hand, the entropic regularization is directly built on the target density, and therefore strongly depends on the mode weights. 
In the same setting of Assumption \ref{def_bimodal}, required for Pathology I to occur with the original algorithm, the following Theorem \ref{thm_entropy} shows that the entropic regularized KSD is minimized for the appropriate target weight, with the suitable regularization strength $\lambda$. Notice that Theorem \ref{thm_entropy}, proved in Appendix \ref{app:thm_entropy}, is valid if $\E[\log(p(\bZ_L))] \neq \E[\log(p(\bZ_R))]$ with $\bZ_L \sim \bQ_L$ and $\bZ_R \sim \bQ_R$, otherwise the impact of the entropic regularization on $w_{\lambda}^{\star}$ vanishes. However, as $w_p \neq 1/2$ is required in Assumption \ref{def_bimodal} for Pathology I to occur, $p$ is asymmetric, and $\E[\log(p(\bZ_L))] = \E[\log(p(\bZ_R))]$ is only possible in very specific cases. Theorem \ref{thm_entropy} clearly shows that the regularized entropic KSD is sensitive to the weights of distant modes. Efficient strategies to choose the regularization strength will be first discussed in the asymptotic analysis below, and then in the experiments of the next section. 
\begin{theorem} \label{thm_entropy}
    Let $k_p$ be the Stein kernel associated with the radial kernel $k(\bx, \bx') = \phi(\|\bx-\bx'\|_2/\ell)$, where $\smash{\bx, \bx' \in \Rd}$, $\ell > 0$, and $\smash{\phi \in \mathcal{C}^2(\mathbb{R})}$.
    Let $p$ and $q$ be two bimodal mixture distributions satisfying Assumption \ref{def_bimodal}. We define $w_{\lambda}^{\star}$ as the optimal mixture weight of $q$ with respect to the entropic regularized KSD distance, i.e., $w_{\lambda}^{\star} = \underset{{w \in [0,1]}}{\mathrm{argmin}} \: \KSD_{\lambda}(\bP, \bQ_w)$.
    If $\E[\log(p(\bZ_L))] \neq \E[\log(p(\bZ_R))]$ where $\bZ_L \sim \bQ_L$ and $\bZ_R \sim \bQ_R$, it exists $\lambda \in \mathbb{R}$ such that $w_{\lambda}^{\star} = w_p$.
\end{theorem}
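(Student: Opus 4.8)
The plan is to show that along the one-parameter family $\bQ_w$ the squared objective $w\mapsto \KSD_\lambda^2(\bP,\bQ_w)$ (the quantity actually used as greedy criterion, and whose minimiser coincides with that of $\KSD_\lambda(\bP,\bQ_w)$) is a strictly convex quadratic in $w$ whose vertex can be translated to any prescribed location by tuning $\lambda$, because the entropic correction is affine in $w$ with nonzero slope. First I would expand both pieces using the mixture structure. Since $\bQ_w=w\bQ_L+(1-w)\bQ_R$ as measures, linearity of the kernel mean embedding $\mu_{\bQ}:=\E_{\bZ\sim\bQ}[k_p(\bZ,\cdot)]\in\mathcal{H}(k_p)$ gives $\mu_{\bQ_w}=w\,\mu_{\bQ_L}+(1-w)\,\mu_{\bQ_R}$, and the closed form $\KSD^2(\bP,\bQ)=\E_{\bZ,\bZ'\sim\bQ}[k_p(\bZ,\bZ')]=\|\mu_{\bQ}\|_{\mathcal{H}(k_p)}^2$ yields
\[
h(w):=\KSD^2(\bP,\bQ_w)=w^2\|\mu_{\bQ_L}\|^2+2w(1-w)\langle\mu_{\bQ_L},\mu_{\bQ_R}\rangle+(1-w)^2\|\mu_{\bQ_R}\|^2,
\]
a parabola with leading coefficient $\|\mu_{\bQ_L}-\mu_{\bQ_R}\|^2\ge 0$. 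Writing $L:=\E_{\bZ_L\sim\bQ_L}[\log p(\bZ_L)]$ and $R:=\E_{\bZ_R\sim\bQ_R}[\log p(\bZ_R)]$, the mixture structure also gives $\E_{\bZ\sim\bQ_w}[\log p(\bZ)]=R+w(L-R)$, hence $\KSD_\lambda^2(\bP,\bQ_w)=h(w)-\lambda\big(R+w(L-R)\big)$.

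Next I would establish strict convexity: the supports of $\bQ_L$ and $\bQ_R$ lie in disjoint balls (radius $r<\mu$ around the two mode centres), so $\bQ_L\neq\bQ_R$ and therefore $\mu_{\bQ_L}\neq\mu_{\bQ_R}$, making the leading coefficient of $h$ strictly positive; subtracting an affine term preserves strict convexity, so for every $\lambda$ the map $w\mapsto\KSD_\lambda^2(\bP,\bQ_w)$ has a unique minimiser $w_\lambda^\star$ on $[0,1]$. Differentiating, $\tfrac{d}{dw}\KSD_\lambda^2(\bP,\bQ_w)=h'(w)-\lambda(L-R)$; since $L\neq R$ by hypothesis, I can set $\lambda^\star:=h'(w_p)/(L-R)$, which makes $w_p$ a stationary point of the strictly convex function $w\mapsto\KSD_{\lambda^\star}^2(\bP,\bQ_w)$, hence its unique unconstrained minimiser. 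Because $w_p\in[0,1]$ (it is a mode weight of $p$), this critical point is also the minimiser over $[0,1]$, so $w_{\lambda^\star}^\star=w_p$, which is the claim.

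The only delicate step is the strict-convexity claim, i.e. that distinct mode laws force $\mu_{\bQ_L}\neq\mu_{\bQ_R}$ (equivalently $\MMD_{k_p}(\bQ_L,\bQ_R)>0$): this is where I expect the main work, since it needs an injectivity property of the Stein-kernel mean embedding on compactly supported measures with disjoint supports rather than mere positive semi-definiteness of $k_p$. If one is unwilling to assume $k_p$ is characteristic, the degenerate case $\mu_{\bQ_L}=\mu_{\bQ_R}$ (where $h$ is constant and $w_\lambda^\star$ is not even well-defined) must be excluded separately, e.g. from the disjointness of supports together with continuity and the explicit form of $k_p$ built from a $\mathcal{C}^2$ radial kernel. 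Everything else reduces to the elementary calculus of minimising a one-dimensional convex quadratic plus a linear term and checking that the resulting critical point lies in $[0,1]$.
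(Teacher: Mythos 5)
Your argument is essentially the paper's proof, just rewritten in a cleaner coordinate-free language. The paper expands $\KSD^2(\bP,\bQ_w)$ directly by integrating the mixture density $q = w q_L + (1-w) q_R$ against $k_p$, obtaining the same quadratic in $w$ with cross term $\Delta_{L,R}=\int k_p(\bx,\bx')q_L(\bx)q_R(\bx')\,d\bx\,d\bx'$, then notes that $\E[\log p(\bZ_w)]$ is affine in $w$, and finally solves the first-order condition for $\lambda$; the formula it produces,
$\lambda = 2\bigl[w_p\KSD^2(\bP,\bQ_L) - (1-w_p)\KSD^2(\bP,\bQ_R) + (1-2w_p)\Delta_{L,R}\bigr]/\bigl(\E[\log p(\bZ_L)]-\E[\log p(\bZ_R)]\bigr)$,
is exactly your $h'(w_p)/(L-R)$ after translating $\|\mu_{\bQ_L}\|^2,\|\mu_{\bQ_R}\|^2,\langle\mu_{\bQ_L},\mu_{\bQ_R}\rangle$ into $\KSD^2(\bP,\bQ_L),\KSD^2(\bP,\bQ_R),\Delta_{L,R}$. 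What your mean-embedding phrasing buys is an immediate identification of the leading coefficient as $\|\mu_{\bQ_L}-\mu_{\bQ_R}\|^2$, which makes its nonnegativity transparent; the paper instead asserts (with a small slip, writing it as $\KSD^2(\bP,\bQ_{1/2})/4$, which actually equals $\|\mu_{\bQ_L}+\mu_{\bQ_R}\|^2/16$) that the coefficient is positive and moves on. The one caveat you raise--that strict positivity requires $\mu_{\bQ_L}\neq\mu_{\bQ_R}$, i.e.\ nondegeneracy of the Stein-kernel mean embedding on the two mode laws--is a real gap that the paper does not address either; disjoint compact supports alone do not formally exclude $\mu_{\bQ_L}=\mu_{\bQ_R}$ without some injectivity or nondegeneracy property of $k_p$. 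If you are comfortable treating that as a mild regularity hypothesis (or appealing to the generic nondegeneracy of the IMQ-based Stein kernel), the rest of your argument is complete and matches the paper's.
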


\paragraph{Laplacian correction.}
First, we stress that the $\LKSD$ is a strongly consistent estimate of the KSD distance, where the proof follows from the law of large numbers. Therefore, the Laplacian correction introduced in the $\LKSD$ estimate does not undermine the good asymptotic properties of the KSD distance.
Secondly, the following theorem shows that samples concentrated in local minimum or saddle points of the target distribution and of low density values, are well identified by the $\LKSD$ as samples of worse quality than those truly sampled from the target. Consequently, the Laplacian correction fixes Pathology II, previously formalized in Theorem \ref{thm_fly}.
\begin{theorem} \label{thm_laplacian}
    Let $k_p$ be the Stein kernel associated with the IMQ kernel with $\ell > 0$, $\beta \in (0,1)$, and $c = 1$.
    For $m \geq 2$, let $\smash{\{\bx_i\}_{i=1}^m \subset \Rd}$ be a set of points located at $\bx_0$, a local minimum or saddle point of $p$, and of empirical measure $\bQ_m$. Then, we have $\smash{\LKSD^2\big(\bP, \bQ_m\big) > \E[\LKSD^2\big(\bP, \bP_m\big)]}$, if the density at $\bx_0$ satisfies $p(\bx_0) < \Delta^+ p(\bx_0) / \big(\E[\|s_p(\bX)\|_2^2] + \E[\Delta^+ \log p(\bX)]\big)$. 
\end{theorem}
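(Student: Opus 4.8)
The plan is to evaluate both sides of the inequality in closed form for this degenerate sample and reduce the claim to the stated scalar condition on $p(\bx_0)$, reusing the kernel-derivative computations already needed for Theorem~\ref{thm_fly}. First I would compute the left-hand side $\LKSD^2(\bP,\bQ_m)$. Since all $m$ particles coincide at $\bx_0$, every kernel evaluation equals $k_p(\bx_0,\bx_0)$. Because $\bx_0$ is a local minimum or saddle point, $\nabla p(\bx_0)=\mathbf{0}$ and hence $s_p(\bx_0)=\nabla\log p(\bx_0)=\mathbf{0}$, so the three score-dependent terms in the Langevin Stein kernel vanish at $(\bx_0,\bx_0)$ and $k_p(\bx_0,\bx_0)=\langle\nabla_\bx,\nabla_{\bx'}k(\bx,\bx')\rangle\big|_{\bx=\bx'=\bx_0}$. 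For the IMQ kernel with $c=1$ a direct differentiation gives $\langle\nabla_\bx,\nabla_{\bx'}k\rangle\big|_{\bx=\bx'}=2\beta d/\ell^2$. Collecting the $m(m-1)$ off-diagonal terms and the $m$ diagonal terms of $\LKSD^2$ then yields $\LKSD^2(\bP,\bQ_m)=2\beta d/\ell^2+\tfrac1m\Delta^{+}\log p(\bx_0)$.

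Next I would compute the right-hand side $\E[\LKSD^2(\bP,\bP_m)]$. For independent $\bX,\bX'\sim\bP$, Stein's identity gives $\E[k_p(\bX,\bX')]=\KSD^2(\bP,\bP)=0$, so the off-diagonal sum contributes nothing in expectation. For the diagonal, the same IMQ evaluations ($k(\bx,\bx)=1$, $\nabla_{\bx'}k(\bx,\bx')\big|_{\bx=\bx'}=\mathbf{0}$) give $k_p(\bx,\bx)=2\beta d/\ell^2+\|s_p(\bx)\|_2^2$, hence $\E[\LKSD^2(\bP,\bP_m)]=\tfrac1m\big(2\beta d/\ell^2+\E[\|s_p(\bX)\|_2^2]+\E[\Delta^{+}\log p(\bX)]\big)$. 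Subtracting, $\LKSD^2(\bP,\bQ_m)-\E[\LKSD^2(\bP,\bP_m)]=\tfrac{2\beta d(m-1)}{m\ell^2}+\tfrac1m\big(\Delta^{+}\log p(\bx_0)-\E[\|s_p(\bX)\|_2^2]-\E[\Delta^{+}\log p(\bX)]\big)$.

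The final step is to show the stated hypothesis forces this difference to be positive. Since $m\ge 2$ the first term is nonnegative, so it suffices that $\Delta^{+}\log p(\bx_0)>\E[\|s_p(\bX)\|_2^2]+\E[\Delta^{+}\log p(\bX)]$. To match the statement I would use that at the stationary point $\partial_j^2\log p(\bx_0)=\partial_j^2 p(\bx_0)/p(\bx_0)$ (the cross terms $(\partial_j p/p)^2$ vanish because $\nabla p(\bx_0)=\mathbf{0}$), and since $p(\bx_0)>0$ the coordinatewise positive part commutes with division by $p(\bx_0)$, so $\Delta^{+}\log p(\bx_0)=\Delta^{+}p(\bx_0)/p(\bx_0)$. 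Rearranging the displayed inequality then gives exactly $p(\bx_0)<\Delta^{+}p(\bx_0)/\big(\E[\|s_p(\bX)\|_2^2]+\E[\Delta^{+}\log p(\bX)]\big)$, as claimed.

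I do not expect a deep obstacle here: the argument is essentially bookkeeping on top of the derivative evaluations from Theorem~\ref{thm_fly}. The points needing care are (i) the vanishing of $s_p$ and of the first-order kernel derivatives at coincident/stationary points, which is what collapses $k_p(\bx_0,\bx_0)$ to the pure curvature term $2\beta d/\ell^2$, and (ii) the identity $\Delta^{+}\log p(\bx_0)=\Delta^{+}p(\bx_0)/p(\bx_0)$, which holds only because $\bx_0$ is stationary and the truncation acts coordinatewise; one should also remark that the stated condition is sufficient but not necessary, the slack being the discarded nonnegative term $2\beta d(m-1)/(m\ell^2)$, and implicitly that $\E[\|s_p(\bX)\|_2^2]+\E[\Delta^{+}\log p(\bX)]>0$ so the rearrangement is reversible.
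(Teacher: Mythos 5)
Your proof is correct and follows essentially the same route as the paper's: evaluate $\LKSD^2(\bP,\bQ_m)$ and $\E[\LKSD^2(\bP,\bP_m)]$ in closed form (using $s_p(\bx_0)=\mathbf{0}$ and $k_p(\bx_0,\bx_0)=2\beta d/\ell^2$ for the degenerate sample, and Stein's identity to kill the off-diagonal expectations), subtract, and apply the stationary-point identity $\Delta^{+}\log p(\bx_0)=\Delta^{+}p(\bx_0)/p(\bx_0)$ to recover the stated hypothesis. Your closing observations — that the discarded term $2\beta d(m-1)/(m\ell^2)$ makes the condition sufficient but not necessary, and that positivity of $\E[\|s_p(\bX)\|_2^2]+\E[\Delta^{+}\log p(\bX)]$ is needed for the final rearrangement — are sound and slightly more careful than the paper, but do not change the argument.
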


\paragraph{Convergence of regularized Stein thinning.}
While regularized Stein thinning fixes finite sample size pathologies, the asymptotic properties of Stein thinning are also preserved.
Indeed, if the initial set of particles is drawn from a different distribution than the target using a Markov chain Monte Carlo, Theorem \ref{thm_MCMC} states that the empirical measure of the sample obtained with regularized Stein Thinning, converges towards the target measure $\bP$, and thus extends the results of \citet{riabiz2020optimal}. Notice that the weak convergence of a sequence of probability measure is denoted by $\Rightarrow$, and that distantly dissipative distributions are defined in Definition \ref{def_dissipative}. The required assumption below, essentially states mild integrability conditions, and enforces that the MCMC output is not too far from a sample drawn from $p$---see Appendix \ref{app:bound_ksd} for additional details.

\begin{assumption} \label{assumption_MCMC}
    Let $\bQ$ be a probability distribution on $\Rd$, such that $\bP$ is absolutely continuous with respect to $\bQ$. Let $\{\bZ_i\}_{i \in \mathbb{N}} \subset \Rd$ be a $\bQ$-invariant, time-homogeneous Markov chain, generated using a $V$-uniformly ergodic transition kernel, such that $\smash{V(\bx) \geq \frac{d\bP}{d\bQ}\sqrt{2\beta d/\ell^2 + \|s_p(\bx)\|_2^2}}$. 
    Suppose that, for some $\gamma > 0$, $\underset{i \in \mathbb{N}}{\sup} \hspace*{1mm} \E[e^{\gamma |\log(p(\bZ_i))|}] < \infty$, $\underset{i \in \mathbb{N}}{\sup} \hspace*{1mm} \E[e^{\gamma \Delta^{+} \log p(\bZ_i)}] < \infty$, \vspace*{-3mm}
    \begin{align*}
        \underset{i \in \mathbb{N}}{\sup} \hspace*{1mm} \E\big[e^{\gamma \max (1, \frac{d\bP}{d\bQ}(\bZ_i)^2) (\frac{2\beta d}{\ell^2} + \|s_p(\bZ_i)\|_2^2)}\big] < \infty, \hspace*{1mm}  \underset{i \in \mathbb{N}}{\sup} \hspace*{1mm} \E\Big[\frac{d\bP}{d\bQ}(\bZ_i) \sqrt{\frac{2\beta d}{\ell^2} + \|s_p(\bZ_i)\|_2^2} V(\bZ_i) \Big] < \infty.       
    \end{align*}
\end{assumption}
    
\begin{theorem} \label{thm_MCMC}
    Let $\bP$ be a distantly dissipative probability measure, that admits the density $p \in \mathcal{C}^2(\Rd)$, $k_p$ be the Stein kernel associated with the IMQ kernel where $\ell, c > 0, \beta \in (0,1)$. Let $\{\bZ_i\}_{i \in \mathbb{N}} \subset \Rd$ be a Markov chain satisfying Assumption \ref{assumption_MCMC}, $\pi$ be the index sequence of length $m_n$ generated by regularized Stein thinning, and $\bQ_{m_n}$ be the empirical measure of $\{\bZ_{\pi_i}\}_{i = 1}^{m_n}$.
    If $\log(n)^{\alpha} < m_n < n$, with any $\alpha > 1$, and $\smash{\lambda_{m_n} = o(\log(m_n)/m_n)}$, then we have almost surely $\bQ_{m_n} \underset{n \to \infty} \Longrightarrow \bP$.
\end{theorem}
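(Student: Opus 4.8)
The plan is to extend the weak-convergence theorem for Stein thinning of \citet{riabiz2020optimal} to the regularized objective, by showing that along the greedy path the two extra terms, the Laplacian correction $\Delta^{+}\log p$ and the entropic term $-\lambda\log p$, contribute only asymptotically negligible perturbations, so that $\KSD^2(\bP,\bQ_{m_n})\to 0$ almost surely; weak convergence then follows because, for a distantly dissipative $\bP$ (Definition \ref{def_dissipative}) and the IMQ kernel, the KSD is convergence-determining \citep{gorham2017measuring}.

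First I would pass to the RKHS picture: writing $k_p(\bx,\bx')=\langle\xi_\bx,\xi_{\bx'}\rangle_{\mathcal H(k_p)}$ with $\E_{\bX\sim\bP}[\xi_\bX]=0$ (the defining property of the Stein kernel), we have $m_n^2\,\KSD^2(\bP,\bQ_{m_n})=\|S_{m_n}\|^2_{\mathcal H(k_p)}$ with $S_t=\sum_{j\le t}\xi_{\bZ_{\pi_j}}$. The regularized greedy rule picks $\pi_t$ minimizing $\|S_{t-1}+\xi_{\bZ_i}\|^2+R_t(\bZ_i)$ over $i\le n$, where $R_t(\bx)=\Delta^{+}\log p(\bx)-\lambda_{m_n}t\log p(\bx)$. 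Since the chain targets $\bQ\neq\bP$, I would compare this minimum not to a uniform average but to the self-normalized importance weights $v_i\propto\frac{d\bP}{d\bQ}(\bZ_i)$ (legitimate because $\bP\ll\bQ$), giving the Frank--Wolfe-type recursion
\begin{equation*}
\|S_t\|^2 \;\le\; \|S_{t-1}\|^2 + 2\,\|S_{t-1}\|\,\tilde\delta_n + \sum_i v_i\,k_p(\bZ_i,\bZ_i) + \sum_i v_i R_t(\bZ_i) - R_t(\bZ_{\pi_t}),
\end{equation*}
where $\tilde\delta_n=\KSD(\bP,\sum_i v_i\delta_{\bZ_i})$; iterating it yields $\KSD^2(\bP,\bQ_{m_n})\le\big(\tilde\delta_n+m_n^{-1}\sqrt{\text{total perturbation}}\big)^2$.

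Next I would control each piece almost surely. Self-normalized importance sampling under $V$-uniform ergodicity and the moment conditions of Assumption \ref{assumption_MCMC} — in particular $V(\bx)\ge\frac{d\bP}{d\bQ}(\bx)\sqrt{2\beta d/\ell^2+\|s_p(\bx)\|_2^2}$, which bounds $\frac{d\bP}{d\bQ}(\bx)\sqrt{k_p(\bx,\bx)}$ — gives $\tilde\delta_n\to0$ and $\sum_i v_i k_p(\bZ_i,\bZ_i)\to\E_\bP[k_p(\bX,\bX)]<\infty$, which is exactly the MCMC-consistency step of \citet{riabiz2020optimal}. A Borel--Cantelli argument with the exponential-moment hypotheses yields $\max_{i\le n}\{k_p(\bZ_i,\bZ_i),\,\Delta^{+}\log p(\bZ_i),\,|\log p(\bZ_i)|\}=O(\log n)$ a.s., while $\sum_i v_i\Delta^{+}\log p(\bZ_i)$ and $\sum_i v_i\log p(\bZ_i)$ converge to $\E_\bP[\Delta^{+}\log p]$ and $\E_\bP[\log p]$. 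Using $\Delta^{+}\log p\ge0$ to drop $-\Delta^{+}\log p(\bZ_{\pi_t})\le0$, the per-step perturbation is $O(1)+O(\lambda_{m_n}t\log n)$, so the total over $t\le m_n$, divided by $m_n^2$, is at most $O(1/m_n)+O(\lambda_{m_n}\log n)$, which vanishes under the stated rates $m_n>\log(n)^\alpha$ ($\alpha>1$) and $\lambda_{m_n}=o(\log m_n/m_n)$. Hence $\KSD^2(\bP,\bQ_{m_n})\to0$ a.s., and distant dissipativity with the IMQ kernel upgrades this to $\bQ_{m_n}\Rightarrow\bP$ via \citet{gorham2017measuring}.

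The main obstacle is this last bookkeeping: re-running the Frank--Wolfe recursion with the $t$-dependent, unbounded-below perturbation $R_t$ and verifying the entropic term does not accumulate — this is precisely where the rate $\lambda_{m_n}=o(\log m_n/m_n)$ is needed to offset the $O(\log n)$ growth of $\max_{i\le n}|\log p(\bZ_i)|$ produced by the exponential-moment bounds, and where the $V$-norm condition of Assumption \ref{assumption_MCMC} must be matched exactly to the diagonal estimate $k_p(\bx,\bx)\le 2\beta d/\ell^2+\|s_p(\bx)\|_2^2$ of the IMQ Stein kernel so that the importance-weighted feature sums inherit the strong law from \citet{riabiz2020optimal}. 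Establishing $\tilde\delta_n\to0$ for the self-normalized weighted empirical measure (rather than the plain one, which would only converge to $\bP$ if $\bQ=\bP$) is the other point requiring care, and is the reason the absolute-continuity and $V$-norm hypotheses appear.
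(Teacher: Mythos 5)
Your proposal follows essentially the same route as the paper: greedily bound $\KSD^2(\bP,\bQ_{m_n})$ via a Frank--Wolfe-type recursion (this is the paper's Lemma~\ref{lemma_bound_ksd}), use $\Delta^{+}\log p\ge 0$ to discard the $-\Delta^{+}\log p(\bZ_{\pi_t})$ term, control the $\max_{i\le n}$ quantities at rate $O(\log n)$ by Borel--Cantelli under the exponential-moment hypotheses (Lemma~\ref{lemma_max_fxixi}), cancel the entropic contribution with $\lambda_{m_n}=o(\log m_n/m_n)$, and finish with the KSD convergence-determining property for distantly dissipative targets. The only stylistic differences: the paper compares the greedy iterate to the \emph{optimal} simplex weights $w^\star$ (Lemma~\ref{lemma_bound_ksd}) and then cites \citet{riabiz2020optimal} for $\KSD(\sum_i w_i^\star\delta(\bZ_i))\to 0$, whereas you compare directly to the self-normalized importance weights $v_i\propto\tfrac{d\bP}{d\bQ}(\bZ_i)$, which amounts to inlining one step of that cited argument (valid, since the optimal weights dominate). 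One caveat worth making explicit: your line ``iterating it yields $\KSD^2\le(\tilde\delta_n+m_n^{-1}\sqrt{\text{total perturbation}})^2$'' and the subsequent ``$O(1/m_n)$'' rate are slightly off; the correct iterated bound (the careful induction with the quantity $C_t$ in the paper's Lemma~\ref{lemma_bound_ksd}) yields a $\tfrac{1+\log m_n}{m_n}$ factor in front of the $\max$ terms, not $\tfrac{1}{m_n}$ — but this still vanishes under $m_n>\log(n)^\alpha$, so the conclusion is unaffected, and you correctly flag the recursion bookkeeping as the delicate point.
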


Theorem \ref{thm_MCMC}, proved in Appendix \ref{app:bound_ksd}, provides us with interesting insights about the entropic regularization strength $\lambda$. 
We already know that $\lambda$ should be chosen with a rate at least as fast as $\smash{O(1/m)}$, to avoid the introduction of a higher a bias in the $\LKSD$ than the original KSD. Indeed, for a sample drawn from the true target distribution $p$, this bias $\smash{\E[\LKSD^2(\bP, \bP_m)]}$ takes the form $\E[k_p(\bX, \bX)]/m + \E[\Delta^{+} \log(p(\bX))]/m - \lambda \E[\log(p(\bX))]$. Therefore, for slower rates of $\lambda$ than $O(1/m)$, trivial samples concentrated at a local maximum of the target $p$, can have smaller $\LKSD$ than samples drawn from $p$.
Then, Theorem \ref{thm_MCMC} states that, for our ultimate application of MCMC post-processing, the Stein thinning sample distribution converges towards the target for such $\lambda$ rates of $O(1/m)$ or faster.
In practice, in our Bayesian setting, it is not possible to fine tune this parameter $\lambda$ because no metric is available to assess the Stein thinning quality for various values of $\lambda$, as already mentioned in the case of the bandwidth parameter $\ell$. In addition, we cannot theoretically determine which exact range of values of $\lambda$ leads to good thinned samples in a finite sample regime. However, we will see in the experiments of the following section that both slower and faster $\lambda$ rates than $O(1/m)$ lead to samples of degraded quality. Therefore, we set $\lambda = 1/m$ in the regularized Stein thinning, to ensure good empirical performance and the algorithm convergence.

\section{Empirical Assessment} \label{sec:experiments}
This section shows how regularized Stein thinning outperforms the original algorithm through three batches of experiments: mixtures of standard distributions using exact or MCMC sampling, and Bayesian logistic regression on real datasets. For the experiments considered in Sections \ref{sec:experiment_gaussian_mixture} and \ref{sec:bayesian_logistic_regression}, two Metropolis-Hastings samplers are considered with the Metropolis-Adjusted Langevin Algorithm (MALA) and the No-U-Turn sampler (NUTS). We use the IMQ kernel with $\ell$ set with the median heuristic, $\beta= 1/2$, and $c = 1$, as recommended in \citet{chen2018stein, riabiz2020optimal}. We also set the regularization parameter with the default value of $\lambda = 1/m$. 
Notice that additional experiments are provided in Appendix \ref{app:experiments}, and that the code is available at \href{https://gitlab.com/drti/kernax}{https://gitlab.com/drti/kernax}.

When the target distribution is known, the efficiency of the Stein thinning algorithms are assessed by computing the MMD distance (see Equation \eqref{eq:mmd_def}) between a large sample drawn from the target distribution and the thinned samples. More specifically, we use the following closed-form expression of the MMD \citep{gretton2006kernel} with $\bX, \bX' \sim \bP$ and $\bZ, \bZ' \sim \bQ$,
\begin{align} \label{eq:mmd_def_closed}
    \MMD_k^2(\bP,\bQ) = \E[k(\bX, \bX')] + \E[k(\bZ,\bZ')] - 2\E[k(\bX,\bZ)]\,,
\end{align}
where the kernel function $k$ is chosen as the distance-induced kernel studied by \citet{sejdinovic2013equivalence} and given by $k(\bx,\bx') = \|\bx\|_2 + \|\bx'\|_2 - \|\bx-\bx'\|_2$, for $\bx, \bx' \in \Rd$. In this setting, the MMD reduces to the well known energy distance, as shown by \citet{sejdinovic2013equivalence}.

\subsection{Gaussian mixtures with exact sampling} \label{sec:exact_sampling_gaussian_mixtures}
As a first batch of experiments, we build on Example \ref{example_mixture} and consider more complicated two-dimensional Gaussian mixtures to further illustrate the correction of Pathologies I \& II. The first Gaussian mixture is made of four modes located at $\bs{\mu}_1 = (-3,3)$, $\bs{\mu}_2 = (-3, 3)$, $\bs{\mu}_3 = (3,3)$, and $\bs{\mu}_4 = (3, -3)$, and with weights $w_1 = w_2 = 0.1$, and $w_3 = w_4 = 0.4$, respectively. The second mixture is taken from \citep{qiu2023efficient}. It is made of $6$ equally weighted Gaussian distributions centered at $\bs{\mu}_i = (3\cos(2\pi(i-1)/6), 3\sin(2\pi(i-1)/6))$, for $i = 1, \dots, 6$. For both experiments, we rely on exact Monte Carlo sampling to generate $n = 3000$ observations, and apply Stein thinning and its regularized variant to select $m = 300$ particles. The observed samples and the selected particles are shown in Figure \ref{fig:exact_gaussian_mixtures}. The first example shows that vanilla Stein thinning does not capture the right proportions, while the regularized variant appropriately penalizes modes with lower weights. The second example illustrates Pathology II, which is corrected by the regularized Stein thinning.

\begin{figure}[h]
\centering
\begin{minipage}{0.24\textwidth}
    \includegraphics[width=\textwidth]{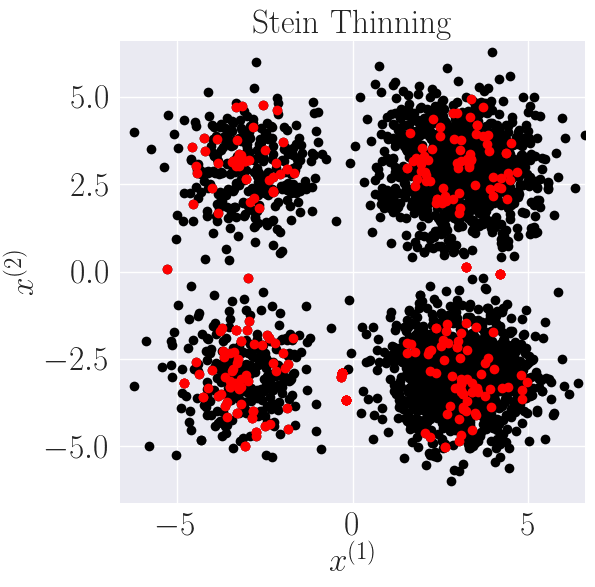}
\end{minipage}
\begin{minipage}{0.24\textwidth}
    \includegraphics[width=\textwidth]{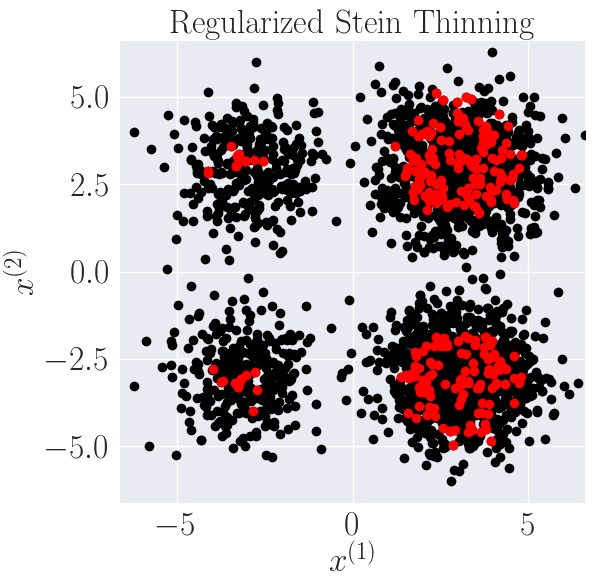}
\end{minipage}
\begin{minipage}{0.24\textwidth}
    \includegraphics[width=\textwidth]{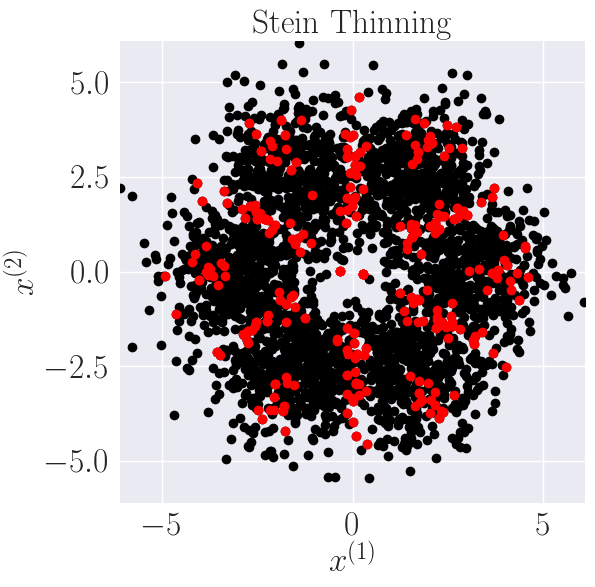}
\end{minipage}
\begin{minipage}{0.24\textwidth}
    \includegraphics[width=\textwidth]{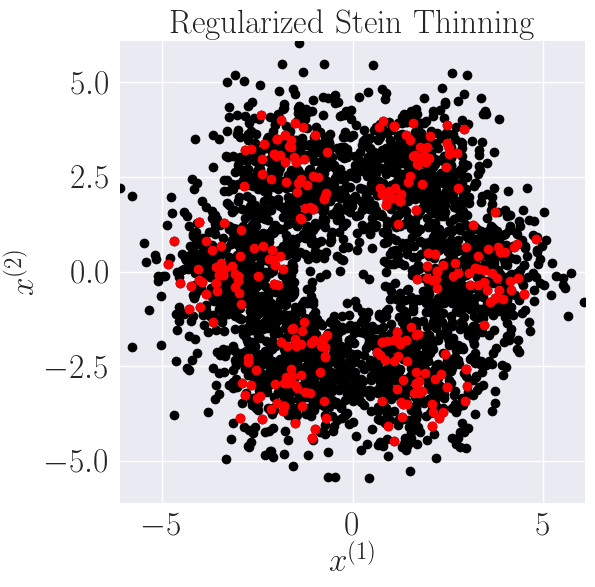}
\end{minipage}
\caption{Gaussian mixtures with exact Monte Carlo sampling. Solutions (red dots) obtained by Stein thinning and its regularized variant.}
\label{fig:exact_gaussian_mixtures}
\end{figure}

\subsection{Banana-shaped and Gaussian mixtures with MCMC sampling} \label{sec:experiment_gaussian_mixture}
\begin{figure}
    \centering
    \includegraphics[width=0.82\textwidth]{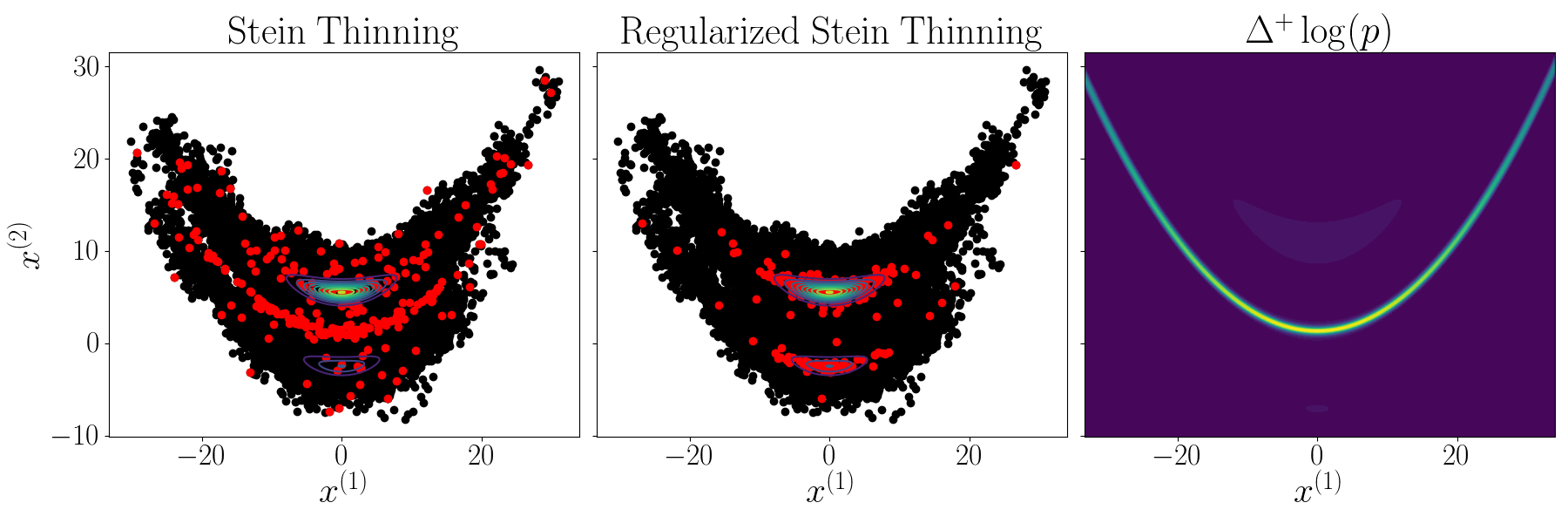} 
    \caption{t-banana-shaped mixture ($d=10$). From left to right: solutions obtained with standard and regularized Stein thinning with contour lines of $p$, and heatmap of the Laplacian correction.}
    \label{fig:mob_solutions_and_heatmap}
\end{figure}
We consider a mixture of two distant modes of $d$-dimensional banana-shaped distributions with t-tails and unbalanced weights \citep{haario1999adaptive,pompe2020framework}, illustrated in Figure \ref{fig:mob_solutions_and_heatmap}, and precisely defined in Appendix \ref{app:banana}. We sample this target banana mixture with both MALA and NUTS using three different step sizes $\varepsilon$ and $10^{5}$ iterations. The generated samples are post-processed with the Stein thinning and regularized Stein thinning algorithms, and their performances are compared with the MMD between the post-processed samples and large samples drawn from the known target banana mixture.
This experiment is run for various thinning sizes $m$ and dimensions $d$, with $20$ repetitions to quantify uncertainties. The results obtained with the MALA sampler are shown in Figure \ref{fig:mog4_thinning_size}: the regularized Stein thinning clearly generates samples of higher quality than the vanilla Stein thinning. Additionally, an example of post-processed MALA output is depicted in Figure \ref{fig:mob_solutions_and_heatmap}, together with a heatmap of the Laplacian correction. On the left panel of Figure \ref{fig:mob_solutions_and_heatmap}, we see that pathologies are especially strong in this experiment, with a large number of particles lying between the two modes in a region of low probability. On the right panels, we observe that regularized Stein thinning fix pathologies.
Similar results were obtained with NUTS and are reported in Appendix \ref{app:banana} for brevity. 
Next, we conduct the same experiments for a $d$-dimensional Gaussian mixture of four modes with different variances, as detailed in Appendix \ref{app:banana}. Again, Figure \ref{fig:mog4_thinning_size} shows the better performance of regularized Stein thinning.
Besides, we take advantage of this last experiment to explore other regularization rates than our default $\lambda = 1/m$. Figure \ref{fig:mog4_thinning_size_mala_lambda} in Appendix \ref{app:banana} shows that a slower rate of $\lambda = 1/\log(m)$, which violates the convergence assumptions of Theorem \ref{thm_MCMC}, has significantly worse performance than the original Stein thinning. On the other hand, with a faster rate than $1/m$ such as $1/m^2$, the effect of the entropic regularization disappears, and we recover similar results than the original Stein thinning. This supports that the default value of $\lambda = 1/m$ is an efficient heuristic, since slower and faster rates of $\lambda$ strongly degrade the algorithm performance.
\begin{figure}
    \centering
    \includegraphics[width=0.24\textwidth]{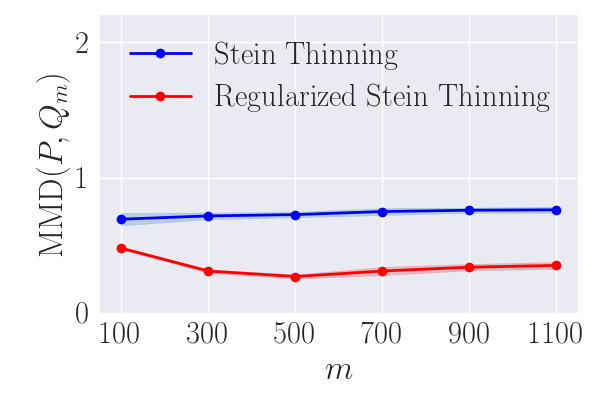}
    \includegraphics[width=0.24\textwidth]{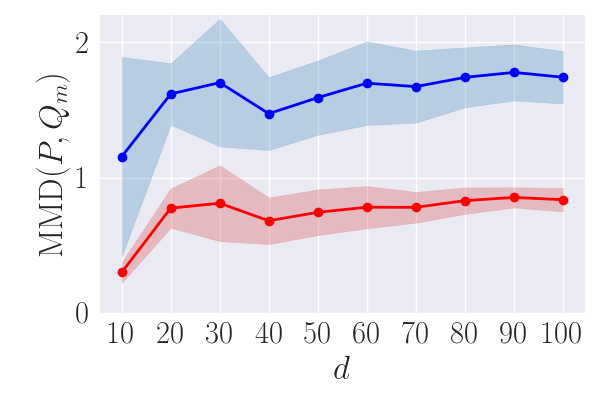}
    \includegraphics[width=0.24\textwidth]{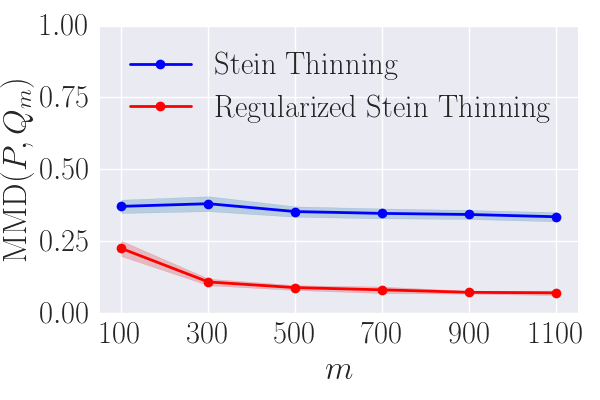}
    \includegraphics[width=0.24\textwidth]{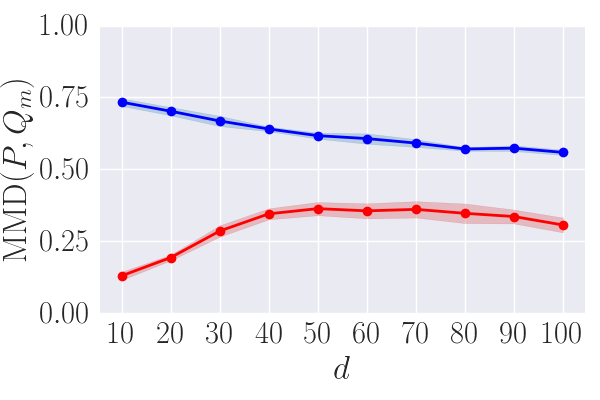}
    \caption{(MALA) Graphs of the MMD with respect to the thinning size $m$ (for $d = 2$) and with respect to $d$ (for $m = 300$). Left two panels: banana mixture. Right two panels: Gaussian mixture.}
    \label{fig:mog4_thinning_size}
\end{figure}

\subsection{Bayesian logistic regression} \label{sec:bayesian_logistic_regression}
We now compare the two Stein thinning algorithms in the Bayesian logistic regression setting for binary classification, since such problem usually involves multimodal posterior---see, \textit{e.g.}, \citet{gershman2012nonparametric,liu2016stein,fong2019scalable,korba2021kernel}.  Given a dataset $\mathcal{D}_N = \{(\bX_i, Y_i)\}_{i=1}^{N}$ made of $N$ pairs of features $\bX_i \in \mathbb{R}^d$ and labels $Y_i \in \{0,1\}$, the probability that $Y_i$ is of class $1$ is given by $\smash{p(Y_i = 1 | \bX_i, \bbeta, \beta_0) = 1/(1 + \exp(-\beta_0 - \bbeta^T \bX_i))}$, for some parameters $\btheta = (\beta_0, \bbeta) \in \mathbb{R}^{d+1}$. 
The prior distributions of the weight vector $\btheta$ is assumed to be Gaussian, $\smash{p(\beta^{(j)}|\gamma^{(j)}) = \mathcal{N}(\beta^{(j)}|0,1/\gamma^{(j)})}$, and a Gamma prior with parameters $(a,b)$ is chosen for the precision $\smash{\gamma^{(j)}}$. Following \citep{fong2019scalable}, the hyperparameters are chosen as $a = b= 1$. 
\begin{wraptable}{r}{0.58 \textwidth}
\setlength{\tabcolsep}{1.0pt}
\renewcommand{\arraystretch}{0.95}
\caption{AUCs obtained with NUTS sampler for Stein Thinning (ST) and Regularized Stein Thinning (RST).}
\label{tab:aucs}
\centering
\begin{tabular}{ccccc}
\toprule
& \multicolumn{2}{c}{$m = 50$} & \multicolumn{2}{c}{$m = 300$} \\
\cmidrule{2-5}
Dataset & ST & RST & ST & RST  \\
\midrule
Breast W. &0.88 (0.02) &\textbf{0.96} (0.00)   &0.93 (0.01) &\textbf{0.96} (0.00) \\
Diabetes &\textbf{0.52} (0.01) &0.50 (0.02)  &0.53 (0.02) &\textbf{0.57} (0.02) \\
Haberman &0.51 (0.04) &\textbf{0.53} (0.02)  &0.53 (0.03) &\textbf{0.58} (0.02) \\
Liver &0.53 (0.04) &\textbf{0.69} (0.01)  &0.61 (0.04) &\textbf{0.70} (0.01) \\
Sonar  &0.80 (0.02) &\textbf{0.81} (0.01)  &0.81 (0.01) &0.81 (0.01) \\
\bottomrule
\end{tabular}
\end{wraptable}
The posterior distribution of the weights $\btheta$ is sampled with both MALA and NUTS using $48$ independent chains, of respectively $10^4$ and $10^5$ iterations, and four step sizes $\varepsilon$ are considered along with three thinning sizes $m$. Each MCMC sample is post-processed with the two Stein thinning algorithms. For a new input $\bx^{\star}$, the resulting thinned samples are used to approximate the posterior predictive distribution $p(Y = 1|\bx^{\star}, \mathcal{D}_N)$, defined by $\int p(Y = 1|\bx^{\star},\btheta) p(\btheta|\mathcal{D}_N) d\btheta$.  
The performance of Stein thinning algorithms are assessed using the standard AUC metric for classification problems, estimated with $10$-fold cross-validation and $10$ repetitions for uncertainties.
Table \ref{tab:aucs} gathers the results for five public datasets from the UCI repository \citep{Dua:2019}, and described in Appendix \ref{app:logistic}, where the best AUC obtained for each algorithm over the four MCMC step sizes are reported. Clearly, regularized Stein thinning significantly improves the performance of Bayesian logistic regression.

\section{Conclusion}
Stein thinning has raised a high interest in recent years, as a powerful tool to post-process MCMC outputs, by the greedy minimization of the kernelized Stein discrepancy. Unfortunately, empirical studies have shown that KSD-based algorithms suffer from strong pathologies. We have conducted an in-depth theoretical analysis to identify the mechanisms at stake. From this understanding, we propose an improved Stein thinning algorithm relying on entropic regularization and Laplacian correction. This approach exhibits relevant theoretical properties regarding pathologies, as well as highly improved empirical performance.
Finally, the analysis of these regularization terms for other types of KSD-based algorithms, such a KSD descent, seems a promising route for future work.

\bibliography{bibfile}

\clearpage

\appendix
\section*{Appendix}
\section{Additional Experiments} \label{app:experiments}

\subsection{Illustration of Theorem \ref{thm_modes}} \label{app:thm_modes_illustration}

To better illustrate Theorem \ref{thm_modes}, we run an additional experiment, where $p$ is still defined as in Figure \ref{fig:app:pathology_I_illustrations} from Example \ref{app:example_mixture} recalled below, with unbalanced mode weights of $0.2$ and $0.8$. The density $q$ is distributed as $p$, but each mode is truncated outside a circle of two standard deviation radius, and $q$ has weight $w$. Next, for various values of $w \in [0.1, 0.9]$, we draw two samples of size $n = 3000$ from $p$ and $q$, and compute $\KSD(\bP, \bQ_w)$ (with 30 repetitions for each $w$ value). The result is displayed in Figure \ref{fig:app:pathology_I_KSD}, and shows that the optimal weight is close to $1/2$, as predicted by Theorem \ref{thm_modes}, since $| \KSD^2(\bP, \bQ_L) / \KSD^2(\bP, \bQ_R) - 1|$ is estimated as $0.01$ in this case, implying that $|w^{\star} - 1/2| < 0.005$.
\begin{example} \label{app:example_mixture}
    Let the density $p$ be a Gaussian mixture model of two components, respectively centered in $\smash{(-\mu,\mathbf{0}_{d-1})}$ and $\smash{(\mu, \mathbf{0}_{d-1})}$, of weights $w$ and $1 - w$, and of variance $\smash{\sigma^2 \mathbf{I_d}}$.
    The initial particles $\{\bx_i\}_{i=1}^n$ are drawn from $p$. The KSD thinning algorithm selects $m < n$ points to approximate $p$.
\end{example}

\begin{figure}[h]
    \centering
    \includegraphics[width=0.49\textwidth]{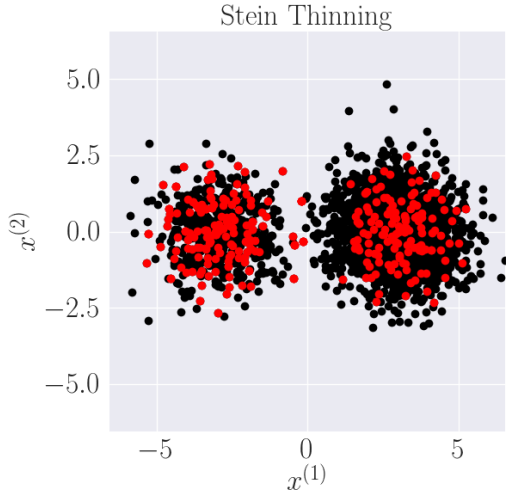}
    \caption{\small{Illustration of Pathology I with the Gaussian mixture of Example \ref{app:example_mixture} ($d = 2$, $\mu = 3$, $\sigma = 1$, $w = 0.2$, $n = 3000$, $m = 300$). Initial particles are in black, and the Stein thinning output is red.}}
    \label{fig:app:pathology_I_illustrations}
\end{figure}

\begin{figure}[h]
    \centering
    \begin{minipage}{0.5\textwidth}
        \includegraphics[width=\textwidth]{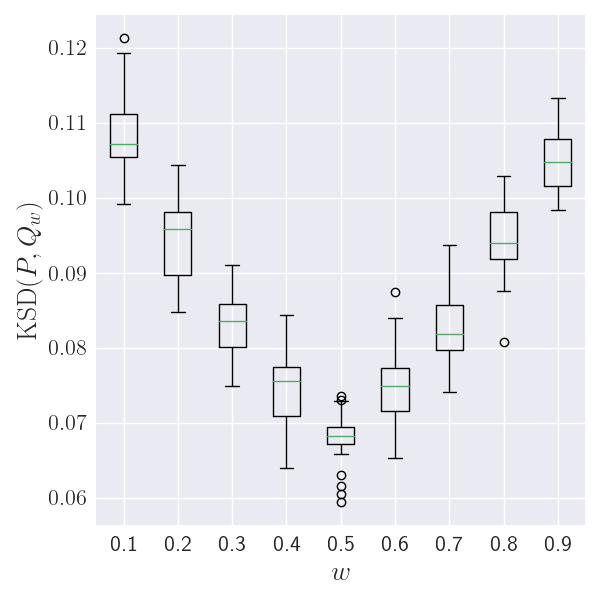}
    \end{minipage}
    \caption{$\KSD(\bP, \bQ_w)$ for $p$ as defined in Example \ref{app:example_mixture} with $\mu = 3$, $\sigma = 1$, $w_p = 0.2$, and $q$ a truncation of $p$ and with weight $w$. The KSD is estimated with $n = 3000$ and $30$ repetitions for each $w$ value.}
    \label{fig:app:pathology_I_KSD}
\end{figure}

\subsection{Gaussian and Banana-shaped Mixtures} \label{app:banana}
This appendix gathers additional results and details for the Gaussian and banana-shaped mixtures, as well as the MMD distance used to evaluate thinning performance, and the regularization parameter $\lambda$.


\paragraph{Gaussian mixture.}
The second batch of experiments in Section $4$ considers a $d$-dimensional Gaussian mixture of four modes of equal weight, with $d \geq 2$, illustrated in Figure \ref{fig:mog_solutions_and_heatmap_app}. The center of modes are chosen as $(-2,0)$, $(2,0)$, $(-3,4)$, and $(3,4)$, and null values for the higher dimension coordinates. The first two modes have an identity covariance matrix, while the remaining two modes have a diagonal covariance matrix with variance equal to $2$. 
\begin{figure}
    \centering
    \includegraphics[width=\textwidth,trim=0 15 0 0,clip]{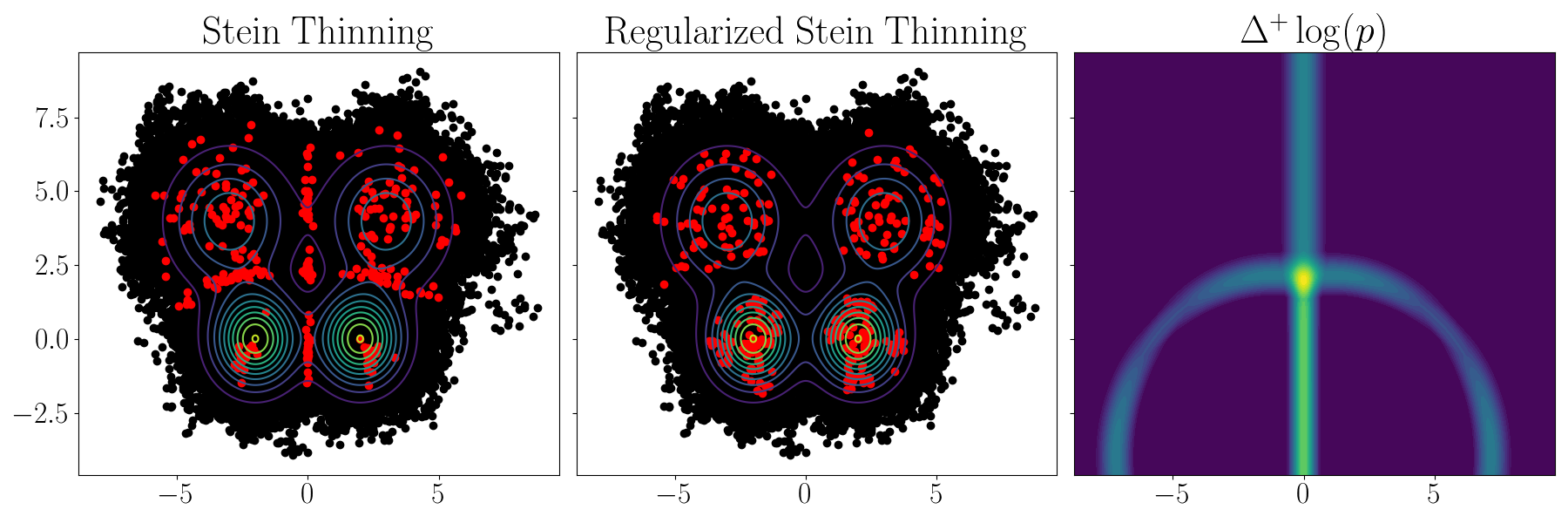}    
    \caption{(Gaussian mixture with $d=2$, MALA) First two panels: solutions obtained with Stein thinning and regularized Stein thinning with contour lines of the target distribution. Last panel: heatmap of the Laplacian correction $\Delta^+ \log(p)$.}
    \label{fig:mog_solutions_and_heatmap_app}
\end{figure}
The results for regularized Stein thinning and the original Stein thinning are provided in Figure \ref{full_fig:mog4_thinning_size} for MALA sampler, and in Figure \ref{full_fig:mog4_thinning_size_nuts} for NUTS sampler. In both figures, the three tested step size $\varepsilon$ are displayed, with a small impact on the resulting performance. Figures \ref{fig:mog_solutions_and_heatmap_app}, \ref{full_fig:mog4_thinning_size}, and \ref{full_fig:mog4_thinning_size_nuts} show the high performance improvement of regularized Stein thinning over the original algorithm.
\begin{figure}
    \centering
    \includegraphics[width=\textwidth]{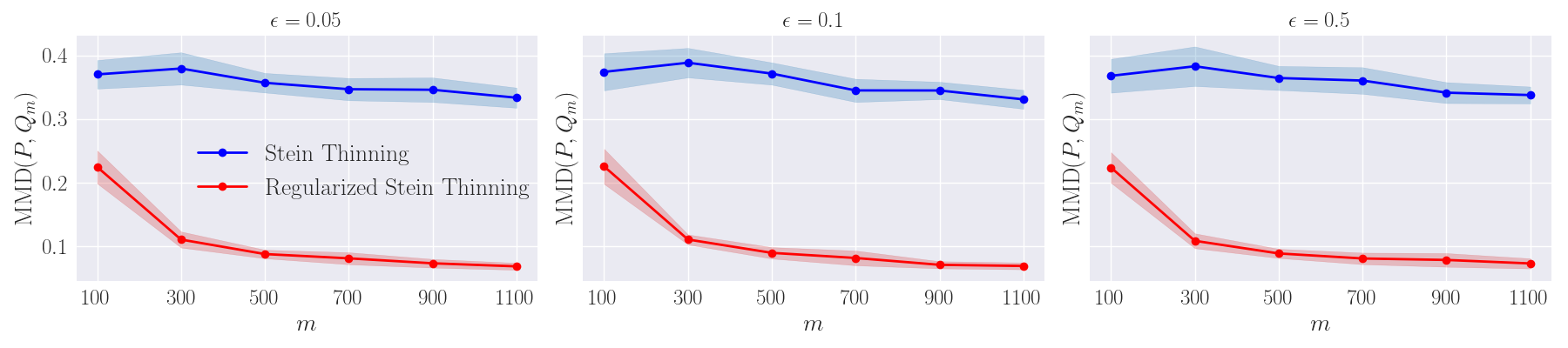}
    \includegraphics[width=\textwidth]{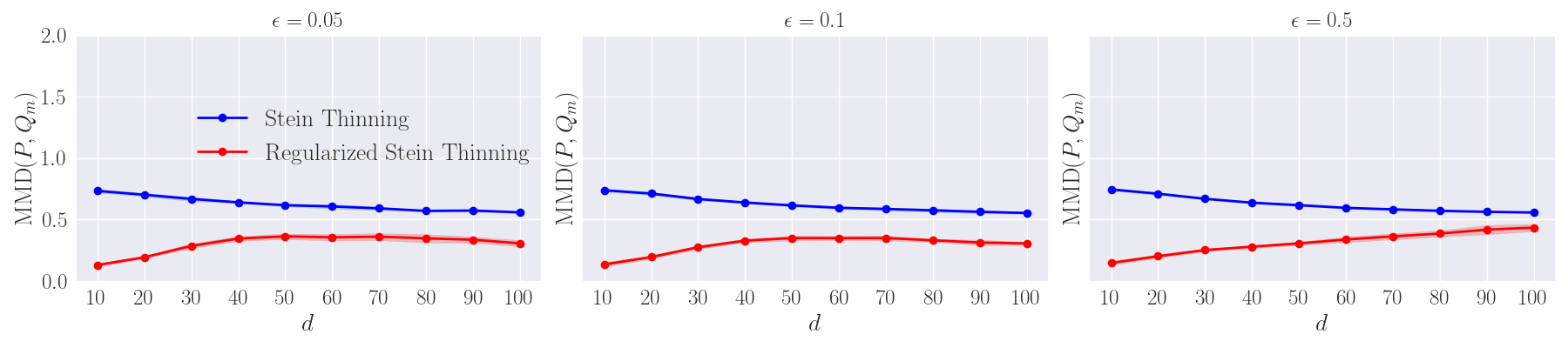}
    \caption{(Gaussian mixture, MALA) Graphs of the MMD distance with respect to the thinning size $m$ (with $d = 2$) and with respect to $d$ (with $m = 300$) for various step sizes $\varepsilon$.}
    \label{full_fig:mog4_thinning_size}
\end{figure}
\begin{figure}
    \centering
    \includegraphics[width=\textwidth]{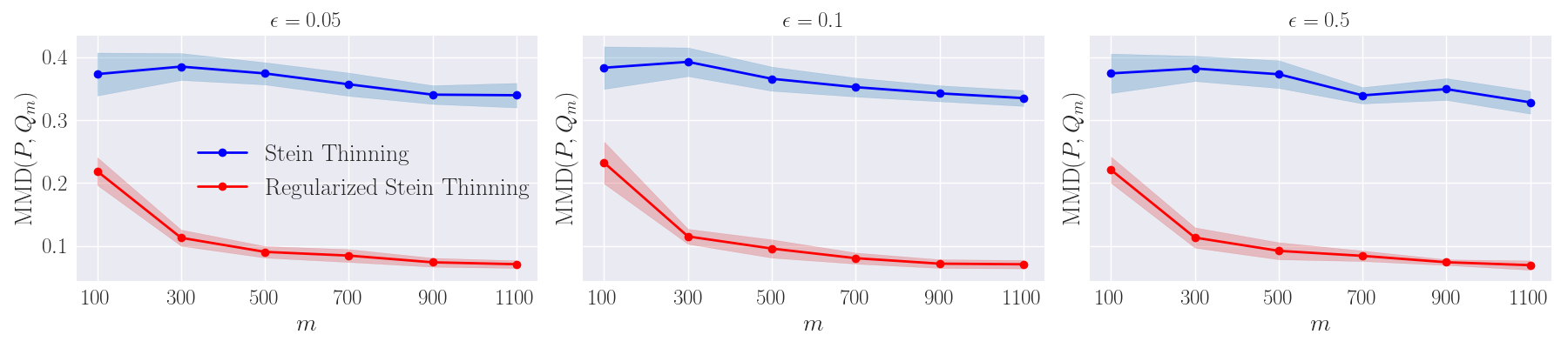}
    \includegraphics[width=\textwidth]{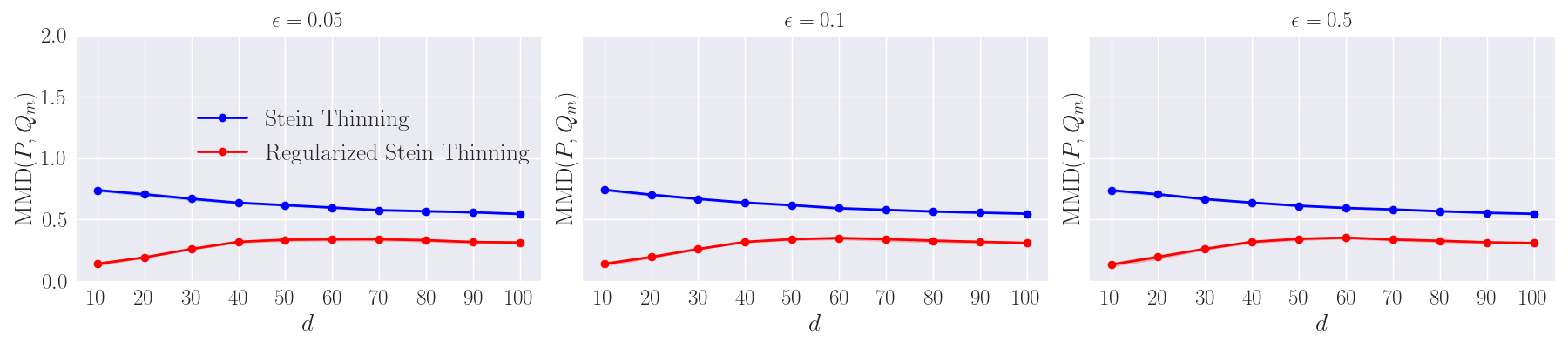}
    \caption{(Gaussian mixture, NUTS) Graphs of the MMD distance with respect to the thinning size $m$ (with $d = 2$) and with respect to $d$ (with $m = 300$) for various step sizes $\varepsilon$.}
    \label{full_fig:mog4_thinning_size_nuts}
\end{figure}

\paragraph{Regularization parameter $\lambda$.}
Figure \ref{fig:mog4_thinning_size_mala_lambda} displays the MMD obtained with regularization parameters $\lambda$, set as $\lambda = 1/m^2$ and $\lambda = 1/\log(m)$. These results should be compared with the ones shown in Figures \ref{full_fig:mog4_thinning_size} and \ref{full_fig:mog4_thinning_size_nuts}, which were obtained with a regularization parameter $\lambda = 1/m$. These additional experiments show the importance of choosing the regularization parameter as $\lambda = O(1/m)$, as suggested by Theorem \ref{thm_MCMC}. Indeed, slower rates of $\lambda$ give poor quality samples, and faster rates than $\lambda = O(1/m)$ tend to remove the effect of the entropic regularization, and we then recover similar performance than the original Stein thinning. On the other hand, $\lambda = 1/m$ provides a high improvement over the standard thinning, as shown in Figures \ref{full_fig:mog4_thinning_size} and \ref{full_fig:mog4_thinning_size_nuts}.
\begin{figure}
  \includegraphics[width=\textwidth]{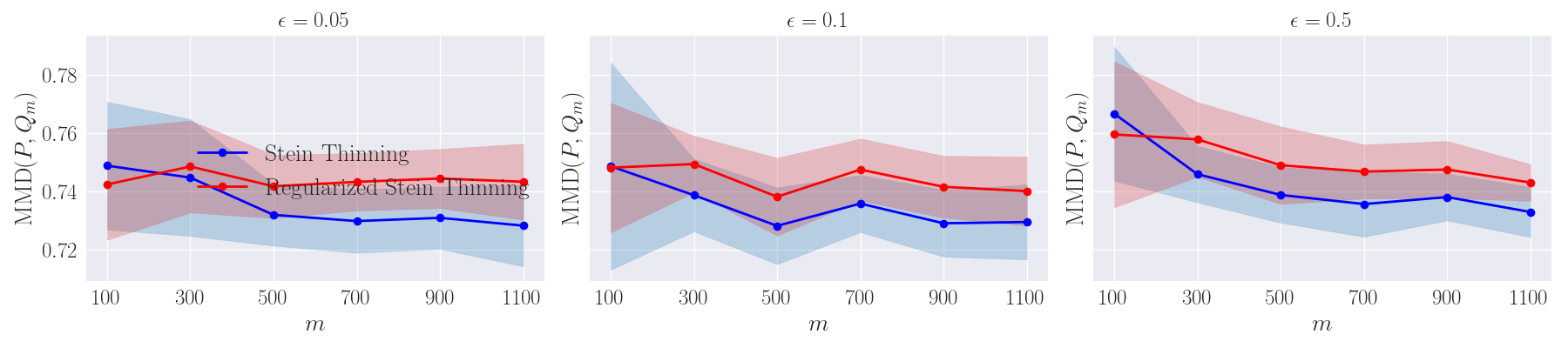}
  \includegraphics[width=\textwidth]{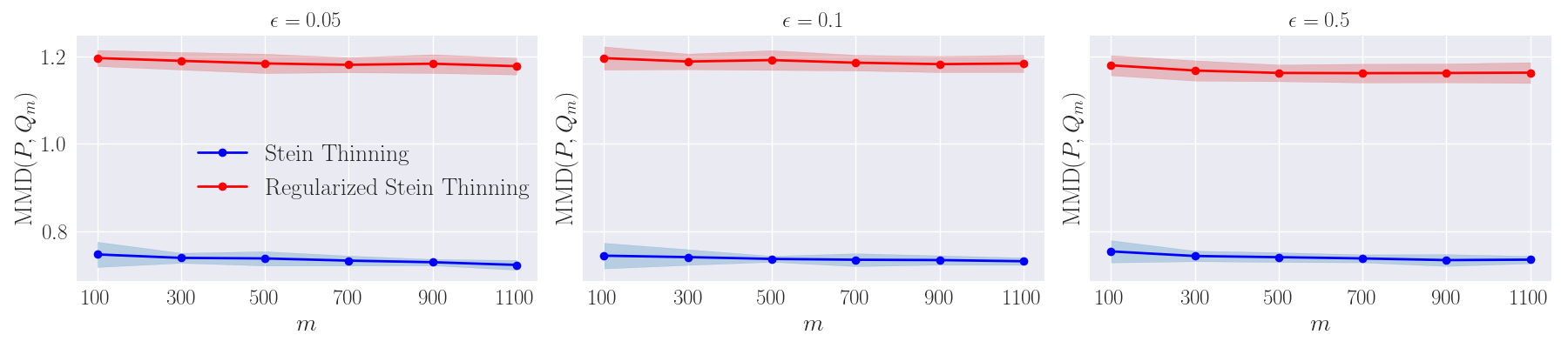}
\caption{(Gaussian mixture, MALA) Graphs of the MMD distance with respect to the thinning size $m$ (with $d = 10$) for various step sizes $\varepsilon$. For the first row, we set $\lambda = 1/m^2$, and we observe that the effect of entropic regularization almost vanishes, since the performance is close to the original Stein thinning. For the second row, we set $\lambda = 1/\log(m)$, violating the convergence assumption, and resulting in bad thinned samples.}
\label{fig:mog4_thinning_size_mala_lambda}
\end{figure}

\paragraph{Banana-shaped mixture with t-tails.}
The first batch of experiments in Section $4$ considers a banana-shaped mixture with t-tails, defined as follows.
Let $\varphi : \mathbb{R}^d \rightarrow \mathbb{R}^d$ be the transformation defined by $\varphi_i(x) = x_i$ if $i \neq 2$, and $\varphi_2(x) = x_2 + bx_1^2 - 100b$. Let $\mathbf{Z}$ be a random variable that follows the multivariate t-Student distribution with degrees of freedom $7$. Then, the random variable $\mathbf{X} = \varphi(\mathbf{Z}) + \bs{\mu}$ follows a t-banana-shaped distribution centered at $\bs{\mu}$. We consider a mixture of two t-banana-shaped distributions centered in $\mathbf{0}_d$ and $(0,8,\mathbf{0}_{d-2})$, with weights $w_1 = 0.25$ and $w_2 = 0.75$, respectively, which is illustrated in Figure \ref{fig:mob_solutions_and_heatmap_app} for $d = 10$.
\begin{figure}
    \centering
    \includegraphics[width=\textwidth]{subplot_mobt_trace_and_heatmap.png}    
    \caption{(t-banana-shaped mixture with $d=10$, MALA) First two panels: solutions obtained with Stein thinning and regularized Stein thinning with contour lines of the target distribution. Last panel: heatmap of the Laplacian correction $\Delta^+ \log(p)$ for $x^{(3)} = \hdots = x^{(10)} = 0$.}
    \label{fig:mob_solutions_and_heatmap_app}
\end{figure}
The results for regularized Stein thinning and the original Stein thinning are provided in Figure \ref{full_fig:mobt2_mala_thinning_size} for MALA sampler, and in Figure \ref{full_fig:mobt2_nuts_thinning_size} for NUTS sampler. In both figures, the three tested step size $\varepsilon$ are displayed, which confirms the higher performance of regularized Stein thinning. Figures \ref{fig:mob_solutions_and_heatmap_app}, \ref{full_fig:mobt2_mala_thinning_size}, and \ref{full_fig:mobt2_nuts_thinning_size} show the high performance improvement of regularized Stein thinning over the original algorithm.
\begin{figure}
    \centering
    \includegraphics[width=\textwidth]{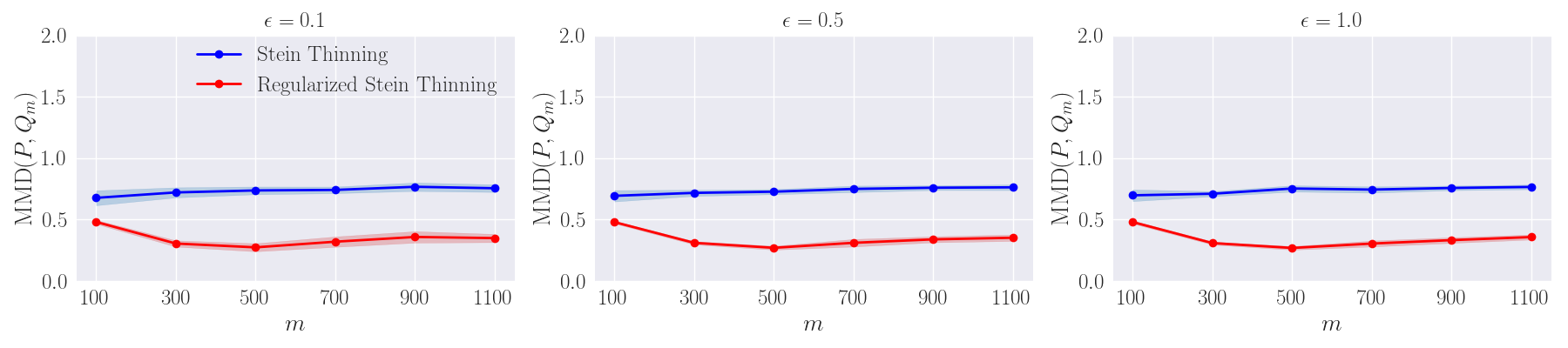}
    \includegraphics[width=\textwidth]{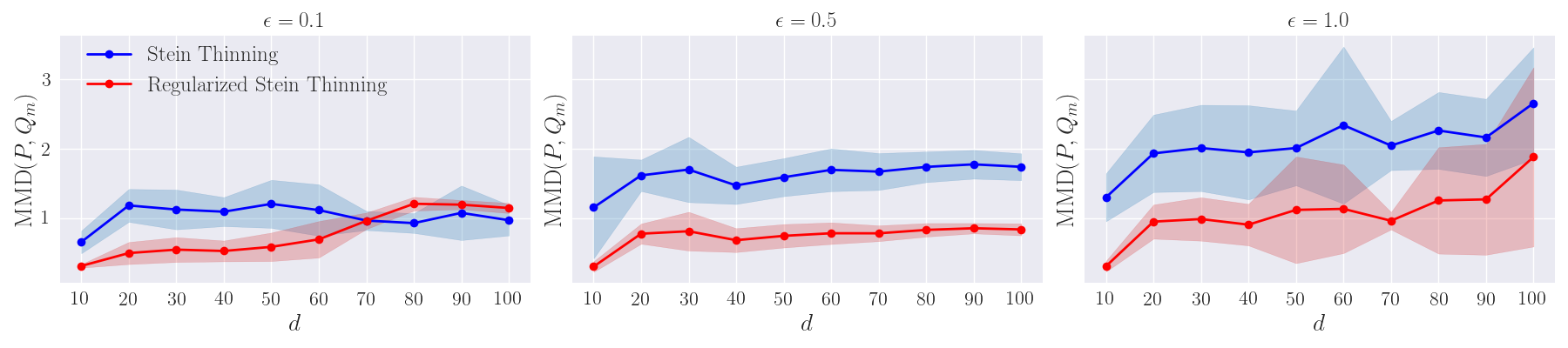}
    \caption{(Mixture of t-banana-shaped distributions, MALA) Graphs of the MMD distance with respect to the thinning size $m$ (with $d = 2$) and with respect to $d$ (with $m = 300$)  for various step sizes $\varepsilon$.}
    \label{full_fig:mobt2_mala_thinning_size}
\end{figure}
\begin{figure}
    \centering
    \includegraphics[width=\textwidth]{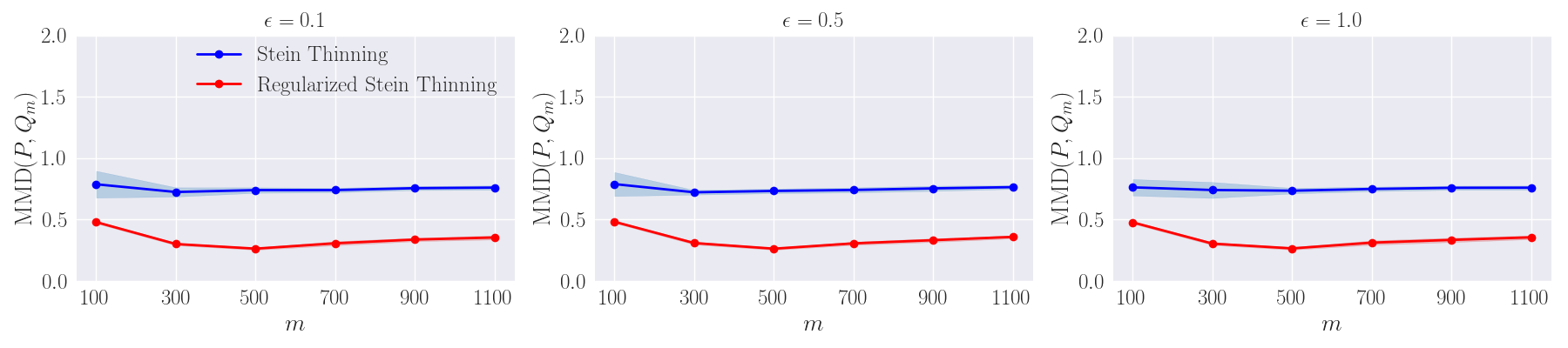}
    \includegraphics[width=\textwidth]{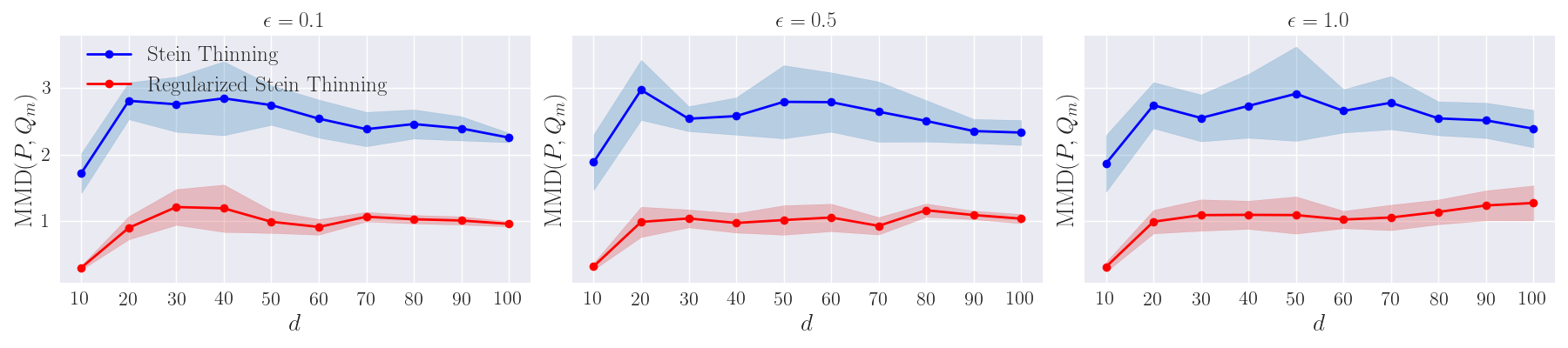}
    \caption{(Mixture of t-banana-shaped distributions, NUTS) Graphs of the MMD distance with respect to the thinning size $m$ (with $d = 2$) and with respect to $d$ (with $m = 300$) for various step sizes $\varepsilon$.}
    \label{full_fig:mobt2_nuts_thinning_size}
\end{figure}

\subsection{Bayesian Logistic Regression} \label{app:logistic}
This appendix gathers additional results for Bayesian logistic regression. In particular, Table \ref{tab:datasets} provides a description of the tested datasets. Table \ref{tab:aucs_app} gives the resulting AUC, for $m = 50, 100, 300$, using NUTS or MALA sampler. We recall that only the best AUC over the four tested MCMC step size $\varepsilon$ is reported. 

Recall that the Bayesian logistic regression defines the probability that $Y_i$ is of class $1$ as $\smash{p(Y_i = 1 | \bX_i, \bbeta, \beta_0) = 1/(1 + \exp(-\beta_0 - \bbeta^T \bX_i))}$, for some vector of parameters $\btheta = (\beta_0, \bbeta) \in \mathbb{R}^{d+1}$. 
The prior distributions of the weight vector $\btheta$ is assumed to be Gaussian, $\smash{p(\beta^{(j)}|\gamma^{(j)}) = \mathcal{N}(\beta^{(j)}|0,1/\gamma^{(j)})}$, and a Gamma prior with parameters $(a,b)$ is chosen for the precision $\smash{\gamma^{(j)}}$. Upon marginalizing, it is found that $\smash{\beta^{(j)}}$ is distributed as the non-standardized t-distribution $\mbox{Student-t}(2a,0,b/a)$ \citep{bishop2006pattern}. Following \citep{fong2019scalable}, the hyperparameters are chosen as $a = b= 1$. 
\begin{table}
\centering
\caption{Description of UCI datasets}
\label{tab:datasets}
\begin{tabular}{ccc}
\toprule
Dataset & Sample size & Dimension \\
\midrule
Breast Wisconsin & 569 & 30 \\
Diabetes & 768 & 8 \\
Haberman & 306 & 3 \\
Liver Disorders & 345 & 6 \\
Sonar & 208 & 60 \\
\bottomrule
\end{tabular}
\end{table}
\begin{table}
\setlength{\tabcolsep}{2.5pt}
\centering
\caption{AUCs obtained by Stein Thinning (ST) and Regularized Stein Thinning (RST). A $10$-fold cross-validation is performed and the experiments are repeated $10$ times to provide uncertainties.}
\label{tab:aucs_app}
\begin{tabular}{ccccccc}
\toprule
\multicolumn{7}{c}{NUTS Sampler} \\
\cmidrule{1-7}
& \multicolumn{2}{c}{$m = 50$} & \multicolumn{2}{c}{$m = 100$} & \multicolumn{2}{c}{$m = 300$} \\
\cmidrule{1-7}
Dataset & ST & RST & ST & RST & ST & RST  \\
\midrule
Breast W. &0.88 (0.020) &\textbf{0.96} (0.004)  &0.91 (0.023) &\textbf{0.96} (0.003)  &0.93 (0.008) &\textbf{0.96} (0.004) \\
Diabetes          &\textbf{0.52} (0.009) &0.50 (0.019)  &0.52 (0.021) &\textbf{0.55} (0.018)  &0.53 (0.015) &\textbf{0.57} (0.019) \\
Haberman          &0.51 (0.038) &\textbf{0.53} (0.023)  &0.54 (0.033) &\textbf{0.58} (0.035)  &0.53 (0.034) &\textbf{0.58} (0.017) \\
Liver   &0.53 (0.044) &\textbf{0.69} (0.014)  &0.56 (0.038) &\textbf{0.70} (0.013)  &0.61 (0.039) &\textbf{0.70} (0.011) \\
Sonar             &0.80 (0.021) &\textbf{0.81} (0.007)  &0.81 (0.009) &\textbf{0.82} (0.011)  &0.81 (0.011) &0.81 (0.009) \\
\midrule
\midrule
\multicolumn{7}{c}{MALA Sampler} \\
\cmidrule{1-7}
& \multicolumn{2}{c}{$m = 50$} & \multicolumn{2}{c}{$m = 100$} & \multicolumn{2}{c}{$m = 300$} \\
\cmidrule{1-7}
Dataset & ST & RST & ST & RST & ST & RST \\
\midrule
Breast W. &0.68 (0.044) &\textbf{0.93} (0.010)  &0.72 (0.048) &\textbf{0.93} (0.007)  &0.72 (0.037) &\textbf{0.88} (0.026)  \\
Diabetes  &\textbf{0.51} (0.012) &0.48 (0.010)  &\textbf{0.53} (0.028) &0.51 (0.014)  &0.53 (0.016) &\textbf{0.56} (0.016) \\
Haberman  &0.52 (0.034) &\textbf{0.60} (0.027)  &0.53 (0.024) &\textbf{0.58} (0.017)  &0.55 (0.024) &\textbf{0.61} (0.013) \\
Liver     &0.54 (0.033) &\textbf{0.70} (0.008)  &0.55 (0.034) &\textbf{0.69} (0.005)  &0.57 (0.024) &\textbf{0.62} (0.032) \\
Sonar     &0.80 (0.019) &0.80 (0.019)  &0.80 (0.010) &0.80 (0.010)  &\textbf{0.81} (0.013) &0.80 (0.010) \\
\bottomrule
\end{tabular}
\end{table}

\section{Proof of Theorem \ref{thm_modes}} \label{app:thm_modes}

\renewcommand\thetheorem{2.1}
\begin{assumption}[Distant bimodal mixture distributions] 
    Let $p$ and $q$ be two mixture distributions in $\Rd$, made of two modes centered in $(-\mu,\mathbf{0}_{d-1})$ and $(\mu, \mathbf{0}_{d-1})$, with $\mu > 0$. The distribution of each mode of $\smash{p \in \mathcal{C}^1(\Rd)}$ has $\smash{\Rd}$ as support, whereas each mode distribution of $q$ have a compact support, included in a ball of radius $r > 0$, with $r < \mu$. The left mode of $p$ has weight $w_p \neq 1/2$, and the right mode has weight $1 - w_p$. Similarly, $w$ and $1 - w$ are the mode weights of $q$. Let $\bQ_L$ and $\bQ_R$ be the probability measures that respectively admit the density of the left and right modes of $q$, and $\bP$ and $\bQ_w$ be also the probability laws for $p$ and $q$.
\end{assumption}

\renewcommand\thetheorem{2.2}
\begin{assumption} 
    For distant bimodal mixture distributions $q$ and $p$ satisfying Assumption \ref{def_bimodal}, and for $\eta \in  (0,1)$, we have $ \left| \KSD^2(\bP, \bQ_L)/\KSD^2(\bP, \bQ_R) - 1 \right| < \eta$.
\end{assumption}

\renewcommand\thetheorem{2.3}
\begin{theorem} 
    Let $k_p$ be the Stein kernel associated with the radial kernel $k(\bx, \bx') = \phi(\|\bx-\bx'\|_2/\ell)$, where $\smash{\bx, \bx' \in \Rd}$, $\ell > 0$, and $\smash{\phi \in \mathcal{C}^2(\mathbb{R})}$, such that $\phi(z) \rightarrow 0$, $\phi'(z) \rightarrow 0$, and $\phi''(z) \rightarrow 0$ for $z \to \infty$. Let $p$ and $q$ be two bimodal mixture distributions satisfying Assumptions \ref{def_bimodal} and \ref{assumption_epsilon}, for any $\eta \in  (0,1)$. We define $w^{\star}$ as the optimal mixture weight of $q$ with respect to the KSD distance, i.e., $\smash{w^{\star} = \underset{{w \in [0,1]}}{\mathrm{argmin}} \: \KSD(\bP, \bQ_w)}$.
    Then, for $\mu$ large enough, we have $\smash{\left|w^{\star} - \frac{1}{2}\right| < \frac{\eta}{2(1 - \eta)}}$.
\end{theorem}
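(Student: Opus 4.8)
The plan is to expand $\KSD^2(\bP,\bQ_w)$ as a quadratic in $w$ and analyze the minimizer. Since $\bQ_w$ is the mixture $w\bQ_L + (1-w)\bQ_R$, and $\KSD^2(\bP,\bQ) = \E_{\bZ,\bZ'\sim\bQ}[k_p(\bZ,\bZ')]$ is a (bi-affine) quadratic form in the measure $\bQ$, I would write
\begin{align*}
    \KSD^2(\bP,\bQ_w) = w^2 A_{LL} + 2w(1-w) A_{LR} + (1-w)^2 A_{RR},
\end{align*}
where $A_{LL} = \E[k_p(\bZ_L,\bZ_L')]$, $A_{RR} = \E[k_p(\bZ_R,\bZ_R')]$, $A_{LR} = \E[k_p(\bZ_L,\bZ_R)]$ with $\bZ_L,\bZ_L'\sim\bQ_L$ and $\bZ_R,\bZ_R'\sim\bQ_R$ independent. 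Note that $A_{LL} = \KSD^2(\bP,\bQ_L)$ and $A_{RR} = \KSD^2(\bP,\bQ_R)$ by definition.

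\textbf{Step 1: the cross term vanishes as $\mu\to\infty$.} The key geometric fact is that $\bQ_L$ is supported in a ball of radius $r$ around $(-\mu,\mathbf{0})$ and $\bQ_R$ in a ball of radius $r$ around $(\mu,\mathbf{0})$, so for $\bz_L$ in the support of $\bQ_L$ and $\bz_R$ in the support of $\bQ_R$ we have $\|\bz_L - \bz_R\|_2 \geq 2\mu - 2r \to \infty$. Inspecting the Langevin Stein kernel \eqref{eq:langevin_stein_kernel}, every term involves $k(\bx,\bx') = \phi(\|\bx-\bx'\|_2/\ell)$ or its first or second derivatives (the $\nabla_\bx$, $\nabla_{\bx'}$, $\nabla_\bx\nabla_{\bx'}$ pieces all produce $\phi'$, $\phi''$ evaluated at $\|\bx-\bx'\|_2/\ell$ times polynomial-in-$(\bx-\bx')$ prefactors), and the score factors $s_p(\bz_L), s_p(\bz_R)$ are bounded on the compact supports (since $p\in\mathcal{C}^1$). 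Using $\phi(z),\phi'(z),\phi''(z)\to 0$ as $z\to\infty$, and that the prefactors multiplying $\phi',\phi''$ — while polynomial in $\bx - \bx'$ — are dominated because $\phi$ decay is required to kill them (here I would either additionally assume or note that $\phi$ decays fast enough; in the radial/IMQ-type setting the derivatives come with compensating decay), one gets $A_{LR} = A_{LR}(\mu) \to 0$ as $\mu\to\infty$. I would make this precise by bounding $|k_p(\bz_L,\bz_R)| \leq \varepsilon(\mu)$ uniformly over the compact supports, with $\varepsilon(\mu)\to 0$, hence $|A_{LR}| \leq \varepsilon(\mu)$.

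\textbf{Step 2: minimize the limiting quadratic.} Dropping the cross term, $\KSD^2(\bP,\bQ_w) \approx w^2 A_{LL} + (1-w)^2 A_{RR}$, a strictly convex quadratic (both $A_{LL},A_{RR}>0$ since $\bQ_L,\bQ_R\neq\bP$ as they have compact support while $p$ does not) with minimizer $\bar w = A_{RR}/(A_{LL}+A_{RR})$. Writing $\rho = A_{LL}/A_{RR} = \KSD^2(\bP,\bQ_L)/\KSD^2(\bP,\bQ_R)$, Assumption \ref{assumption_epsilon} gives $|\rho - 1| < \eta$, so $\bar w = 1/(1+\rho)$ satisfies
\begin{align*}
    \left|\bar w - \tfrac12\right| = \left|\frac{1}{1+\rho} - \frac12\right| = \frac{|1-\rho|}{2(1+\rho)} < \frac{\eta}{2(1+\rho)} < \frac{\eta}{2(1-\eta)},
\end{align*}
using $\rho > 1-\eta$. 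The final step is a perturbation argument: with the cross term $A_{LR}$ present but bounded by $\varepsilon(\mu)$, the true minimizer $w^\star$ of the full quadratic converges to $\bar w$ as $\mu\to\infty$ (the coefficients of the quadratic depend continuously on $A_{LR}$ and the leading coefficient $A_{LL}+A_{RR}-2A_{LR}$ stays bounded away from $0$), so for $\mu$ large enough $|w^\star - \bar w|$ is smaller than the slack between the strict inequality $|\bar w - 1/2| < \eta/(2(1-\eta))$, giving the claim.

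\textbf{Main obstacle.} The delicate point is Step 1 — showing the cross term genuinely vanishes. The Stein kernel \eqref{eq:langevin_stein_kernel} contains terms like $\langle s_p(\bx), \nabla_{\bx'}k(\bx,\bx')\rangle$ and $\langle s_p(\bx),s_p(\bx')\rangle k(\bx,\bx')$; the score $s_p$ of a mixture does \emph{not} decay (it tends to the score of the nearer component, which is typically linearly growing for Gaussian-like modes), so one cannot rely on $s_p$ being small far away — one must use that $\bz_L,\bz_R$ lie in \emph{fixed compact sets} (radius $r$, independent of $\mu$) where $\|s_p\|_2$ is bounded by some constant $C$, while the kernel factor and its derivatives, evaluated at the separation $\geq 2\mu-2r$, vanish. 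So the bound is $|k_p(\bz_L,\bz_R)| \leq C'(|\phi(z_\mu)| + |\phi'(z_\mu)| + |\phi''(z_\mu)|)$ with $z_\mu = (2\mu-2r)/\ell \to\infty$ and $C'$ depending only on $\ell, d, r, \sup\|s_p\|$ over the compacts — this is where the $\phi,\phi',\phi''\to 0$ hypotheses are used, and care is needed that the polynomial prefactors on $\phi',\phi''$ (which are themselves bounded on the compacts, being functions of $\bz_L-\bz_R$ restricted there... actually they grow like $\mu$) are controlled; I would handle this by noting these prefactors are $O(\mu)$ while, for the radial kernels of interest, $\phi',\phi''$ decay fast enough (e.g. the IMQ $\phi$ and its derivatives decay polynomially with exponent making the product vanish), or by strengthening the hypothesis to $z\phi'(z),z^2\phi''(z)\to 0$, which is the natural fix and holds for IMQ.
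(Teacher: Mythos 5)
Your overall strategy — expand $\KSD^2(\bP,\bQ_w)$ as a quadratic in $w$, send the cross term to zero as $\mu\to\infty$, then bound the minimizer using Assumption 2.2 — is exactly the paper's. Two comments, the first on your stated ``main obstacle.'' You worry that the prefactors multiplying $\phi'$ and $\phi''$ grow like $\mu$ and you propose strengthening the hypothesis to $z\phi'(z),z^2\phi''(z)\to 0$. This is unnecessary, and the reason you did not see it is that you never wrote out the Stein kernel explicitly for a \emph{radial} kernel. Doing so (the paper isolates this as a lemma) gives
\begin{align*}
    k_p(\bx,\by) = \frac{1-d}{\ell\,r}\,\phi'\!\Big(\frac{r}{\ell}\Big) - \frac{1}{\ell^2}\,\phi''\!\Big(\frac{r}{\ell}\Big) - \big(s_p(\bx)-s_p(\by)\big)\cdot\frac{\bx-\by}{r}\,\frac{\phi'(r/\ell)}{\ell} + \big(s_p(\bx)\cdot s_p(\by)\big)\,\phi\!\Big(\frac{r}{\ell}\Big),
\end{align*}
with $r=\|\bx-\by\|_2$. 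Every appearance of $\bx-\by$ is normalized to a unit vector: the first term even carries an extra $1/r$, the second is a constant, and in the third the inner product with $(\bx-\by)/r$ is controlled by Cauchy--Schwarz using the score bound $s_0$ on the compact supports. So the prefactors of $\phi,\phi',\phi''$ are bounded by constants depending only on $d,\ell,s_0$ — they do not grow with $\mu$ — and the hypothesis $\phi,\phi',\phi''\to 0$ as stated is already sufficient for $\Delta_{L,R}\to 0$. Second, your Step 2 first minimizes the truncated quadratic and then argues by perturbation; this can be made to work (you would need to observe that $|\bar w - 1/2| < \eta/(2(2-\eta))$ strictly, leaving a $\mu$-uniform margin below $\eta/(2(1-\eta))$), but the paper is more direct: it writes down the exact minimizer $w^\star = (A_R-\Delta_{L,R})/(A_L+A_R-2\Delta_{L,R})$ of the \emph{full} quadratic, rearranges to $w^\star - \tfrac12 = \tfrac12(1-\rho)/\big(2(1-\Delta_{L,R}/A_R)+(\rho-1)\big)$, and for $\mu$ large enough that $\Delta_{L,R} < A_R/2$ applies the triangle inequality to lower-bound the denominator by $1-\eta$, yielding the claimed bound in one step with no limiting argument on the minimizer itself.
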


\vspace*{1mm}
\renewcommand\thetheorem{1}
\begin{lemma} \label{lemma_kp}
    If $k_p$ is the Stein kernel associated with the radial kernel $k(\bx, \by) = \phi(\|\bx - \by\|_2/\ell)$, where $\ell > 0$, $\phi \in \mathcal{C}^2(\mathbb{R})$, and $\bx, \by \in \mathbb{R}^d$ such that $s_p(\bx), s_p(\by) < s_0$, then we have
    \begin{align*}
        |k_p(\bx, \by)| \leq &\frac{d-1}{\ell \|\bx - \by\|_2} \phi'(\|\bx - \by\|_2/\ell) + \frac{1}{\ell^2} \phi''(\|\bx-\by\|_2/\ell) + \frac{2 s_0}{\ell} \phi'(\|\bx-\by\|_2/\ell) \\ &+ s_0^2 \phi(\|\bx-\by\|_2/\ell).
    \end{align*}
\end{lemma}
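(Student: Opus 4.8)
The plan is to expand the Langevin Stein kernel $k_p(\bx,\by)$ from its definition in \eqref{eq:langevin_stein_kernel} and bound each of the four terms, using the radial structure $k(\bx,\by)=\phi(\|\bx-\by\|_2/\ell)$ together with the score bound $\|s_p(\bx)\|_2, \|s_p(\by)\|_2 \le s_0$. First I would compute the gradients of $k$. Writing $r = \|\bx-\by\|_2$ and $u = r/\ell$, we have $\nabla_{\bx} k(\bx,\by) = \frac{1}{\ell}\phi'(u)\frac{\bx-\by}{r}$ and $\nabla_{\by} k(\bx,\by) = -\frac{1}{\ell}\phi'(u)\frac{\bx-\by}{r}$, so $\|\nabla_{\bx} k(\bx,\by)\|_2 = \frac{1}{\ell}|\phi'(u)|$ and likewise for $\nabla_{\by}$. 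For the mixed second derivative term $\langle \nabla_\bx, \nabla_{\by} k(\bx,\by)\rangle = -\sum_{i} \partial_{x^{(i)}}\partial_{y^{(i)}} k$, a direct computation gives $\langle \nabla_\bx, \nabla_{\by} k(\bx,\by)\rangle = \frac{d-1}{\ell r}\phi'(u) + \frac{1}{\ell^2}\phi''(u)$; this is the standard identity for the Laplacian-type contraction of a radial kernel (the $d-1$ coming from the angular part and the $\phi''$ from the radial part). I would present this computation in a couple of lines.

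Next I would assemble the bound by the triangle inequality on the four summands of $k_p$: the first term contributes $\left|\frac{d-1}{\ell r}\phi'(u) + \frac{1}{\ell^2}\phi''(u)\right| \le \frac{d-1}{\ell r}|\phi'(u)| + \frac{1}{\ell^2}|\phi''(u)|$; the two cross terms $\langle s_p(\bx), \nabla_{\by} k\rangle$ and $\langle s_p(\by), \nabla_{\bx} k\rangle$ are each bounded by Cauchy--Schwarz by $\|s_p\|_2 \cdot \frac{1}{\ell}|\phi'(u)| \le \frac{s_0}{\ell}|\phi'(u)|$, giving $\frac{2s_0}{\ell}|\phi'(u)|$ in total; and the last term $\langle s_p(\bx), s_p(\by)\rangle k(\bx,\by)$ is bounded by $\|s_p(\bx)\|_2\|s_p(\by)\|_2 |\phi(u)| \le s_0^2 |\phi(u)|$. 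Summing these four contributions yields exactly the claimed inequality. (Since the statement writes $\phi'$ and $\phi''$ without absolute values, I would either insert $|\cdot|$ or note that the bound is stated for the absolute value of $k_p$ and absorb signs accordingly; I would follow the paper's convention.)

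The only genuinely nontrivial step is the identity $\langle \nabla_\bx, \nabla_{\by} k(\bx,\by)\rangle = \frac{d-1}{\ell r}\phi'(u) + \frac{1}{\ell^2}\phi''(u)$, so the main obstacle is just carefully differentiating $\phi(\|\bx-\by\|_2/\ell)$ twice and handling the $\bx=\by$ / $r\to 0$ behavior. For $r>0$ the chain rule gives $\partial_{y^{(i)}} k = -\frac{1}{\ell}\phi'(u)\frac{x^{(i)}-y^{(i)}}{r}$, and then $\partial_{x^{(i)}}$ of that produces a $\phi''$ term (from differentiating $\phi'(u)$, contributing $\frac{1}{\ell^2}\phi''(u)\frac{(x^{(i)}-y^{(i)})^2}{r^2}$) plus a $\phi'$ term (from differentiating $\frac{x^{(i)}-y^{(i)}}{r}$, contributing $\frac{1}{\ell}\phi'(u)\big(\frac{1}{r} - \frac{(x^{(i)}-y^{(i)})^2}{r^3}\big)$); summing over $i$ and using $\sum_i (x^{(i)}-y^{(i)})^2 = r^2$ collapses this to $-\big(\frac{d-1}{\ell r}\phi'(u) + \frac{1}{\ell^2}\phi''(u)\big)$, and the overall minus sign from $\langle\nabla_\bx,\nabla_\by\rangle = -\sum_i \partial_{x^{(i)}}\partial_{y^{(i)}}$ gives the stated expression. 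Everything else is Cauchy--Schwarz and the triangle inequality, so the proof is short.
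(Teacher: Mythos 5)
Your approach — expand the Langevin Stein kernel explicitly for the radial kernel $\phi(\|\bx-\by\|_2/\ell)$, then bound the four terms via Cauchy--Schwarz and the triangle inequality — is the same as the paper's. One small quibble: the convention $\langle \nabla_\bx, \nabla_{\by} k\rangle = -\sum_i \partial_{x^{(i)}}\partial_{y^{(i)}} k$ you invoke is nonstandard (the paper, matching the closed form it derives for the IMQ kernel, takes $\langle \nabla_\bx, \nabla_{\by} k\rangle = \sum_i \partial_{x^{(i)}}\partial_{y^{(i)}} k = -\tfrac{d-1}{\ell r}\phi'(u) - \tfrac{1}{\ell^2}\phi''(u)$), but since the lemma concerns $|k_p|$ and you already flag that the stated bound implicitly needs absolute values on $\phi', \phi''$, this sign discrepancy does not affect the result.
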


\begin{proof}[Proof of Theorem \ref{app:thm_modes}]
    We consider the mixture distributions $p$ and $q$ satisfying Assumption \ref{def_bimodal}, for $\mu >0$ and $r > 0$, and assume that Assumption \ref{assumption_epsilon} is satisfied for $\eta \in (0,1)$.
    More precisely, we denote by $q_L$ the distribution of the left mode of the mixture $q$, and similarly, $q_R$ is the distribution of the right mode of $q$. The probability measures $\bQ_L$ and $\bQ_R$ respectively admits the densities $q_L$ and $q_R$.
    
    By definition of the KSD, we can write
    \begin{align*}
        \KSD^2(\bP, \bQ_w) = \int k_p(\bx,\bx') q(\bx) q(\bx') d\bx d\bx'.
    \end{align*}
    Additionally, given the above notations, $q$ takes the form $q = w q_L + (1-w) q_R$. Then, we can develop the KSD expression to get
    \begin{align*}
        \KSD^2(\bP, \bQ_w) = &\int k_p(\bx,\bx') (w q_L(\bx) + (1-w) q_R(\bx)) (w q_L(\bx') + (1-w) q_R(\bx')) d\bx d\bx' \\
        = &w^2 \int k_p(\bx,\bx')q_L(\bx)q_L(\bx')d\bx d\bx' + (1-w)^2 \int k_p(\bx,\bx')q_R(\bx)q_R(\bx')d\bx d\bx' \\ & + 2w(1-w)\int k_p(\bx,\bx')q_L(\bx)q_R(\bx')d\bx d\bx',
    \end{align*}
    where the last term follows from the symmetry of $k_p$. Finally, we have
    \begin{align*}
         \KSD^2(\bP, \bQ_w) = & w^2 \KSD^2(\bP, \bQ_L) + (1-w)^2 \KSD^2(\bP, \bQ_R) \\ & + 2w(1-w) \int k_p(\bx,\bx')q_L(\bx)q_R(\bx')d\bx d\bx',
    \end{align*}
    and we denote by $\Delta_{L,R}$ the last term of this equation, which now writes
    \begin{align} \label{eq:KSD}
         \KSD^2(\bP, \bQ_w) = & w^2 \KSD^2(\bP, \bQ_L) + (1-w)^2 \KSD^2(\bP, \bQ_R) + 2w(1-w) \Delta_{L,R}.
    \end{align}
    We first focus on the last term $\Delta_{L,R}$ of this expression, which can be shown to be arbitrarily small when $\mu$ gets large.
    According to Assumption \ref{def_bimodal}, the distance between the centers of the two modes is $2\mu$, and both $q_L$ and $q_R$ have a compact support included in a ball of radius $r$. 
    Consequently, for $\bx, \bx' \in \Rd$ such that $q_L(\bx) > 0$ and $q_R(\bx') > 0$, then $\|\bx - \bx'\|_2 > 2 (\mu - r)$. Additionally, since the score $s_p$ is continuous, $s_p$ is bounded on a compact set, and it exists $s_0 > 0$ such that $s_p(\bx) < s_0$ and $s_p(\bx') < s_0$.
    Then, from Lemma \ref{lemma_kp}, we have
        \begin{align*}
        |k_p(\bx, \bx')| \leq &\frac{d-1}{2 \ell (\mu - r)} \phi'(\|\bx - \bx'\|_2/\ell) + \frac{1}{\ell^2} \phi''(\|\bx-\bx'\|_2/\ell) + \frac{2 s_0}{\ell} \phi'(\|\bx-\bx'\|_2/\ell) \\ &+ s_0^2 \phi(\|\bx-\bx'\|_2/\ell),
    \end{align*}
    and since $q_L(\bx)q_R(\bx') = 0$ for $\|\bx - \bx'\|_2 < 2 (\mu - r)$, we get
    \begin{align*}
        \Delta_{L,R} \leq \sup_{z > 2 (\mu - r) / \ell} \Big\{\frac{d-1}{2 \ell (\mu - r)} \phi'(z) + \frac{1}{\ell^2} \phi''(z) + \frac{2 s_0}{\ell}  \phi'(z) + s_0^2 \phi(z)\Big\}.
    \end{align*}
    By assumption, $\phi(z) \rightarrow 0$, $\phi'(z) \rightarrow 0$, and $\phi''(z) \rightarrow 0$ for $z \to \infty$, and then, we have
    \begin{align*}
        \lim \limits_{\mu \to \infty} \Delta_{L,R} = 0.
    \end{align*}

    Next, we reorder the terms of equation (\ref{eq:KSD}) to get a second-order polynomial in $w$ as follows
    \begin{align*}
         \KSD^2(\bP, \bQ_w) = & w^2 \big[\KSD^2(\bP, \bQ_L) + \KSD^2(\bP, \bQ_R) - 2\Delta_{L,R} \big] \\ & - 2 w \big[\KSD^2(\bP, \bQ_R) - \Delta_{L,R}\big] + \KSD^2(\bP, \bQ_R).
    \end{align*}
    Notice that the coefficient of $w^2$ is $\KSD^2(\bP, \bQ_{1/2})/4$, and is therefore positive. Then, $\KSD^2(\bP, \bQ_w)$ admits a unique minimum with respect to $w$, given by
    \begin{align*}
        w^{\star} = \frac{\KSD^2(\bP, \bQ_R) - \Delta_{L,R}}{\KSD^2(\bP, \bQ_L) + \KSD^2(\bP, \bQ_R) - 2\Delta_{L,R}}.
    \end{align*}
    We rewrite $w^{\star}$ as follows,
        \begin{align*}
        w^{\star} &= \frac{1/2 \KSD^2(\bP, \bQ_R) + 1/2 \KSD^2(\bP, \bQ_L) - \Delta_{L,R} + 1/2 \KSD^2(\bP, \bQ_R) - 1/2 \KSD^2(\bP, \bQ_L) }{\KSD^2(\bP, \bQ_L) + \KSD^2(\bP, \bQ_R) - 2\Delta_{L,R}} \\
        &= \frac{1}{2} + \frac{1}{2} \frac{\KSD^2(\bP, \bQ_R) - \KSD^2(\bP, \bQ_L) }{\KSD^2(\bP, \bQ_L) + \KSD^2(\bP, \bQ_R) - 2\Delta_{L,R}} \\
        &= \frac{1}{2} + \frac{1}{2} \frac{1 - \KSD^2(\bP, \bQ_L)/\KSD^2(\bP, \bQ_R)}{1 + \KSD^2(\bP, \bQ_L)/\KSD^2(\bP, \bQ_R) - 2\Delta_{L,R}/\KSD^2(\bP, \bQ_R)} \\
        &= \frac{1}{2} + \frac{1}{2} \frac{1 - \KSD^2(\bP, \bQ_L)/\KSD^2(\bP, \bQ_R)}{2(1 - \Delta_{L,R}/\KSD^2(\bP, \bQ_R)) + (\KSD^2(\bP, \bQ_L)/\KSD^2(\bP, \bQ_R) -1)}.
    \end{align*}
    We can deduce the following bound
    \begin{align} \label{ineq_w}
        \left|w^{\star} - \frac{1}{2} \right| \leq  \frac{1}{2} \frac{|\KSD^2(\bP, \bQ_L)/\KSD^2(\bP, \bQ_R) - 1|}{|2(1 - \Delta_{L,R}/\KSD^2(\bP, \bQ_R)) + (\KSD^2(\bP, \bQ_L)/\KSD^2(\bP, \bQ_R) -1)|}.
    \end{align}
    According to Assumption \ref{assumption_epsilon}, with $0 < \eta < 1$,
    \begin{align*}
        \left| \frac{\KSD^2(\bP, \bQ_L)}{\KSD^2(\bP, \bQ_R)} - 1 \right| < \eta,
    \end{align*}
    which gives an upper bound for the numerator of the right hand side of inequality (\ref{ineq_w}).
    Additionally, for $\mu$ large enough, $\Delta_{R,L}$ is arbitrarily small, and in particular, we can have $\Delta_{R,L} < \KSD^2(\bP, \bQ_R)/2$, and then $2(1 - \Delta_{L,R}/\KSD^2(\bP, \bQ_R)) > 1$. Next, we use the triangle inequality to get
    \begin{align*}
        |2(1 - \Delta_{L,R}/\KSD^2(\bP, \bQ_R)) &+ (\KSD^2(\bP, \bQ_L)/\KSD^2(\bP, \bQ_R) -1)| \\ &\geq 2(1 - \Delta_{L,R}/\KSD^2(\bP, \bQ_R)) - |\KSD^2(\bP, \bQ_L)/\KSD^2(\bP, \bQ_R) -1| \\
        &\geq 1 - \eta,
    \end{align*}
    where the last inequality is obtained using $2(1 - \Delta_{L,R}/\KSD^2(\bP, \bQ_R)) > 1$ for $\mu$ large enough, and Assumption \ref{assumption_epsilon} again.
    Finally, this lower bound on the denominator and the upper bound on the numerator combined with inequality (\ref{ineq_w}) give
    \begin{align*}
        \left|w^{\star} - \frac{1}{2} \right| < \frac{\eta}{2(1  - \eta)}.
    \end{align*}
\end{proof}

\begin{proof}[Proof of Lemma \ref{lemma_kp}]
    We consider $\bx, \by \in \Rd$, such that $\bx \neq \by$, $s_p(\bx) < s_0$ and $s_p(\by) < s_0$.
    From Equation ($2$) of the main article, we derive the Stein kernel obtained for a radial kernel $\phi(\|\bx - \by\|_2/\ell)$, where $\ell > 0$ and $\phi \in \mathcal{C}^2(\mathbb{R})$, and get 
    \begin{align*}
        k_p(\bx, \by) = &\frac{1-d}{\ell \|\bx - \by\|_2} \phi'(\|\bx - \by\|_2/\ell) -\frac{1}{\ell^2} \phi''(\|\bx - \by\|_2/\ell) \\ &- (s_p(\bx)-s_p(\by))\cdot(\bx-\by) \frac{\phi'(\|\bx - \by\|_2/\ell)}{\ell \|\bx - \by\|_2} + (s_p(\bx) \cdot s_p(\by)) \phi(\|\bx - \by\|_2/\ell).
    \end{align*}
    Using Cauchy-Schwartz inequality, we have $s_p(\bx) \cdot s_p(\by) \leq s_0^2$, and
    \begin{align*}
        &\frac{(s_p(\bx)-s_p(\by))\cdot(\bx-\by)}{\ell \|\bx - \by\|_2} \leq \frac{2 s_0}{\ell}.
    \end{align*}
    Overall, we obtain the following bound
    \begin{align*}
        |k_p(\bx, \by)| \leq &\frac{1-d}{\ell \|\bx - \by\|_2} \phi'(\|\bx - \by\|_2/\ell) + \frac{1}{\ell^2} \phi''(\|\bx - \by\|_2/\ell) + \frac{2 s_0}{\ell} \phi'(\|\bx - \by\|_2/\ell) \\ &+ s_0^2 \phi(\|\bx - \by\|_2/\ell).
    \end{align*}
\end{proof}

\section{Proofs of Theorem \ref{thm_fly}, Corollary \ref{corollary_saddle}, and Corollary \ref{corollary_fly}} \label{app:ksd_pathologies}

\renewcommand\thetheorem{2.4}
\begin{theorem}[KSD spurious minimum]
    Let $k_p$ be the Stein kernel associated with the IMQ kernel with $\ell > 0$, $\beta \in (0,1)$, and $c = 1$. Let $\smash{\{\bx_i\}_{i=1}^m \subset \mathcal{M}_{s_0} = \{\bx \in \Rd : \|s_p(\bx)\|_2 \leq s_0 \}}$ be a fixed set of points of empirical measure $\smash{\bQ_m = \frac{1}{m} \sum_{i=1}^{m} \delta(\bx_i)}$, with $s_0 \geq 0$ and $m \geq 2$. We have $\smash{\KSD^2\big(\bP, \bQ_m\big) < \E[\KSD^2\big(\bP, \bP_m\big)]}$, if the score threshold $s_0$  and the sample size $m$ are small enough to satisfy $\smash{m < 1 + (\E[\|s_p(\bX)\|_2^2] - s_0^2) / (2\beta d /\ell^2 + 2\beta s_0/\ell + s_0^2)}$.
\end{theorem}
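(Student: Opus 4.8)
The plan is to write both $\KSD^2(\bP,\bQ_m)=\tfrac1{m^2}\sum_{i,j} k_p(\bx_i,\bx_j)$ and $\KSD^2(\bP,\bP_m)=\tfrac1{m^2}\sum_{i,j} k_p(\bX_i,\bX_j)$ as V-statistics of the Stein kernel, split each into its diagonal part ($i=j$) and off-diagonal part ($i\neq j$), control the four pieces, and rearrange. The key point is that the off-diagonal part of the right-hand side vanishes in expectation by Stein's identity, so that the bias $\E[\KSD^2(\bP,\bP_m)]$ is produced entirely by the diagonal, of size $\Theta(1/m)$; whereas on the left-hand side both parts are uniformly controlled over $\bM$ because membership in $\bM$ bounds $\|s_p\|_2$.

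First I would record two elementary facts about $k_p$ for the IMQ kernel $k(\bx,\bx')=(1+\|\bx-\bx'\|_2^2/\ell^2)^{-\beta}$. On the diagonal, since the gradient of a radial kernel vanishes at coincident points, the two mixed terms of $k_p$ drop out and a short computation of $\langle\nabla_\bx,\nabla_{\bx'}k\rangle$ at $\bx=\bx'$ gives the exact identity $k_p(\bx,\bx)=2\beta d/\ell^2+\|s_p(\bx)\|_2^2$, hence $k_p(\bx,\bx)\le 2\beta d/\ell^2+s_0^2=:A$ for $\bx\in\bM$. Off the diagonal, writing $r=\|\bx-\bx'\|_2$ and $v=r/\ell$, one has $\langle\nabla_\bx,\nabla_{\bx'}k\rangle=\tfrac{1-d}{\ell r}\phi'(v)-\tfrac1{\ell^2}\phi''(v)$, which for the IMQ profile $\phi(z)=(1+z^2)^{-\beta}$ is $\le 2\beta d/\ell^2$ (keeping signs, it equals $\tfrac{2\beta(d-1)}{\ell^2(1+v^2)^{\beta+1}}+\tfrac{2\beta[1-(2\beta+1)v^2]}{\ell^2(1+v^2)^{\beta+2}}$, and each summand is at most its value at $v=0$); the two mixed terms sum to $-\tfrac{\langle s_p(\bx)-s_p(\bx'),\bx-\bx'\rangle}{\ell r}\phi'(v)$, bounded by Cauchy--Schwarz, $\|s_p\|_2\le s_0$ and $\sup_{v\ge 0} v/(1+v^2)^{\beta+1}\le\sup_{v\ge 0} v/(1+v^2)=1/2$, giving $\le 2\beta s_0/\ell$; and $\langle s_p(\bx),s_p(\bx')\rangle k\le s_0^2$ since $k\le 1$ and $\|s_p\|_2\le s_0$. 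Hence $k_p(\bx,\bx')\le 2\beta d/\ell^2+2\beta s_0/\ell+s_0^2=:B$ for all $\bx,\bx'\in\bM$.

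Then I would assemble the two sides. For the iid sample, independence and Stein's identity give $\E[k_p(\bX_i,\bX_j)]=\E_{\bX,\bX'\sim\bP}[k_p(\bX,\bX')]=0$ for $i\neq j$ (this is exactly $\KSD^2(\bP,\bP)=0$, valid under the standing assumptions $p\in\mathcal{C}^1$ and $\E[\|s_p(\bX)\|_2^2]<\infty$), while $\E[k_p(\bX_i,\bX_i)]=2\beta d/\ell^2+\E[\|s_p(\bX)\|_2^2]$ by the diagonal identity; so $\E[\KSD^2(\bP,\bP_m)]=\tfrac1m\big(2\beta d/\ell^2+\E[\|s_p(\bX)\|_2^2]\big)$. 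For $\bQ_m$, the $m(m-1)$ off-diagonal terms are each $\le B$ and the $m$ diagonal terms each $\le A$, so $\KSD^2(\bP,\bQ_m)\le\tfrac{m-1}{m}B+\tfrac1m A$. Requiring $\tfrac{m-1}{m}B+\tfrac1m A<\tfrac1m\big(2\beta d/\ell^2+\E[\|s_p(\bX)\|_2^2]\big)$ and using $A-2\beta d/\ell^2=s_0^2$ reduces this to $(m-1)B<\E[\|s_p(\bX)\|_2^2]-s_0^2$, i.e. to the stated threshold $m<1+(\E[\|s_p(\bX)\|_2^2]-s_0^2)/(2\beta d/\ell^2+2\beta s_0/\ell+s_0^2)$.

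The argument is mostly bookkeeping, but two points want care: (i) the Stein identity $\E_{\bX,\bX'\sim\bP}[k_p(\bX,\bX')]=0$, the integration-by-parts fact underlying the KSD construction, which one should cite rather than reprove; and (ii) landing the off-diagonal bound exactly on $B$ — this relies on keeping (not taking absolute values of) the negative $\phi''$ and cross-term contributions and on the sharp bound $v/(1+v^2)\le 1/2$, and on treating the diagonal with the finer constant $A<B$ so that the $1/m$ term on the left matches the bias on the right without slack.
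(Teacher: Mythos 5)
Your proof is correct and follows essentially the same route as the paper's: you establish the diagonal identity $k_p(\bx,\bx)=2\beta d/\ell^2+\|s_p(\bx)\|_2^2$, use Stein's identity to kill the off-diagonal expectation for the iid sample, bound $k_p(\bx,\bx')$ on $\bM$ by $2\beta d/\ell^2+2\beta s_0/\ell+s_0^2$ (this is exactly the paper's Lemma~2), and rearrange. The only cosmetic difference is that you package the diagonal/off-diagonal bounds as two constants $A$ and $B$ while the paper carries the diagonal term exactly; the arithmetic and the resulting threshold coincide.
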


\renewcommand\thetheorem{2.5}
\begin{corollary}[Low KSD samples at density minimum]
    Let $k_p$ be the Stein kernel associated with the IMQ kernel with $\ell > 0$, $\smash{\beta \in (0,1)}$, and $c = 1$.
    Let $p$ be a density with at least one local minimum or saddle point. For $\smash{m \geq 2}$, if $\smash{\{\bx_i\}_{i=1}^m \subset \Rd}$ is a set of points, all located at local minimum or saddle points of $p$, then we have
    $\smash{\KSD^2\big(\bP, \bQ_m\big) < \E[\KSD^2\big(\bP, \bP_m\big)]}$, if $\smash{m < 1 + \frac{\ell^2}{2\beta d} \E[\|s_p(\bX)\|_2^2]}$.
\end{corollary}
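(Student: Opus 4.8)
The plan is to obtain Corollary \ref{corollary_saddle} as an immediate specialization of Theorem \ref{thm_fly} to the score threshold $s_0 = 0$. First I would observe that if $\bx_0$ is a local minimum or a saddle point of $p$, then $\bx_0$ is a stationary point of $p$, so $\nabla p(\bx_0) = \mathbf{0}$; since $p$ is a strictly positive, continuously differentiable density on $\Rd$, this gives $s_p(\bx_0) = \nabla \log p(\bx_0) = \nabla p(\bx_0)/p(\bx_0) = \mathbf{0}$, hence $\|s_p(\bx_0)\|_2 = 0$. Consequently, any fixed set $\{\bx_i\}_{i=1}^m$ whose points all lie at local minima or saddle points of $p$ satisfies $\{\bx_i\}_{i=1}^m \subset \mathcal{M}_0 = \{\bx \in \Rd : \|s_p(\bx)\|_2 \leq 0\}$, so the hypotheses of Theorem \ref{thm_fly} are met with $s_0 = 0$ and $m \geq 2$.

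Next I would simply plug $s_0 = 0$ into the sufficient condition of Theorem \ref{thm_fly}. The bound $m < 1 + (\E[\|s_p(\bX)\|_2^2] - s_0^2)/(2\beta d/\ell^2 + 2\beta s_0/\ell + s_0^2)$ collapses, after substituting $s_0 = 0$, to
\[
m < 1 + \frac{\E[\|s_p(\bX)\|_2^2]}{2\beta d/\ell^2} = 1 + \frac{\ell^2}{2\beta d}\,\E[\|s_p(\bX)\|_2^2],
\]
which is exactly the hypothesis of the corollary. Theorem \ref{thm_fly} then yields $\KSD^2(\bP, \bQ_m) < \E[\KSD^2(\bP, \bP_m)]$, which is the conclusion.

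There is essentially no genuine obstacle here: the only things to verify are that the score vanishes at stationary points (this is the sole role of the hypothesis ``local minimum or saddle point'', and it is where strict positivity and $\mathcal{C}^1$-regularity of $p$ are used so that $\log p$ and its gradient are well defined at $\bx_0$), and that the standing integrability assumption $\E[\|s_p(\bX)\|_2^2] < \infty$ ensures the right-hand side above is finite, so the range of admissible $m$ is non-empty whenever $\E[\|s_p(\bX)\|_2^2] > 0$. All the analytic work has already been carried out in the proof of Theorem \ref{thm_fly}; the corollary is just the observation that ``density minimum or saddle point'' is the extreme case $s_0 = 0$ of ``region of low score norm''.
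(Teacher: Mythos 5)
Your proof is correct and matches the paper's: both observe that local minima and saddle points are stationary points where $s_p$ vanishes, so the sample lies in $\mathcal{M}_0$, and then apply Theorem \ref{thm_fly} with $s_0 = 0$ to obtain the stated bound $m < 1 + \tfrac{\ell^2}{2\beta d}\E[\|s_p(\bX)\|_2^2]$. Your version just spells out the substitution and the regularity justifications a bit more explicitly than the paper does.
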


\renewcommand\thetheorem{2.6}
\begin{corollary}[KSD spurious minimum for Gaussian mixtures]
    Let $k_p$ be the Stein kernel associated with the IMQ kernel with $\smash{\ell > 0}$, $\smash{\beta \in (0,1)}$, and $c = 1$.
    Let the density $p$ be a Gaussian mixture model of two components with equal weights, respectively centered in $\smash{(-\mu,\mathbf{0}_{d-1})}$ and $\smash{(\mu, \mathbf{0}_{d-1})}$, of variance $\sigma^2 \mathbf{I_d}$, and let $\nu = \mu/\sigma$. If $\nu > 1$ and $\smash{0 \leq s_0 < \big[\nu \sqrt{\nu^2 - 1} -\ln(\nu + \sqrt{\nu^2 - 1})\big]/\mu}$, then for any $\smash{\{\bx_i\}_{i=1}^m \subset \mathcal{M}_{s_0}}$ of empirical measure $\bQ_m$, we have \\    
    (i) $\KSD^2\big(\bP, \bQ_m\big) < \E[\KSD^2\big(\bP, \bP_m\big)]$ if $m$ and $s_0$ satisfy $m < 1 + \frac{\E[\|s_p(\bX)\|_2^2] - s_0^2}{2\beta d /\ell^2 + 2\beta s_0/\ell + s_0^2}$, \\    
    (ii) there exists three disjoint intervals $I_{-\mu}, I_{0}, I_{\mu} \subset \mathbb{R}$, respectively centered around $-\mu$, $0$, and $\mu$, such that $\smash{x_1^{(1)}, \hdots, x_m^{(1)} \in I_{-\mu} \cup I_{0} \cup I_{\mu}}$.
\end{corollary}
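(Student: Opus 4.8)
The plan is to treat the two claims separately. Claim (i) is immediate: the hypotheses of the corollary (IMQ kernel with $\ell>0$, $\beta\in(0,1)$, $c=1$; the points lying in $\mathcal{M}_{s_0}$ with $s_0\ge 0$; $m\ge 2$; and the displayed bound on $m$) are exactly those of Theorem~\ref{thm_fly} applied to this particular density $p$, so (i) follows by invoking Theorem~\ref{thm_fly}, with nothing further to prove. For claim (ii) I would reduce to a one-dimensional analysis. Since the two centers differ only in the first coordinate and the weights are equal, $p$ factorizes as $p(\bx)=p_1(x^{(1)})\prod_{j=2}^d\varphi_\sigma(x^{(j)})$ with $\varphi_\sigma$ the $\mathcal{N}(0,\sigma^2)$ density and $p_1(t)=\tfrac12\varphi_\sigma(t+\mu)+\tfrac12\varphi_\sigma(t-\mu)$; hence the score separates, $s_p^{(1)}(\bx)=(\log p_1)'(x^{(1)})$ and $s_p^{(j)}(\bx)=-x^{(j)}/\sigma^2$ for $j\ge 2$. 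Using $\varphi_\sigma'(u)=-(u/\sigma^2)\varphi_\sigma(u)$ and $\varphi_\sigma(t+\mu)/\varphi_\sigma(t-\mu)=e^{-2\mu t/\sigma^2}$ one gets the closed form
\[
  (\log p_1)'(t)=-\tfrac{1}{\sigma^2}\,h(t),\qquad h(t):=t-\mu\tanh\!\big(\mu t/\sigma^2\big).
\]
Because $\|s_p(\bx)\|_2\ge|s_p^{(1)}(\bx)|$, every $\bx\in\mathcal{M}_{s_0}$ satisfies $|h(x^{(1)})|\le s_0\sigma^2$, so it suffices to show the sublevel set $S:=\{t\in\mathbb{R}:|h(t)|\le s_0\sigma^2\}$ is a union of three disjoint intervals (around the three stationary points of $p_1$), and then observe $x_i^{(1)}\in S$ for all $i$.

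The second step is to analyze $h$, which is odd with $h(0)=0$. Its derivative $h'(t)=1-\nu^2\operatorname{sech}^2(\mu t/\sigma^2)$ is negative at $0$ (since $\nu>1$), strictly increasing on $(0,\infty)$, and vanishes at the unique point $t_1=(\sigma^2/\mu)\ln\!\big(\nu+\sqrt{\nu^2-1}\big)$; hence $h$ decreases on $(0,t_1)$, increases on $(t_1,\infty)$, is negative on $(0,t^\star)$ and positive on $(t^\star,\infty)$, where $t^\star\in(t_1,\mu)$ is the positive mode of $p_1$ (solution of $t=\mu\tanh(\mu t/\sigma^2)$). Consequently, on $[0,\infty)$ the map $|h|$ rises from $0$ to $|h(t_1)|$ on $[0,t_1]$, falls back to $0$ on $[t_1,t^\star]$, then rises again on $[t^\star,\infty)$; so, provided $s_0\sigma^2<|h(t_1)|$, we get $\{t\ge 0:|h(t)|\le s_0\sigma^2\}=[0,t_a]\cup[t_b,t_c]$ with $0<t_a<t_1<t_b<t^\star<t_c$, and by oddness $S=[-t_c,-t_b]\cup[-t_a,t_a]\cup[t_b,t_c]$.

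The third step is to identify the separation condition with the stated one. Evaluating $h$ at $t_1$, with $\tanh\!\big(\ln(\nu+\sqrt{\nu^2-1})\big)=\sqrt{\nu^2-1}/\nu$ and $\mu=\sigma\nu$, yields $|h(t_1)|=(\sigma/\nu)\big[\nu\sqrt{\nu^2-1}-\ln(\nu+\sqrt{\nu^2-1})\big]$, which is precisely $\sigma^2$ times the threshold in the hypothesis; thus the assumption $s_0<[\nu\sqrt{\nu^2-1}-\ln(\nu+\sqrt{\nu^2-1})]/\mu$ is exactly $s_0\sigma^2<|h(t_1)|$, which is what the previous step requires. Taking $I_0=[-t_a,t_a]$ around $0$, $I_{\mu}=[t_b,t_c]$ and $I_{-\mu}=[-t_c,-t_b]$ around the two modes $\pm t^\star$ of $p_1$ (which approach $\pm\mu$ as $\nu$ grows; one may also replace them by the smallest intervals symmetric about $\pm\mu$ containing $[t_b,t_c]$ and $[-t_c,-t_b]$, still disjoint from $I_0$ since $t_a<t_b$) then gives $x_i^{(1)}\in I_{-\mu}\cup I_0\cup I_{\mu}$ for all $i$, proving (ii).

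The main obstacle is this closed-form evaluation of $|h(t_1)|$ and its matching with the stated bound: one must locate the interior critical point $t_1$ of $h$, compute $h(t_1)$ cleanly via the hyperbolic identities, and check that $\nu\sqrt{\nu^2-1}-\ln(\nu+\sqrt{\nu^2-1})>0$ for every $\nu>1$ (writing $\nu=\cosh\theta$ this is $\tfrac12\sinh 2\theta-\theta>0$), which is what makes the hypothesis on $s_0$ nonvacuous and guarantees that the interval around $0$ is genuinely separated from the two mode intervals. The factorization of $p$, the formula for $(\log p_1)'$, and the interval-counting argument are all routine.
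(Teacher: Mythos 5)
Your proposal is correct and takes essentially the same route as the paper: part~(i) is obtained by direct application of Theorem~\ref{thm_fly}, and part~(ii) reduces to a one-dimensional analysis of the first score component, whose squared norm lower-bounds $\|s_p(\bx)\|_2^2$, so that the sublevel constraint localizes $x^{(1)}$ in the sublevel set of $h(t)=t-\mu\tanh(\mu t/\sigma^2)$; locating the interior critical point $t_1=(\sigma^2/\mu)\,\mathrm{arcosh}(\nu)$ of $h$ (equivalently of $s_p^{(1)}(\cdot)^2$), evaluating $|h(t_1)|$ via the identities $\tanh(\mathrm{arcosh}\,\nu)=\sqrt{\nu^2-1}/\nu$ and $\mathrm{arcosh}\,\nu=\ln(\nu+\sqrt{\nu^2-1})$, and matching it to the stated threshold reproduces the paper's argument word for word up to the $-\sigma^2$ rescaling that turns $s_p^{(1)}$ into $h$. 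One welcome refinement in your version is that you note explicitly that the two outer intervals are naturally centered at the modes $\pm t^\star\in(t_1,\mu)$ rather than at $\pm\mu$, and you patch this by enlarging to intervals symmetric about $\pm\mu$; the paper's phrase ``centered around $\pm\mu$'' elides this point.
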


\renewcommand\thetheorem{2}
\begin{lemma} \label{lemma_ksd_bound}
    Let $k_p$ be the Stein kernel associated to the IMQ kernel, with parameters $\ell > 0$, $\beta \in (0,1)$, and $c = 1$. For $s_0 \geq \min_{\bx \in \Rd} \|s(\bx)\|_2$, and $\bx, \by \in \mathcal{M}_{s_0}$, we have
    \begin{align*}
        k_p(\bx,\by) \leq  \frac{2 \beta d}{\ell^2} + \frac{2 \beta s_0}{\ell} + s_0^2.
    \end{align*}
\end{lemma}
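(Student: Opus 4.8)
The plan is to specialize the radial-kernel Stein identity already derived in the proof of Lemma~\ref{lemma_kp} to the IMQ kernel, and then bound the four resulting terms one at a time. Concretely, with $c=1$ the IMQ kernel is the radial kernel $k(\bx,\by)=\phi(\|\bx-\by\|_2/\ell)$ for $\phi(z)=(1+z^2)^{-\beta}$, so that $\phi'(z)=-2\beta z(1+z^2)^{-\beta-1}$ and $\phi''(z)=-2\beta(1+z^2)^{-\beta-1}+4\beta(\beta+1)z^2(1+z^2)^{-\beta-2}$. Substituting these into the expression for $k_p$ obtained in the proof of Lemma~\ref{lemma_kp}, and writing $r=\|\bx-\by\|_2$ and $t=r/\ell$, one gets after cancellation
\begin{align*}
k_p(\bx,\by) &= \frac{2\beta d}{\ell^2}(1+t^2)^{-\beta-1} - \frac{4\beta(\beta+1)r^2}{\ell^4}(1+t^2)^{-\beta-2} \\
&\qquad + \frac{2\beta}{\ell^2}(1+t^2)^{-\beta-1}\big(s_p(\bx)-s_p(\by)\big)\cdot(\bx-\by) + \big(s_p(\bx)\cdot s_p(\by)\big)(1+t^2)^{-\beta}.
\end{align*}

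Then I would bound the three groups of terms separately. For the double-divergence part (the first line), the second term is non-positive and $(1+t^2)^{-\beta-1}\le 1$, so that contribution is at most $2\beta d/\ell^2$. For the score--gradient cross term, Cauchy--Schwarz together with $\bx,\by\in\mathcal{M}_{s_0}$ gives $\big(s_p(\bx)-s_p(\by)\big)\cdot(\bx-\by)\le\big(\|s_p(\bx)\|_2+\|s_p(\by)\|_2\big)r\le 2s_0 r$, so this contribution is at most $\tfrac{4\beta s_0 r}{\ell^2}(1+t^2)^{-\beta-1}=\tfrac{2\beta s_0}{\ell}\cdot\tfrac{2t}{(1+t^2)^{\beta+1}}$; the elementary inequality $\tfrac{2t}{(1+t^2)^{\beta+1}}\le\tfrac{2t}{1+t^2}\le 1$ for all $t\ge 0$ (using $1+t^2\ge 1$ and the AM--GM bound $1+t^2\ge 2t$) then bounds it by $2\beta s_0/\ell$. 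Finally $|s_p(\bx)\cdot s_p(\by)|\le\|s_p(\bx)\|_2\,\|s_p(\by)\|_2\le s_0^2$ and $0<(1+t^2)^{-\beta}\le 1$, so the last term is at most $s_0^2$. Summing the three bounds yields the claim.

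I do not expect a genuine obstacle, since the argument is essentially a computation; the only piece of real content is the inequality $2t\le(1+t^2)^{\beta+1}$ that lets the score--gradient cross term be absorbed into $2\beta s_0/\ell$. Two bookkeeping points remain: the degenerate case $\bx=\by$, where the factors $\phi'(r/\ell)/r$ are read off via $\phi'(z)/z\to-2\beta$ as $z\to0$, so that the identity and all the bounds persist with the cross term vanishing; and the observation that the hypothesis $s_0\ge\min_{\bx}\|s_p(\bx)\|_2$ is needed only to guarantee $\mathcal{M}_{s_0}\neq\emptyset$.
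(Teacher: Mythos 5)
Your proof is correct and follows essentially the same route as the paper's: write out the IMQ Stein kernel explicitly, discard the non-positive $\phi''$ term, bound the decay factors $(1+t^2)^{-\alpha}$ by $1$, and apply Cauchy--Schwarz together with $\|s_p\|_2\le s_0$ on $\mathcal{M}_{s_0}$. The only cosmetic difference is in bounding the cross term's shape factor $t/(1+t^2)^{\beta+1}\le 1/2$: the paper computes the exact supremum $\tfrac{1}{\sqrt{2\beta+1}}\bigl(\tfrac{2\beta+1}{2\beta+2}\bigr)^{\beta+1}$ via a first-order condition and then observes it is at most $1/2$, whereas you obtain the same constant directly from $(1+t^2)^{\beta+1}\ge 1+t^2\ge 2t$, which is a shorter derivation of the identical estimate.
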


\begin{proof}[Proof of Theorem \ref{thm_fly}]
By definition of the kernelized Stein discrepancy between the target distribution $\bP$ and the empirical measure $\bP_m = \frac{1}{m}\sum_{i=1}^{m}\delta(\bX_i)$, we have
\begin{align*}
    \E[\KSD^2(\bP, \bP_m)] & = \frac{1}{m^2} \sum_{i,j=1}^{m} \E[k_p(\bX_i, \bX_j)]
\end{align*}
In what follows, $k_p$ denotes the Stein kernel obtained for the inverse multi-quadratric kernel function, \textit{i.e.},
\begin{align*}
    k_p(\bx,\by) =& -\frac{4 \beta (\beta + 1)}{\ell^4}\|\bx-\by\|_2^2 (1 + \|\bx-\by\|_2^2/\ell^2)^{-\beta - 2} \\
    &+ \frac{2 \beta}{\ell^2}(d + (s_p(\bx)-s_p(\by))\cdot(\bx-\by)) (1 + \|\bx-\by\|_2^2/\ell^2)^{-\beta - 1} \\
    &+ (s_p(\bx)\cdot s_p(\by))(1 + \|\bx-\by\|_2^2/\ell^2)^{-\beta}.
\end{align*}
Given that $\bX_i$ and $\bX_j$ are independent random variables that follow the distribution $\bP$, one has $\E[k_p(\bX_i,\bX_j)] = 0$ for any $i \neq j$. Using this property and the closed-form expression of the Stein kernel $k_p$, it is found that
\begin{align*} 
    \E[\KSD^2(\bP, \bP_m)] &= \frac{1}{m^2} \sum_{i=1}^{m} \E[k_p(\bX_i, \bX_i)] \\
    & = \frac{1}{m} \E[k_p(\bX, \bX)] \\
    &= \frac{2 \beta d}{m \ell^2} + \frac{\E[\|s_p(\bX)\|_2^2]}{m},
\end{align*}
where $\bX \sim \bP$. On the other hand, the kernelized Stein discrepancy between the target $\bP$ and the empirical distribution $\bQ_m = \frac{1}{m}\sum_{i=1}^{m}\delta(\bx_i)$ is given by
\begin{align*}
    \KSD^2(\bP, \bQ_m) & = \frac{1}{m^2} \sum_{i,j=1}^{m} k_p(\bx_i,\bx_j) \\
    &= \frac{1}{m^2} \sum_{i=1}^{m} k_p(\bx_i,\bx_i) + \frac{1}{m^2} \sum_{i\neq j}^{m} k_p(\bx_i,\bx_j) \\
    &= \frac{2 \beta d}{m\ell^2} + \frac{1}{m^2} \sum_{i=1}^{m} \| s_p(\bx_i) \|_2^2 + \frac{1}{m^2} \sum_{i\neq j}^{m} k_p(\bx_i,\bx_j).
\end{align*}
Next, it can be shown that the difference $m \big( \KSD^2(\bP, \bQ_m) - \E[\KSD^2(\bP, \bP_m)] \big)$ takes the form
\begin{align*}
    m \big( \KSD^2(\bP, \bQ_m) - \E[\KSD^2(\bP, \bP_m)] \big) = \frac{1}{m} \sum_{i=1}^{m} \| s_p(\bx_i) \|_2^2 + \frac{1}{m} \sum_{i\neq j}^{m} k_p(\bx_i,\bx_j) - \E[\|s_p(\bX)\|_2^2].
\end{align*}
Since $\bx_i, \bx_j \in \bM$, we can use Lemma \ref{lemma_ksd_bound} to bound the terms $k_p(\bx_i,\bx_j)$, and then obtain
\begin{align*}
    m \big( \KSD^2(\bP, \bQ_m) - \E[\KSD^2(\bP, \bP_m)] \big) \leq s_0^2 + (m-1)\Big(\frac{2\beta d}{\ell^2} + \frac{2\beta s_0c_1}{\ell} + s_0^2 \Big) - \E[\|s_p(\bX)\|_2^2],
\end{align*}
where the right hand side is always negative if
\begin{align*}
    m < 1 + \frac{\E[\|s_p(\bX)\|_2^2] - s_0^2}{2\beta d/\ell^2 + 2\beta s_0/\ell + s_0^2}.
\end{align*}
\end{proof}

\begin{proof}[Proof of Lemma \ref{lemma_ksd_bound}]
    The Stein kernel $k_p$ obtained for the inverse multi-quadratric kernel function, with parameters $\ell > 0$, $\beta \in (0,1)$, and $c = 1$, is given by
    \begin{align*}
        k_p(\bx,\by) =& -\frac{4 \beta (\beta + 1)}{\ell^4}\|\bx-\by\|_2^2 (1 + \|\bx-\by\|_2^2/\ell^2)^{-\beta - 2} \\
        &+ \frac{2 \beta}{\ell^2}(d + (s_p(\bx)-s_p(\by))\cdot(\bx-\by)) (1 + \|\bx-\by\|_2^2/\ell^2)^{-\beta - 1} \\
        &+ (s_p(\bx)\cdot s_p(\by))(1 + \|\bx-\by\|_2^2/\ell^2)^{-\beta}.
    \end{align*}
    Since the first term is always negative and $(1 + \|\bx-\by\|_2^2/\ell^2)^{-\alpha} \leq 1$ for $\alpha = \beta, \beta + 1$, we obtain
    \begin{align*}
       k_p(\bx,\by) \leq& \frac{2 \beta d}{\ell^2}
        + \frac{2 \beta}{\ell^2}|(s_p(\bx)-s_p(\by))\cdot(\bx-\by)| (1 + \|\bx-\by\|_2^2/\ell^2)^{-\beta - 1}  + |(s_p(\bx)\cdot s_p(\by))| \\ \leq& \frac{2 \beta d}{\ell^2} 
        + \frac{2 \beta}{\ell}\big|(s_p(\bx)-s_p(\by))\cdot \frac{\bx-\by}{\|\bx-\by\|_2}\big| \frac{\|\bx-\by\|_2/\ell}{(1 + \|\bx-\by\|_2^2/\ell^2)^{\beta + 1}} + |(s_p(\bx)\cdot s_p(\by))|.
    \end{align*}
    We define the function $g$ for $z \geq 0$ as
    \begin{align*}
        g(z) = \frac{z}{(1 + z^2)^{\beta+1}},
    \end{align*}
    and a simple function analysis shows that
    \begin{align*}
        \frac{c_1}{2} \overset{\textrm{def}}{=} \sup_{z \geq 0} g_1(z) = \frac{1}{\sqrt{2 \beta + 1}} \Big(\frac{2\beta+1}{2\beta+2}\Big)^{\beta+1}.
    \end{align*}
    Since $\beta \in (0,1)$, we have $c_1 \leq 1$.
    We combine the last two inequalities for $k_p(\bx,\by)$ and $c_1$ to get
    \begin{align*}
       k_p(\bx,\by) \leq \frac{2\beta d}{\ell^2}
        + \frac{\beta}{\ell} \big|(s_p(\bx)-s_p(\by))\cdot \frac{\bx-\by}{\|\bx-\by\|_2}\big| + |(s_p(\bx)\cdot s_p(\by))|.
    \end{align*}
    We can apply Cauchy-Schwartz inequality, and since $\bx, \by \in \mathcal{M}_{s_0}$, we get
    \begin{align*}
        \big|(s_p(\bx)-s_p(\by))\cdot \frac{\bx-\by}{\|\bx-\by\|_2}\big| 
        \leq \|(s_p(\bx)-s_p(\by))\|_2 \frac{\|\bx-\by\|_2}{\|\bx-\by\|_2} \leq 2 s_0,
    \end{align*}
    and also 
    \begin{align*}
        |(s_p(\bx)\cdot s_p(\by))| \leq s_0^2.
    \end{align*}
    Overall, we obtain
    \begin{align*}
       k_p(\bx,\by) \leq \frac{2\beta d}{\ell^2} +
       \frac{2\beta s_0}{\ell} + s_0^2.
    \end{align*}
\end{proof}

\begin{proof}[Proof of Corollary \ref{corollary_saddle}]
    As minimum and saddle points are stationary points of $p$, we have
    \begin{align*}
        \{\bx_i\}_{i=1}^m \subset \mathcal{M}_0,
    \end{align*}
    and we can apply Theorem \ref{thm_fly} for $s_0 = 0$ to get the final result.
\end{proof}

\begin{proof}[Proof of Corollary \ref{corollary_fly}]
    Let the density $p$ be a Gaussian mixture model of two components with equal weights, respectively centered in $(-\mu,\mathbf{0}_{d-1})$ and $(\mu, \mathbf{0}_{d-1})$, and of variance $\sigma^2 \mathbf{I_d}$, and let $\nu = \mu/\sigma$.
    We assume that $\nu > 1$ and $\smash{0 \leq s_0 < \big[\nu \sqrt{\nu^2 - 1} -\ln(\nu + \sqrt{\nu^2 - 1})\big]/\mu}$.
    Then, according to Theorem \ref{thm_fly}, for any $\{\bx_i\}_{i=1}^m \subset \mathcal{M}_{s_0}$ of empirical measure $Q_m = \frac{1}{m} \sum_{i=1}^{m} \delta(\bx_i)$, we have
    
    (i) $\KSD^2\big(\bP, \bQ_m\big) < \E[\KSD^2\big(\bP, \bP_m\big)]$ if $m$ and $s_0$ satisfy
    $m < 1 + \frac{\E[\|s_p(\bX)\|_2^2] - s_0^2}{2\beta d /\ell^2 + 2\beta s_0/\ell + s_0^2}$.
    
    To prove statement (ii), we need to characterize the shape of the set $\bM \subset \Rd$, given by the level lines of the squared score norm $\|s_p(\bx)\|_2^2$.
    The density $p$ is a Gaussian mixture, i.e.,
    \begin{align*}
        p(\bx) = \frac{1}{2(2\pi)^{d/2} \sigma^d} e^{-\|\bx^{(-1)}\|_2^2/2\sigma^2} \big( e^{-(x^{(1)}+\mu)^2/2\sigma^2} + e^{-(x^{(1)}-\mu)^2/2\sigma^2}\big),
    \end{align*}
    where $\bx^{(-1)}$ is the vector $\bx$ without the first component. Then, the score is also given by an explicit formula,
    \begin{align*}
        s_p(\bx) = \left( \begin{array}{c}
             -\frac{x^{(1)}}{\sigma^2} + \frac{\mu}{\sigma^2} \tanh(\frac{\mu}{\sigma^2} x^{(1)}) \\
             -\frac{\bx^{(-1)}}{\sigma^2}
        \end{array} \right),
    \end{align*}
    where $\tanh$ is the standard hyperbolic tangent function. An important property of this score function is that the $j$-th component of $s_p$ only depends on $x^{(j)}$, which makes $s_p^{(j)}(\bx)$ invariant by any translation orthogonal to the $j$-th axis. Then, we can compute the squared score norm
    \begin{align*}
        \|s_p(\bx)\|_2^2 =  s_p^{(1)}(x^{(1)})^2 + \frac{\|\bx^{(-1)}\|_2^2}{\sigma^4},
    \end{align*}
    where $s_p^{(1)}(z)^2 = \big(\frac{z}{\sigma^2} - \frac{\mu}{\sigma^2} \tanh(\frac{\mu}{\sigma^2} z)\big)^2$.
    \begin{figure}
    	\begin{center}
    		\includegraphics[scale = 0.4]{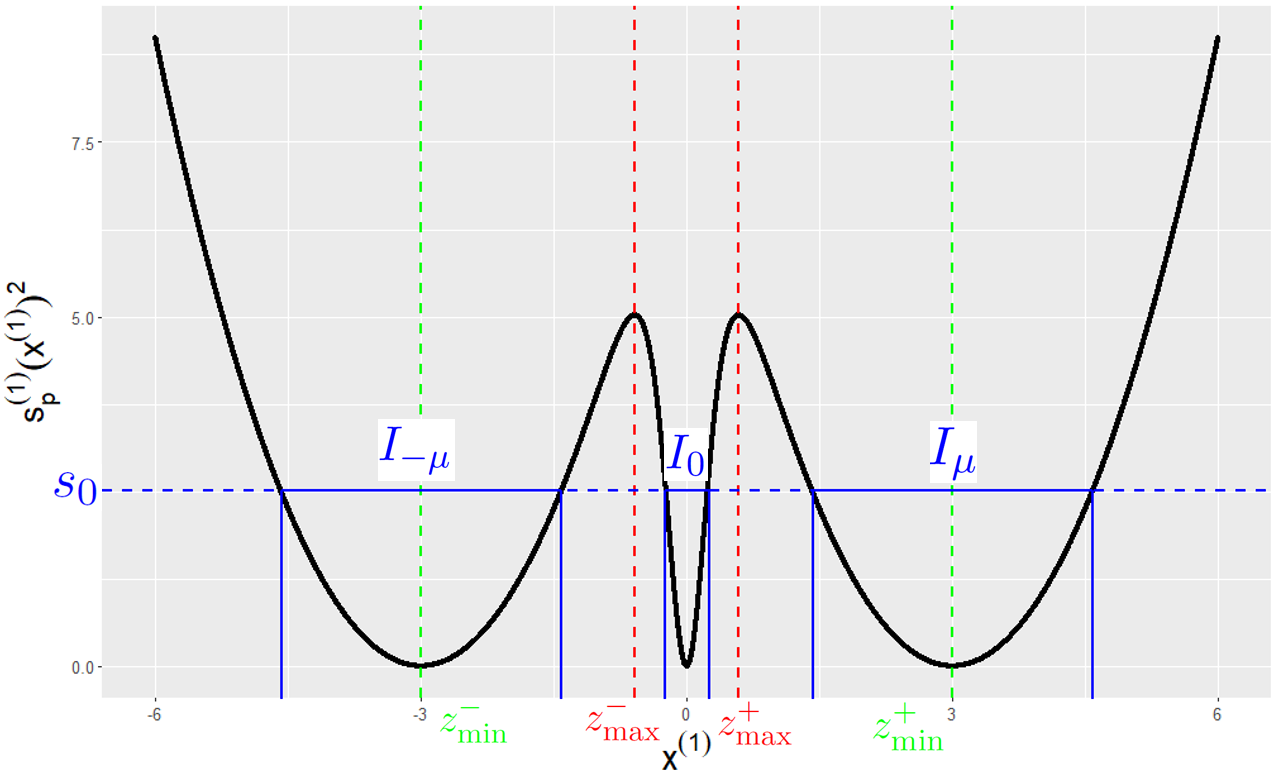}
    		\caption{Squared first component of the score for a Gaussian mixture with $\mu = 3$ and $\sigma = 1$.} \label{fig_score_mixture_proof}
    	\end{center}
    \end{figure}
    A simple function analysis of this univariate function, illustrated in Figure \ref{fig_score_mixture_proof}, shows that $\smash{s_p^{(1)}(z)^2}$ has two local maximum in $z_{\max}^-$ and $z_{\max}^+$, and three local minimum in $z_{\min}^-$, $0$, and $z_{\min}^+$, provided that $\nu  = \mu/\sigma > 1$. We also get that $\smash{s_p^{(1)}(z)^2}$ grows to $+\infty$ when $x^{(1)} \to +/- \infty$. The extreme values are ordered as follows
    \begin{align*}
        -\mu < z_{\min}^- < z_{\max}^- < 0 < z_{\max}^+ < z_{\min}^+ < \mu.
    \end{align*}
    The values of $z_{\min}^-$, $z_{\max}^-$, $z_{\max}^+$, and $z_{\min}^+$ are given by the zeros of the first derivative of $s_p^{(1)}(z)^2$, defined by
    \begin{align*}
        \frac{d s_p^{(1)}(z)^2}{dz} = 2\Big(-\frac{1}{\sigma^2} + \big(\frac{\mu}{\sigma^2}\big)^2 \frac{1}{\cosh(\frac{\mu}{\sigma^2} z)^2}\Big) \Big(-\frac{z}{\sigma^2} + \frac{\mu}{\sigma^2} \tanh\big(\frac{\mu}{\sigma^2} z\big)\Big).
    \end{align*}
    This derivative vanishes when one of the two factors is null. Since $\mu/\sigma > 1$, the first term is null when
    \begin{align*}
        \frac{\mu^2}{\sigma^2} \frac{1}{\cosh(\frac{\mu}{\sigma^2} z)^2} = 1,
    \end{align*}
    which leads to
    \begin{align*}
        z_{\max}^- = - \frac{\sigma^2}{\mu} \textrm{arcosh} \big(\frac{\mu}{\sigma}\big) 
        \quad \textrm{and} \quad
        z_{\max}^+ = \frac{\sigma^2}{\mu} \textrm{arcosh} \big(\frac{\mu}{\sigma}\big).
    \end{align*}
    The second factor is null  when
    \begin{align} \label{eq_tanh}
        \tanh\big(\frac{\mu}{\sigma^2} z\big) - \frac{z}{\mu} = 0.
    \end{align}
    Obviously, $z=0$ is solution. Since $\mu/\sigma > 1$, equation (\ref{eq_tanh}) has two additional solutions. Although they do not have a closed form, we have 
    \begin{align*}
        z_{\min}^- \in (-\mu, z_{\max}^-) \\
        z_{\min}^+ \in (z_{\max}^+, \mu).
    \end{align*}
    Also notice that, as $\mu/\sigma$ gets larger, $z_{\min}^-$ is closer to $-\mu$, and $z_{\min}^+$ to $\mu$. For example in Figure \ref{fig_score_mixture_proof}, we set $\mu/\sigma = 3$, and we hardly see a gap between $-\mu$ and $z_{\min}^-$, or $\mu$ and $z_{\min}^+$.
    
    By definition, for $\bx \in \bM$, we have
    \begin{align*}
        s_p^{(1)}(x^{(1)})^2 \leq \|s_p(\bx)\|_2^2 \leq s_0^2.
    \end{align*}
    Therefore, given the variations of $s_p^{(1)}(x^{(1)})^2$ detailed above and illustrated in Figure \ref{fig_score_mixture_proof}, if $s_0^2 < s_p^{(1)}(z_{\max}^-)^2 = s_p^{(1)}(z_{\max}^+)^2$, it exists three disjoint intervals $I_{-\mu}, I_{0}, I_{\mu} \subset \mathbb{R}$, respectively centered around $-\mu$, $0$, and $\mu$, such that
    \begin{align*}
        \bx \in \bM \implies x^{(1)} \in I_{-\mu} \cup I_{0} \cup I_{\mu}.
    \end{align*}
    To conclude, we compute the value of $s_p^{(1)}(z_{\max}^-)$, that is
    \begin{align*}
        s_p^{(1)}(z_{\max}^+) = -\frac{1}{\mu} \textrm{arcosh} \big(\frac{\mu}{\sigma}\big) + \frac{\mu}{\sigma^2} \tanh\big(\textrm{arcosh} \big(\frac{\mu}{\sigma}\big)\big).
    \end{align*}
    Using the formulas $\tanh(\textrm{arcosh}(x)) = \frac{\sqrt{x^2 - 1}}{x}$ for $|x| > 1$, $\textrm{arcosh}(x) = \ln(x + \sqrt{x^2 - 1})$, and with $\nu = \mu/\sigma$, we get
    \begin{align*}
        s_p^{(1)}(z_{\max}^+) &= \sqrt{(\mu/\sigma)^2 - 1}/\sigma -\ln(\mu/\sigma + \sqrt{(\mu/\sigma)^2 - 1})/\mu \\
        &= \big[\nu \sqrt{\nu^2 - 1} -\ln(\nu + \sqrt{\nu^2 - 1})\big]/\mu,
    \end{align*}
    which is always strictly positive since $\nu > 1$.
    By assumption, $0 \leq s_0 < \big[\nu \sqrt{\nu^2 - 1} -\ln(\nu + \sqrt{\nu^2 - 1})\big]/\mu$, and therefore, we have $s_0 < s_p^{(1)}(z_{\max}^+) = s_p^{(1)}(z_{\max}^-)$, which concludes the proof of statement (ii).
    
\end{proof}

\section{Proof of Theorem \ref{thm_entropy}} \label{app:thm_entropy}

\renewcommand\thetheorem{3.3}
\begin{theorem}
    Let $k_p$ be the Stein kernel associated with the radial kernel $k(\bx, \bx') = \phi(\|\bx-\bx'\|_2/\ell)$, where $\smash{\bx, \bx' \in \Rd}$, $\ell > 0$, and $\smash{\phi \in \mathcal{C}^2(\mathbb{R})}$.
    Let $p$ and $q$ be two bimodal mixture distributions satisfying Assumption \ref{def_bimodal}. We define $w_{\lambda}^{\star}$ as the optimal mixture weight of $q$ with respect to the entropic regularized KSD distance, i.e., $w_{\lambda}^{\star} = \underset{{w \in [0,1]}}{\mathrm{argmin}} \: \KSD_{\lambda}(\bP, \bQ_w)$.
    If $\E[\log(p(\bZ_L))] \neq \E[\log(p(\bZ_R))]$ where $\bZ_L \sim \bQ_L$ and $\bZ_R \sim \bQ_R$, it exists $\lambda \in \mathbb{R}$ such that $w_{\lambda}^{\star} = w_p$.
\end{theorem}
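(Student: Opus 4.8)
The plan is to treat $w \mapsto \KSD^2_{\lambda}(\bP, \bQ_w)$ as a one-dimensional quadratic in the mixing weight $w$, exactly as the plain $\KSD^2$ was handled in the proof of Theorem \ref{thm_modes}, and to exploit that the entropic penalty enters \emph{linearly} in $w$, hence tilts this quadratic without changing its curvature. Since the curvature is fixed and positive while the linear coefficient is affine in $\lambda$, the vertex of the parabola moves affinely --- and nontrivially --- with $\lambda$; picking $\lambda$ so that the vertex lands at $w_p$ will finish the argument.

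Concretely, I would first write $q = w\,q_L + (1-w)\,q_R$ and expand, using bilinearity and symmetry of $k_p$ as in the proof of Theorem \ref{thm_modes},
\[
  \KSD^2(\bP, \bQ_w) = w^2\,\KSD^2(\bP,\bQ_L) + (1-w)^2\,\KSD^2(\bP,\bQ_R) + 2w(1-w)\,\Delta_{L,R},
\]
with $\Delta_{L,R} = \int k_p(\bx,\bx')\,q_L(\bx)\,q_R(\bx')\,d\bx\,d\bx'$. For the regularizer, linearity of expectation gives $\E_{\bZ \sim \bQ_w}[\log p(\bZ)] = w\,\E[\log p(\bZ_L)] + (1-w)\,\E[\log p(\bZ_R)]$. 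Abbreviating $A = \KSD^2(\bP,\bQ_L)$, $B = \KSD^2(\bP,\bQ_R)$, $D = \Delta_{L,R}$, $a = \E[\log p(\bZ_L)]$, $b = \E[\log p(\bZ_R)]$, this produces
\[
  \KSD^2_{\lambda}(\bP,\bQ_w) = (A+B-2D)\,w^2 + \big(2D - 2B - \lambda(a-b)\big)\,w + (B - \lambda b).
\]

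The leading coefficient $A+B-2D$ does not depend on $\lambda$ and is strictly positive: it equals the squared RKHS distance $\|\mu_{\bQ_L} - \mu_{\bQ_R}\|_{\mathcal{H}(k_p)}^2$ between the Stein-kernel mean embeddings of the two modes (using $\E_{\bP}[k_p(\bX,\cdot)] = 0$), which is precisely the quantity shown to be positive in the proof of Theorem \ref{thm_modes}. So $\KSD^2_{\lambda}(\bP,\cdot)$ is strictly convex in $w$, with unconstrained minimizer
\[
  \widehat w(\lambda) = \frac{2B - 2D + \lambda(a-b)}{2(A+B-2D)}.
\]
Since $a \neq b$ by hypothesis, $\lambda \mapsto \widehat w(\lambda)$ is affine with nonzero slope, hence a bijection of $\mathbb{R}$; thus there is a unique $\lambda^{\star}$ with $\widehat w(\lambda^{\star}) = w_p$, namely $\lambda^{\star} = \big(2(A+B-2D)\,w_p - 2B + 2D\big)/(a-b)$. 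As $w_p$ is a mixture weight we have $w_p \in [0,1]$, and for a strictly convex function on $[0,1]$ whose global minimizer already lies in $[0,1]$ the constrained minimizer equals the global one; since minimizing $\KSD_{\lambda}$ over $[0,1]$ is the same as minimizing $\KSD^2_{\lambda}$, we conclude $w_{\lambda^{\star}}^{\star} = w_p$.

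The one step I expect to need genuine care is the strict positivity of $A+B-2D$: were the two mode embeddings to coincide, the penalized objective would be affine in $w$ and its minimizer could not be moved off an endpoint. This is handled by reusing the corresponding argument from the proof of Theorem \ref{thm_modes} --- Cauchy--Schwarz gives $A+B-2D \geq 0$, with equality only if $\mu_{\bQ_L} = \mu_{\bQ_R}$, which is excluded because $\bQ_L$ and $\bQ_R$ are distinct (supported on the disjoint balls around $-\mu$ and $\mu$) and the Stein-kernel embedding is injective. Everything else is elementary algebra with a scalar quadratic; the only conceptual content is the linearity of the entropic term in $w$, together with the hypothesis $\E[\log p(\bZ_L)] \neq \E[\log p(\bZ_R)]$ being exactly what makes the tilt effective --- consistent with the main-text remark that when these two expectations agree the regularization leaves $w_{\lambda}^{\star}$ unchanged.
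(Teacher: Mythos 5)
Your proposal follows the same route as the paper's proof: expand $\KSD^2(\bP,\bQ_w)$ bilinearly into a quadratic in $w$ exactly as in the proof of Theorem~\ref{thm_modes}, observe that the entropic term is affine in $w$ so the regularized objective is still a quadratic with the same leading coefficient, and solve the linear equation $\widehat w(\lambda)=w_p$ for $\lambda$. The paper reaches the identical closed form $\lambda = 2\bigl(w_p\KSD^2(\bP,\bQ_L) - (1-w_p)\KSD^2(\bP,\bQ_R) + (1-2w_p)\Delta_{L,R}\bigr)/\bigl(\E[\log p(\bZ_L)]-\E[\log p(\bZ_R)]\bigr)$. Two small remarks: (i) your justification of the strict positivity of the curvature $A+B-2D$ as $\|\mu_{\bQ_L}-\mu_{\bQ_R}\|_{\mathcal{H}(k_p)}^2>0$ via Cauchy--Schwarz and distinctness of the mode supports is actually cleaner than the paper's assertion that this coefficient equals $\KSD^2(\bP,\bQ_{1/2})/4$, which does not check out algebraically ($A+B-2D$ versus $(A+B+2D)/16$); but note that strict positivity also requires injectivity of the Stein embedding, which is not guaranteed for an arbitrary radial $\phi\in\mathcal{C}^2$ --- a gap shared with the paper. (ii) You make explicit the step that since $w_p\in[0,1]$ the unconstrained vertex already lies in the feasible interval, so the $[0,1]$-constrained minimizer coincides with it; the paper leaves this implicit. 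Neither of these changes the approach.
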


\begin{proof}[Proof of Theorem \ref{thm_entropy}]
    We consider the mixture distributions $p$ and $q$ satisfying Assumption \ref{def_bimodal}, for $\mu >0$ and $r > 0$, and denote by $q_L$ the distribution of the left mode of the mixture $q$, and similarly, $q_R$ is the distribution of the right mode of $q$. The probability measures $\bQ_L$ and $\bQ_R$ respectively admits the densities $q_L$ and $q_R$.
    As in the proof of Theorem \ref{thm_modes}, we get that
    \begin{align*}
         \KSD^2(\bP, \bQ_w) = & w^2 \big[\KSD^2(\bP, \bQ_L) + \KSD^2(\bP, \bQ_R) - 2\Delta_{L,R} \big] \\ & - 2 w \big[\KSD^2(\bP, \bQ_R) - \Delta_{L,R}\big] + \KSD^2(\bP, \bQ_R).
    \end{align*}
    Next, we define $\bZ \sim \bQ_w$, $\bZ_L \sim \bQ_L$, and $\bZ_R \sim \bQ_R$, and develop the entropic regularization term,
    \begin{align*}
        \E[\log(p(\bZ))] &= \int \log(p(\bx))(w q_L(\bx) + (1-w) q_R(\bx))d\bx \\
        &= w \int \log(p(\bx))q_L(\bx) d\bx + (1-w) \int \log(p(\bx)) q_R(\bx))d\bx \\
        &= w \E[\log(p(\bZ_L))] + (1-w) \E[\log(p(\bZ_R))].
    \end{align*}
    Combining these two results, we have
    \begin{align*}
         \KSD_{\lambda}^2(\bP, \bQ_w) = & w^2 \big[\KSD^2(\bP, \bQ_L) + \KSD^2(\bP, \bQ_R) - 2\Delta_{L,R} \big] \\ & - 2 w \big[\KSD^2(\bP, \bQ_R) - \Delta_{L,R} + \lambda/2 (\E[\log(p(\bZ_L))] - \E[\log(p(\bZ_R))]) \big] \\ & + \KSD^2(\bP, \bQ_R) - \lambda \E[\log(p(\bZ_R))].
    \end{align*}
    We recall that $p$ does not depend on $w$, but only on the fixed weight $w_p$ and the two mode distributions. Consequently, $\KSD_{\lambda}^2(\bP, \bQ_w)$ is a second-order polynomial with respect to $w$.
    As for Theorem \ref{thm_modes}, the coefficient of $w^2$ is $\KSD^2(\bP, \bQ_{1/2})/4$, and is therefore positive. Then, the polynomial is minimized with respect to $w$, at $w^{\star}$ defined by
    \begin{align*}
        w_{\lambda}^{\star} = \frac{\KSD^2(\bP, \bQ_R) - \Delta_{L,R} + \lambda/2 (\E[\log(p(\bZ_L))] - \E[\log(p(\bZ_R))])}{\KSD^2(\bP, \bQ_L) + \KSD^2(\bP, \bQ_R) - 2\Delta_{L,R}}.
    \end{align*}
    Since $\E[\log(p(\bZ_L))] - \E[\log(p(\bZ_R))] \neq 0$ by assumption, we get that $w_{\lambda}^{\star} = w_p$ for lambda defined by
    \begin{align*}
        \lambda = 2\frac{w_p \KSD^2(\bP, \bQ_L) - (1 - w_p) \KSD^2(\bP, \bQ_R) + (1 - 2w_p) \Delta_{L,R}}{\E[\log(p(\bZ_L))] - \E[\log(p(\bZ_R))]}.
    \end{align*}
\end{proof}

\section{Proof of Theorem \ref{thm_laplacian}}

\renewcommand\thetheorem{3.4}
\begin{theorem}
    Let $k_p$ be the Stein kernel associated with the IMQ kernel with $\ell > 0$, $\beta \in (0,1)$, and $c = 1$.
    For $m \geq 2$, let $\smash{\{\bx_i\}_{i=1}^m \subset \Rd}$ be a set of points concentrated at $\bx_0$, a local minimum or saddle point of $p$, and of empirical measure $\bQ_m$. Then, we have $\smash{\LKSD^2\big(\bP, \bQ_m\big) > \E[\LKSD^2\big(\bP, \bP_m\big)]}$, if the density at $\bx_0$ satisfies $p(\bx_0) < \Delta^+ p(\bx_0) / \big(\E[\|s_p(\bX)\|_2^2] + \E[\Delta^+ \log p(\bX)]\big)$. 
\end{theorem}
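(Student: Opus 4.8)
The plan is to evaluate both sides of the claimed inequality in closed form and compare. Because the $m\geq 2$ points are all located at the single point $\bx_0$, the empirical measure $\bQ_m$ is the Dirac mass at $\bx_0$: every term $k_p(\bx_i,\bx_j)$ (diagonal or off-diagonal) equals $k_p(\bx_0,\bx_0)$, and each $\Delta^{+}\log p(\bx_i)$ equals $\Delta^{+}\log p(\bx_0)$. Summing the $m(m-1)$ off-diagonal terms and the $m$ diagonal terms in the definition of $\LKSD^2$ gives $\LKSD^2(\bP,\bQ_m)=k_p(\bx_0,\bx_0)+\tfrac{1}{m}\Delta^{+}\log p(\bx_0)$.

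Next I would simplify using two pointwise identities valid at the stationary point $\bx_0$ (a local minimum or saddle point of $p$ is a critical point, so $\nabla p(\bx_0)=0$ and hence $s_p(\bx_0)=\nabla p(\bx_0)/p(\bx_0)=0$). From the closed form of the IMQ Stein kernel recalled in the proof of Theorem~2.4, $k_p(\bx,\bx)=2\beta d/\ell^{2}+\|s_p(\bx)\|_2^{2}$, so $k_p(\bx_0,\bx_0)=2\beta d/\ell^{2}$. For the Laplacian term, $\partial_j^{2}\log p=\partial_j^{2}p/p-(\partial_j p/p)^{2}$, and the quadratic piece vanishes at $\bx_0$ because $\partial_j p(\bx_0)=0$; since $p(\bx_0)>0$ this yields $\Delta^{+}\log p(\bx_0)=\Delta^{+}p(\bx_0)/p(\bx_0)$. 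Hence $\LKSD^2(\bP,\bQ_m)=2\beta d/\ell^{2}+\Delta^{+}p(\bx_0)/(m\,p(\bx_0))$.

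On the other side, for the iid sample $\bX_1,\dots,\bX_m$, independence together with the Stein identity $\E[k_p(\bX_i,\bX_j)]=0$ for $i\neq j$ kills all off-diagonal expectations, so $\E[\LKSD^2(\bP,\bP_m)]=\tfrac{1}{m}\big(\E[k_p(\bX,\bX)]+\E[\Delta^{+}\log p(\bX)]\big)=\tfrac{1}{m}\big(2\beta d/\ell^{2}+\E[\|s_p(\bX)\|_2^{2}]+\E[\Delta^{+}\log p(\bX)]\big)$, again using the diagonal identity for $k_p$. Subtracting and multiplying the desired inequality by $m>0$ turns it into $(m-1)\,2\beta d/\ell^{2}+\Delta^{+}p(\bx_0)/p(\bx_0)>\E[\|s_p(\bX)\|_2^{2}]+\E[\Delta^{+}\log p(\bX)]$. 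Since $m\geq 2$, the first term is nonnegative, so it suffices that $\Delta^{+}p(\bx_0)/p(\bx_0)>\E[\|s_p(\bX)\|_2^{2}]+\E[\Delta^{+}\log p(\bX)]$, i.e. $p(\bx_0)<\Delta^{+}p(\bx_0)/\big(\E[\|s_p(\bX)\|_2^{2}]+\E[\Delta^{+}\log p(\bX)]\big)$, which is precisely the hypothesis (note it forces $\Delta^{+}p(\bx_0)>0$ since $p(\bx_0)>0$, so the division is legitimate).

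The proof is essentially a bookkeeping computation; the only step requiring a little care is the identity $\Delta^{+}\log p(\bx_0)=\Delta^{+}p(\bx_0)/p(\bx_0)$, which hinges on the vanishing of $\nabla p$ at the critical point, and on verifying the sign conditions so that every division performed is valid.
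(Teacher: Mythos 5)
Your computation matches the paper's proof step for step: both express $\LKSD^2(\bP,\bQ_m)$ and $\E[\LKSD^2(\bP,\bP_m)]$ via the diagonal identity $k_p(\bx,\bx)=2\beta d/\ell^2+\|s_p(\bx)\|_2^2$, use $\nabla p(\bx_0)=0$ to reduce $\Delta^{+}\log p(\bx_0)$ to $\Delta^{+}p(\bx_0)/p(\bx_0)$, and then bound the difference multiplied by $m$ exactly as you do. Your proof is correct and essentially identical to the paper's.
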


\begin{proof}[Proof of Theorem \ref{thm_laplacian}]
Following the same approach as in the proof of Theorem \ref{thm_fly}, we have
\begin{align*} 
    \E[\LKSD^2(\bP, \bP_m)] = \frac{2\beta d}{m \ell^2} + \frac{\E[\|s_p(\bX)\|_2^2]}{m} + \frac{\E[\Delta^{+} \log p(\bX)]}{m},
\end{align*}
where $\bX \sim \bP$, and, with the empirical distribution $\bQ_m = \frac{1}{m}\sum_{i=1}^{m}\delta(\bx_i)$,
\begin{align*}
    \LKSD^2(\bP, \bQ_m) = \frac{2\beta d}{m\ell^2} + \frac{1}{m^2} \sum_{i=1}^{m} \| s_p(\bx_i) \|_2^2 + \frac{1}{m^2} \sum_{i\neq j}^{m} k_p(\bx_i,\bx_j) + \frac{1}{m^2} \sum_{i=1}^{m} \Delta^{+} \log(p(\bx_i)).
\end{align*}
As $\{\bx_i\}_{i=1}^m$ are concentrated at a local minimum or saddle point $\bx_0$, the score is null for all particles, as well as the distances between them, and we get
\begin{align*}
    \LKSD^2(\bP, \bQ_m) = \frac{2\beta d}{m\ell^2} + \frac{(m-1) 2\beta d}{m \ell^2} + \frac{\Delta^{+} \log(p(\bx_0))}{m}.
\end{align*}
Next, the difference $m \big( \LKSD^2(\bP, \bQ_m) - \E[\LKSD^2(\bP, \bP_m)] \big)$ writes
\begin{align*}
    m \big( \LKSD^2(\bP, \bQ_m) - \E[\LKSD^2(\bP, \bP_m)] \big) = \Delta^{+} \log(p(\bx_0)) &+ (m-1)\frac{2\beta d}{\ell^2} \\ &- \big(\E[\|s_p(\bX)\|_2^2] + \E[\Delta^{+} \log p(\bX)]\big).
\end{align*}
By definition,
\begin{equation}
    \Delta^{+} \log p(\bx) \overset{def}{=} \sum_{j=1}^d \left(\frac{\partial^2 \log p(\bx)}{\partial x^{(j) 2}}\right)^{+},
\end{equation}
with 
\begin{align*}
    \frac{\partial^2 \log p(\bx)}{\partial x^{(j) 2}} = \frac{1}{p(\bx)} \frac{\partial^2 p(\bx)}{\partial x^{(j) 2}} - \left(\frac{1}{p(\bx)} \frac{\partial p(\bx)}{\partial x^{(j)}}\right)^2.
\end{align*}
As $\bx_0$ is a stationary point of $p$, $\partial p(\bx_0)/\partial x^{(j)} = 0$, and we obtain
\begin{align*}
    \frac{\partial^2 \log p(\bx_0)}{\partial x^{(j) 2}} = \frac{1}{p(\bx_0)} \frac{\partial^2 p(\bx_0)}{\partial x^{(j) 2}},
\end{align*}
leading to
 \begin{equation}
    \Delta^{+} \log p(\bx_0) = \sum_{j=1}^d \frac{1}{p(\bx_0)} \left(\frac{\partial^2 p(\bx_0)}{\partial x^{(j) 2}}\right)^{+} = \frac{\Delta^{+} p(\bx_0)}{p(\bx_0)}.
\end{equation}
Finally, we get
\begin{align*}
    m \big(\LKSD^2(\bP, \bQ_m) - \E[\LKSD^2(\bP, \bP_m)] \big) = \frac{\Delta^{+} p(\bx_0)}{p(\bx_0)} &+  (m-1)\frac{2\beta d}{\ell^2} \\ &- \big(\E[\|s_p(\bX)\|_2^2] + \E[\Delta^{+} \log p(\bX)]\big),
\end{align*}
which ensures that $\LKSD^2(\bP, \bQ_m) - \E[\LKSD^2(\bP, \bP_m)] > 0$ for $m \geq 2$, provided that
\begin{align*}
        p(\bx_0) < \frac{\Delta^+ p(\bx_0)}{\E[\|s_p(\bX)\|_2^2] + \E[\Delta^+ \log p(\bX)]}.
    \end{align*}
\end{proof}

\section{Proof of Theorem \ref{thm_MCMC}} \label{app:bound_ksd}

\renewcommand\thetheorem{3.5}
\begin{assumption} 
    Let $\bQ$ be a probability distribution on $\Rd$, such that $\bP$ is absolutely continuous with respect to $\bQ$. Let $\{\bZ_i\}_{i \in \mathbb{N}} \subset \Rd$ be a $\bQ$-invariant, time-homogeneous Markov chain, generated using a $V$-uniformly ergodic transition kernel, such that $\smash{V(\bx) \geq \frac{d\bP}{d\bQ}\sqrt{2\beta d/\ell^2 + \|s_p(\bx)\|_2^2}}$. 
    Suppose that, for some $\gamma > 0$, $\underset{i \in \mathbb{N}}{\sup} \hspace*{1mm} \E[e^{\gamma |\log(p(\bZ_i))|}] < \infty$, $\underset{i \in \mathbb{N}}{\sup} \hspace*{1mm} \E[e^{\gamma \Delta^{+} \log p(\bZ_i)}] < \infty$, \vspace*{-3mm}
    \begin{align*}
        \underset{i \in \mathbb{N}}{\sup} \hspace*{1mm} \E\big[e^{\gamma \max (1, \frac{d\bP}{d\bQ}(\bZ_i)^2) (\frac{2\beta d}{\ell^2} + \|s_p(\bZ_i)\|_2^2)}\big] < \infty, \hspace*{1mm}  \underset{i \in \mathbb{N}}{\sup} \hspace*{1mm} \E\Big[\frac{d\bP}{d\bQ}(\bZ_i) \sqrt{\frac{2\beta d}{\ell^2} + \|s_p(\bZ_i)\|_2^2} V(\bZ_i) \Big] < \infty.       
    \end{align*}
\end{assumption}
Assumption \ref{assumption_MCMC} is close to the assumption made in \citet{riabiz2020optimal}. For the last two finite expectations of the assumption, notice that we have just plugged the formula $k_p(\bx, \bx) = 2\beta d/\ell^2 + ||s_p(\bx)||^2$ into the integrability conditions, since this formula is quite straightforward. But in addition to \citet{riabiz2020optimal}, we require two integrability assumptions, one for each regularization term. We give additional insights about these quite complex conditions below. Overall, our assumptions are hardly stronger than those of \citet{riabiz2020optimal}.

The condition $E[e^{\gamma |\log(p(\bZ_i))|}] < \infty$ is satisfied if $\int f(\bx)p(\bx)^{-\gamma} dx < \infty$, where $f$ is the distribution of $\bZ_i$ (since $p(\bx) > 1$ only on a compact set and $p$ is continuous). Therefore, there exists $\gamma > 0$ satisfying such condition, provided that the tails of density $f$ are not too heavy compared to the tails of $p$. This has to be verified for all iterations of the Markov Chain, and happens to be a quite mild assumption.

Regarding the condition involving the Laplacian term, we can use the analysis from Riabiz et al. (2022) (Appendix S2.4). It relies on a Lipschitz condition for the score function $\nabla \log p$, to transform the first finite expectation of Assumption \ref{assumption_MCMC} into a more amenable and practical integrability condition which writes $E[\kappa \|\bZ_i\|_2^2] < \infty$ for some $\kappa \in (0,\infty)$. In the same spirit, if we add a Lipschitz condition on $\Delta^+ \log p$, our second additional integrability assumption reduces to a similar amenable condition. Finally, even though Riabiz et al (2022) do not elaborate further on this condition, it can be noticed that it is satisfied if all $\bZ_i$ have subgaussian tails, for example. Besides, one of the main example of distantly dissipative distributions are log-concave functions outside of a compact set. In this case, the Laplacian regularization is null outside of a compact set, since the Laplacian is negative for concave functions. Then, the integrability condition is automatically satisfied for this type of distributions.

\renewcommand\thetheorem{3.6}
\begin{theorem}
    Let $\bP$ be a distantly dissipative probability measure, that admits the density $p \in \mathcal{C}^2(\Rd)$, $k_p$ be the Stein kernel associated with the IMQ kernel wh  $\ell, c > 0, \beta \in (0,1)$. Let $\{\bZ_i\}_{i \in \mathbb{N}} \subset \Rd$ be a Markov chain satisfying Assumption \ref{assumption_MCMC}, $\pi$ be the index sequence of length $m_n$ generated by regularized Stein thinning, and $\bQ_{m_n}$ be the empirical measure of $\{\bZ_{\pi_i}\}_{i = 1}^{m_n}$.
    If $\log(n)^{\alpha} < m_n < n$, with any $\alpha > 1$, and $\smash{\lambda_{m_n} = o(\log(m_n)/m_n)}$, then we have almost surely $\bQ_{m_n} \underset{n \to \infty} \Longrightarrow \bP$.
\end{theorem}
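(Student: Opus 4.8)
The plan is to reduce the theorem to the single statement $\KSD^2(\bP,\bQ_{m_n})\to 0$ almost surely, and then invoke \citet{gorham2017measuring}: for a distantly dissipative target (Definition \ref{def_dissipative}) and the IMQ kernel, $\KSD(\bP,\cdot)\to 0$ forces weak convergence, so $\bQ_{m_n}\Rightarrow\bP$. To establish $\KSD^2(\bP,\bQ_{m_n})\to 0$ I would follow the architecture of \citet{riabiz2020optimal}, combining a deterministic bound on the greedy output with probabilistic control of the chain-dependent quantities; the novelty lies in handling the two regularization terms and the two extra integrability conditions of Assumption \ref{assumption_MCMC}.

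\emph{Deterministic greedy bound.} The key observation is that the regularized greedy step coincides with the marginal minimization of $t^2\,\LKSD_{\lambda}^2(\bP,\bQ_t)$: at step $t$ one minimizes $k_p(\bz,\bz)+\Delta^{+}\log p(\bz)-\lambda t\log p(\bz)+2\sum_{j<t}k_p(\bz_{\pi_j},\bz)$, i.e.\ a greedy minimization of a quadratic form whose off-diagonal kernel is $k_p$ and whose effective self-interaction at step $t$ is $k_p(\bz,\bz)+\Delta^{+}\log p(\bz)-\lambda t\log p(\bz)$. Running the standard greedy/herding estimate for such forms — with the competitor built, as in \citet{riabiz2020optimal}, by importance-resampling the candidates $\{\bZ_i\}_{i\le n}$ with weights proportional to $d\bP/d\bQ$ so that it behaves like an i.i.d.\ $\bP$-sample of size $m_n$ — yields a bound of the schematic form
\begin{align*}
\LKSD_{\lambda}^2(\bP,\bQ_{m_n}) &\;\le\; \E[\LKSD_{\lambda}^2(\bP,\bP_{m_n})] + (\text{ergodic error}) \\
&\quad + C\,\frac{1+\log m_n}{m_n}\Bigl(\max_{i\le n}k_p(\bZ_i,\bZ_i) + \max_{i\le n}\Delta^{+}\log p(\bZ_i) + \lambda_{m_n}m_n\max_{i\le n}|\log p(\bZ_i)|\Bigr),
\end{align*}
where $\E[\LKSD_{\lambda}^2(\bP,\bP_{m_n})]=\tfrac1{m_n}\E[k_p(\bX,\bX)]+\tfrac1{m_n}\E[\Delta^{+}\log p(\bX)]-\lambda_{m_n}\E[\log p(\bX)]$ is the bias expression recorded after the theorem, and the ``ergodic error'' — the gap between the importance-resampled competitor and a genuine $\bP$-sample — is controlled exactly as in \citet{riabiz2020optimal} using the last two finite expectations of Assumption \ref{assumption_MCMC} after substituting $k_p(\bx,\bx)=2\beta d/\ell^2+\|s_p(\bx)\|_2^2$.

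\emph{From $\LKSD_{\lambda}^2$ to $\KSD^2$ and probabilistic control.} Since $\KSD^2(\bP,\bQ_{m_n})=\LKSD_{\lambda}^2(\bP,\bQ_{m_n})-\tfrac1{m_n^2}\sum_i\Delta^{+}\log p(\bZ_{\pi_i})+\tfrac{\lambda_{m_n}}{m_n}\sum_i\log p(\bZ_{\pi_i})$, the two corrections are bounded by $\tfrac1{m_n}\max_{i\le n}|\Delta^{+}\log p(\bZ_i)|$ and $\lambda_{m_n}\max_{i\le n}|\log p(\bZ_i)|$. I would then apply Borel--Cantelli to each exponential moment bound of Assumption \ref{assumption_MCMC}: $\sup_i\E[e^{\gamma k_p(\bZ_i,\bZ_i)}]<\infty$ (which follows since $\max(1,(d\bP/d\bQ)^2)\ge 1$), $\sup_i\E[e^{\gamma\Delta^{+}\log p(\bZ_i)}]<\infty$, and $\sup_i\E[e^{\gamma|\log p(\bZ_i)|}]<\infty$ each give that the corresponding maximum over $i\le n$ is $O(\log n)$ almost surely. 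Combining this with $m_n>(\log n)^{\alpha}$, $\log m_n\le\log n$, and $\lambda_{m_n}=o(\log m_n/m_n)$, every chain-dependent term vanishes: the Laplacian correction is $O(\log n/m_n)$, the entropic correction is $o(\log m_n/m_n)\cdot O(\log n)$, and the greedy overhead collects $O\bigl((\log n)(1+\log m_n)/m_n\bigr)$ plus $o(\log m_n)\cdot O(\log n)/m_n$; meanwhile the competitor bias $\E[\LKSD_{\lambda}^2(\bP,\bP_{m_n})]$ is $O(1/m_n)+\lambda_{m_n}\,O(1)\to 0$. Hence $\KSD^2(\bP,\bQ_{m_n})\to 0$ a.s., and the conclusion follows from \citet{gorham2017measuring}.

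\textbf{Main obstacle.} The delicate point is the greedy overhead with the \emph{time-growing} entropic coefficient $\lambda t$: one must verify that the usual greedy estimate still closes when the effective self-interaction itself increases with $t$, and that the accumulated entropic penalty stays negligible. This is precisely where $\lambda_{m_n}=o(\log m_n/m_n)$ enters: it guarantees $\lambda_{m_n}t\le\lambda_{m_n}m_n=o(\log m_n)$ uniformly over $t\le m_n$, so the growth is absorbed by the logarithmic overhead of the greedy scheme; a rate faster than $1/m_n$ makes the entropic bias disappear (recovering vanilla Stein thinning), while a slower rate breaks the cancellation. A secondary, mostly bookkeeping obstacle is re-running the $V$-uniform ergodicity coupling and the importance-resampling competitor estimate with the augmented potential $\sqrt{2\beta d/\ell^2+\|s_p\|_2^2}$ and the two new integrability conditions of Assumption \ref{assumption_MCMC}; for targets that are log-concave outside a compact set the Laplacian correction vanishes off that set, which renders its integrability condition automatic.
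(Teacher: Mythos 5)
Your architecture matches the paper's: a deterministic greedy-overhead bound (the paper's Lemma~\ref{lemma_bound_ksd}), a Borel--Cantelli maximal inequality for the chain-dependent maxima (Lemma~\ref{lemma_max_fxixi}), the rate condition $\lambda_{m_n}=o(\log m_n/m_n)$ to kill the entropic contribution, and distant dissipativity to upgrade $\KSD\to0$ to weak convergence. Two structural choices differ from the paper but are harmless: the paper bounds $\KSD^2(\bP,\bQ_{m_n})$ directly (the recursion is run on $a_t=t^2\KSD^2$, with the regularization terms absorbed into constants $S_1,S_2$), so no conversion back from $\LKSD_\lambda^2$ is needed; and the comparison target is the optimal weighted KSD $\|h^\star\|^2_{\mathcal H(k_p)}$ over the candidate points rather than an importance-resampled i.i.d.\ competitor, after which $\|h^\star\|^2\to0$ is quoted from \citet{riabiz2020optimal}.

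There is, however, a genuine gap in the quantitative step you flag as ``the main obstacle.'' Your schematic bound places the entropic penalty inside the $\tfrac{1+\log m_n}{m_n}(\cdots)$ bracket, giving an effective overhead of order $(1+\log m_n)\,\lambda_{m_n}\max_{i\le n}|\log p(\bZ_i)|$, and you assert the $\lambda t$ growth is ``absorbed by the logarithmic overhead.'' This is not what happens, and the claimed coefficient is too loose: with $\lambda_{m_n}=o(\log m_n/m_n)$ and $\max_{i\le n}|\log p(\bZ_i)|\asymp\log n$ a.s., the extra $\log m_n$ factor costs you a power of $\log n$ that the condition $m_n>(\log n)^{\alpha}$, $\alpha>1$, cannot pay for. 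The paper's Lemma~\ref{lemma_bound_ksd} proves the sharper overhead $2\lambda\max_i|\log p(\bx_i)|$ with no $\log m$ factor: in the induction $a_t\le t^2(\|h^\star\|^2+C_t+\lambda S_2)$, the entropic term enters the recursion as $t\lambda S_2$ at step $t$, and $\sum_{t\le m}t\lambda S_2\asymp\tfrac12 m^2\lambda S_2$; dividing by $m^2$ leaves a \emph{constant}-coefficient overhead $\lambda S_2$. In other words, the $\lambda t$ growth is absorbed by the $m^2$ normalization, not by the logarithmic greedy factor, and that distinction is exactly what lets the theorem hold for all $\alpha>1$. Without reproducing that sharper recursion, the argument does not close at the stated generality.
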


Theorem \ref{thm_MCMC} extends Theorem $3$ from \citet{riabiz2020optimal} to regularized Stein Thinning, using Lemmas \ref{lemma_bound_ksd}-\ref{lemma_max_fxixi} and assuming the following convergence rate of the regularization parameter $\smash{\lambda_{m_n} = o(\log(m_n)/m_n)}$. Lemma \ref{lemma_bound_ksd} also extends Theorem $1$ from \citet{riabiz2020optimal}[Theorem 1], whereas Lemma \ref{lemma_max_fxixi} is a slight modification of Lemma $5$ from \citet{riabiz2020optimal}.

\renewcommand\thetheorem{3}
\begin{lemma} \label{lemma_bound_ksd}
    Let $\bP$ be a probability measure on $\mathbb{R}^d$ that admits density $p \in \mathcal{C}^2(\Rd)$, $k_p$ be a reproducing Stein kernel, and $\{ \bx_i \}_{i=1}^{n} \subset \mathbb{R}^d$ a fixed set of points. If $\pi$ is an index sequence of length $m$ produced by regularized Stein thinning, then we have for $\lambda > 0$,
    \begin{align*}
        \KSD^2\big(\frac{1}{m}\sum_{j=1}^{m} \delta(\bx_{\pi_j})\big) \leq 
        \KSD^2&\big(\sum_{i=1}^{n} w_i^{\star} \delta(\bx_i)\big)
        + \frac{1+\log(m)}{m} \max_{i=1,\dots,n}k_p(\bx_i,\bx_i) \\ &+ \frac{1+\log(m)}{m} \max_{i=1,\dots,n} \Delta^{+} \log(p(\bx_i))
        + 2 \lambda \max_{i=1,\dots,n} |\log(p(\bx_i))|,
    \end{align*}
    where the weights $w^{\star}$ are defined as
    \begin{align*}
        w^{\star} \in \arg \min_{\begin{array}{c} \sum_i w_i = 1 \\ w_i \geq 0 \end{array}} \KSD^2\big(\sum_{i=1}^{n} w_i \delta(\bx_i)\big).
    \end{align*}
\end{lemma}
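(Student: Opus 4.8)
The plan is to adapt the proof of Theorem~1 of \citet{riabiz2020optimal} to the regularized greedy objective, carrying along the two extra terms. Work in the RKHS $\mathcal{H}(k_p)$ of the Stein kernel. Since $k_p$ is a reproducing Stein kernel, $\E[k_p(\bX,\cdot)]=0$ in $\mathcal{H}(k_p)$ for $\bX\sim\bP$, so for any finitely supported measure $\sum_i v_i\delta(\bx_i)$ with $v$ on the simplex one has $\KSD^2\big(\sum_i v_i\delta(\bx_i)\big)=\big\|\sum_i v_i k_p(\bx_i,\cdot)\big\|_{\mathcal{H}(k_p)}^2$. Writing $\hat{\nu}_t=\sum_{j=1}^t k_p(\bx_{\pi_j},\cdot)$ and $\hat{\mu}^\star=\sum_{i=1}^n w_i^\star k_p(\bx_i,\cdot)$, the left-hand side of the lemma equals $\|\hat{\nu}_m\|_{\mathcal{H}(k_p)}^2/m^2$ while $\|\hat{\mu}^\star\|_{\mathcal{H}(k_p)}^2=\KSD^2\big(\sum_i w_i^\star\delta(\bx_i)\big)$.

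Next I would unfold one iteration. By construction $\pi_t$ minimizes over $i\in\{1,\dots,n\}$ the quantity $\Phi_t(i)=k_p(\bx_i,\bx_i)+\Delta^{+}\log(p(\bx_i))-\lambda t\log(p(\bx_i))+2\sum_{j=1}^{t-1}k_p(\bx_{\pi_j},\bx_i)$, hence $\Phi_t(\pi_t)\le\sum_{i=1}^n w_i^\star\Phi_t(i)$. Introduce $T_t=\|\hat{\nu}_t\|_{\mathcal{H}(k_p)}^2+\sum_{j=1}^t\Delta^{+}\log(p(\bx_{\pi_j}))$, the Laplacian-corrected KSD of the first $t$ selected points \emph{without} the entropy term; expanding $\|\hat{\nu}_t\|^2=\|\hat{\nu}_{t-1}+k_p(\bx_{\pi_t},\cdot)\|^2$ by the reproducing property gives $T_t-T_{t-1}=\Phi_t(\pi_t)+\lambda t\log(p(\bx_{\pi_t}))$. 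Substituting the greedy inequality, using $\sum_i w_i^\star\sum_{j<t}k_p(\bx_{\pi_j},\bx_i)=\langle\hat{\nu}_{t-1},\hat{\mu}^\star\rangle_{\mathcal{H}(k_p)}$, and bounding $|\log(p(\bx_{\pi_t}))-\sum_i w_i^\star\log(p(\bx_i))|\le 2\max_i|\log(p(\bx_i))|$ yields
\begin{align*}
  T_t-T_{t-1}\le \sum_i w_i^\star k_p(\bx_i,\bx_i)+2\langle\hat{\nu}_{t-1},\hat{\mu}^\star\rangle_{\mathcal{H}(k_p)}+\sum_i w_i^\star\Delta^{+}\log(p(\bx_i))+2\lambda t\max_i|\log(p(\bx_i))|.
\end{align*}

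Now I would bound each term. Since $k_p(\bx_i,\bx_i)\ge0$ and $\Delta^{+}\log(p(\bx_i))\ge0$, the two weighted sums are at most $\max_i k_p(\bx_i,\bx_i)$ and $\max_i\Delta^{+}\log(p(\bx_i))$; by Cauchy--Schwarz together with $\Delta^{+}\log p\ge0$ (so $\|\hat{\nu}_{t-1}\|^2\le T_{t-1}$), $\langle\hat{\nu}_{t-1},\hat{\mu}^\star\rangle\le\|\hat{\mu}^\star\|\sqrt{T_{t-1}}=c_0\sqrt{T_{t-1}}$ with $c_0=\KSD\big(\sum_i w_i^\star\delta(\bx_i)\big)$. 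Setting $C=\max_i k_p(\bx_i,\bx_i)+\max_i\Delta^{+}\log(p(\bx_i))$ and using $c_0^2\ge0$ gives, with $T_0=0$, the recursion $T_t\le(\sqrt{T_{t-1}}+c_0)^2+C+2\lambda t\max_i|\log(p(\bx_i))|$. This is exactly the recursion treated in \citet{riabiz2020optimal}, except that $\max_i k_p(\bx_i,\bx_i)$ is replaced by $C$ and there is an additional per-step perturbation $2\lambda t\max_i|\log(p(\bx_i))|$. The first change is inconsequential, since $\max_i\Delta^{+}\log(p(\bx_i))$ sits in exactly the same additive position as $\max_i k_p(\bx_i,\bx_i)$, so the same bookkeeping returns the factor $\frac{1+\log m}{m}$ in front of $C$. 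The entropy perturbation is handled by summing it over the iterations: its total is $2\lambda\max_i|\log(p(\bx_i))|\sum_{t=1}^m t=\lambda m(m+1)\max_i|\log(p(\bx_i))|\le 2\lambda m^2\max_i|\log(p(\bx_i))|$, i.e. at most $2\lambda\max_i|\log(p(\bx_i))|$ after dividing $T_m$ by $m^2$. Finally $\Delta^{+}\log p\ge0$ gives $\|\hat{\nu}_m\|^2\le T_m$, so $\KSD^2\big(\frac1m\sum_{j=1}^m\delta(\bx_{\pi_j})\big)=\|\hat{\nu}_m\|^2/m^2\le T_m/m^2$, and collecting the three contributions yields the claimed inequality.

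The main obstacle is the coefficient $\lambda t$ multiplying the entropy term: because it grows with the iteration index, $t^2\LKSD_\lambda^2$ of the first $t$ points does not reduce cleanly to $(t-1)^2\LKSD_\lambda^2$ of the first $t-1$ points, so one cannot recurse on the regularized discrepancy itself. The resolution is to recurse instead on $T_t$ — the KSD with the (iteration-independent) Laplacian correction only — and to treat the entropy purely as an additive step error; the elementary bound $\sum_{t=1}^m t\le m^2$ is precisely what keeps this error at order $\lambda$. A secondary, routine point is checking that the new constant $\max_i\Delta^{+}\log(p(\bx_i))$ propagates through the \citet{riabiz2020optimal} recursion with the same $\frac{1+\log m}{m}$ rate as $\max_i k_p(\bx_i,\bx_i)$, which it does because it enters the recursion in the same place.
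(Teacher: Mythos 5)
Your overall strategy matches the paper's: unroll $t^2\KSD^2$ (or its Laplacian-corrected version) step by step, invoke the greedy optimality of $\pi_t$ against a convex combination weighted by $w^\star$, bound the cross term by Cauchy--Schwarz, and solve the resulting one-step recursion. Recursing on $T_t$ (plain KSD plus the cumulative Laplacian term) rather than on $a_t=t^2\KSD^2$ as the paper does is a cosmetic difference: both yield a recursion of the same shape $X_t \le X_{t-1} + C + 2c_0\sqrt{X_{t-1}} + \lambda t S_2$, and $\Delta^+\log p\ge 0$ lets you pass back to the KSD at the end. Your greedy-comparison step $\Phi_t(\pi_t)\le\sum_i w_i^\star\Phi_t(i)$ is also equivalent to the paper's choice of comparison point $\bx_t^\star=\arg\min f_{t-1}$.

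The gap is the step where you ``treat the entropy purely as an additive step error'' and conclude that dividing $\sum_{t\le m} 2\lambda t\max_i|\log p(\bx_i)|\le 2\lambda m^2\max_i|\log p(\bx_i)|$ by $m^2$ controls its contribution. That would be valid if the per-step error simply accumulated, but the recursion $T_t\le T_{t-1}+2c_0\sqrt{T_{t-1}}+C+\epsilon_t$ is not additive in the $\epsilon_t$: an early perturbation inflates $\sqrt{T_{t-1}}$ and is thereby amplified downstream. Concretely, take $C=0$, $\epsilon_1=\epsilon>0$, and $\epsilon_t=0$ for $t\ge 2$; then the unperturbed recursion gives $\tilde T_2=4c_0^2$, whereas $T_2\le(\sqrt{c_0^2+\epsilon}+c_0)^2\approx 4c_0^2+2\epsilon$ for small $\epsilon$, so the deficit after two steps is already roughly $2\epsilon$, not $\epsilon$. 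So ``just sum the $\epsilon_t$'' is not an upper bound. The correct handling, which is what the paper's proof does, is to bake the constant $\lambda S_2$ (with $S_2=2\max_i|\log p(\bx_i)|$) into the inductive hypothesis $T_t\le t^2\big(c_0^2+C_t+\lambda S_2\big)$ and check that the per-step $\lambda t S_2$ combines with the extra $(t-1)\lambda S_2$ produced by the AM--GM bound $2c_0\sqrt{c_0^2+C_{t-1}+\lambda S_2}\le 2c_0^2+C_{t-1}+\lambda S_2$ to exactly supply the required growth $(2t-1)\lambda S_2$ from $(t-1)^2$ to $t^2$. Your final constant $2\lambda\max_i|\log p(\bx_i)|$ agrees with the paper's because the amplification factor (roughly $2$) happens to match the slack you introduced by bounding $m(m+1)\le 2m^2$; that is a coincidence, not a derivation, and the induction is genuinely needed to justify it.
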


\renewcommand\thetheorem{4}
\begin{lemma} \label{lemma_max_fxixi}
    Let $f$ be a non-negative function on $\Rd$. Consider a sequence of random variables $(\bX_i)_{i\in\mathbb{N}} \subset \Rd$ such that, for some $\gamma > 0$,
    \begin{align*}
        b \overset{def}{=} \sup_{i \in \mathbb{N}} \E[ e^{\gamma f(\bX_i)} ] < \infty.
    \end{align*}
    If $\log(n)^{\alpha} < m_n < n$, with any $\alpha > 1$, then we have almost surely,
    \begin{align*}
        \lim \limits_{n \to \infty} \frac{\log(m_n)}{m_n} \max_{i=1,\dots,n} f(\bX_i) = 0.
    \end{align*}
\end{lemma}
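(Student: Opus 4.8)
The plan is a Borel--Cantelli argument driven by the exponential moment hypothesis. First I would record the basic tail estimate: for any $t>0$ and any index $i$, Markov's inequality applied to $e^{\gamma f(\bX_i)}$ gives $\mathbb{P}(f(\bX_i)>t)=\mathbb{P}(e^{\gamma f(\bX_i)}>e^{\gamma t})\le b\,e^{-\gamma t}$, and a union bound over $i\in\{1,\dots,n\}$ yields
\[
\mathbb{P}\Big(\max_{i=1,\dots,n} f(\bX_i)>t\Big)\le n\,b\,e^{-\gamma t}.
\]

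Next, fix $\varepsilon>0$ and set $t_n=\varepsilon\, m_n/\log(m_n)$, so that the event $A_n:=\{\tfrac{\log(m_n)}{m_n}\max_{i\le n}f(\bX_i)>\varepsilon\}$ coincides with $\{\max_{i\le n}f(\bX_i)>t_n\}$ and hence $\mathbb{P}(A_n)\le n\,b\,e^{-\gamma t_n}$. The crucial input is the lower bound $m_n>\log(n)^{\alpha}$: since $x\mapsto x/\log x$ is increasing for $x>e$, for all $n$ large enough we obtain $t_n\ge \varepsilon\,\log(n)^{\alpha}/(\alpha\log\log n)$, whence
\[
\mathbb{P}(A_n)\le b\,\exp\!\Big(\log n-\tfrac{\gamma\varepsilon}{\alpha}\,\tfrac{\log(n)^{\alpha}}{\log\log n}\Big)=b\,\exp\!\Big(\log n\Big(1-\tfrac{\gamma\varepsilon}{\alpha}\,\tfrac{\log(n)^{\alpha-1}}{\log\log n}\Big)\Big).
\]
Because $\alpha>1$, the ratio $\log(n)^{\alpha-1}/\log\log n$ tends to $\infty$, so the parenthesised factor is eventually $\le-2$, giving $\mathbb{P}(A_n)\le b\,n^{-2}$ for all large $n$; in particular $\sum_n\mathbb{P}(A_n)<\infty$.

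By the first Borel--Cantelli lemma $\mathbb{P}(A_n\text{ i.o.})=0$, i.e. almost surely $\tfrac{\log(m_n)}{m_n}\max_{i\le n}f(\bX_i)\le\varepsilon$ for all $n$ large. Applying this with $\varepsilon=1/k$ for every $k\in\mathbb{N}$ and intersecting the resulting probability-one events shows that, almost surely, $\limsup_n \tfrac{\log(m_n)}{m_n}\max_{i\le n}f(\bX_i)=0$; since $f\ge 0$ the expression is non-negative, so the limit exists and equals $0$, which is the claim.

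The computation is essentially routine and mirrors Lemma~5 of \citet{riabiz2020optimal}; the only step requiring genuine care is the summability estimate, and this is exactly where the hypothesis $\alpha>1$ (rather than $\alpha=1$) is used --- it forces $\log(n)^{\alpha-1}/\log\log n\to\infty$, so that the $\log(n)^{\alpha}/\log\log n$ term dominates $\log n$. Note also that no monotonicity of $n\mapsto m_n$ is needed, since Borel--Cantelli is applied directly to the events $A_n$.
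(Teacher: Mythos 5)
Your proof is correct, and it follows essentially the same route the paper takes by deferring to \citet{riabiz2020optimal} (Lemma 5): a Chernoff/Markov tail bound from the exponential moment hypothesis, a union bound over the $n$ observations, a summability estimate that hinges on $m_n>\log(n)^{\alpha}$ with $\alpha>1$, and the first Borel--Cantelli lemma. The paper's own write-up only records the final summability modification of the cited argument, whereas you have worked the argument out self-contained from scratch; the one place to be slightly careful (which you handle correctly) is invoking monotonicity of $x\mapsto x/\log x$ only for $n$ large enough that both $m_n$ and $\log(n)^{\alpha}$ exceed $e$.
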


The proofs of Lemmas \ref{lemma_bound_ksd} and \ref{lemma_max_fxixi} are reported at the end of this section. We first proceed with the proof of Theorem \ref{thm_MCMC}.

\begin{proof}[Proof of Theorem \ref{thm_MCMC}]
From Lemma \ref{lemma_bound_ksd}, we have
\begin{align*}
    \KSD^2\big(\frac{1}{m_n}\sum_{j=1}^{m_n} \delta(\bZ_{\pi_j})\big) \leq 
    & \underbrace{\KSD^2 \big(\sum_{i=1}^{n} w_i^{\star} \delta(\bZ_i)\big)}_{(\star)}
    + \underbrace{\frac{1+\log(m_n)}{m_n} \max_{i=1,\dots,n}k_p(\bZ_i,\bZ_i)}_{(\star\star)} \\ &+ \underbrace{\frac{1+\log(m_n)}{m_n} \max_{i=1,\dots,n} \Delta^{+} \log(p(\bZ_i))}_{(\star\star\star)}
    + \underbrace{2 \lambda_{m_n} \max_{i=1,\dots,n} |\log(p(\bZ_i))|}_{(\diamond)}.
\end{align*}
\citet{riabiz2020optimal}[Proof of Theorem 3, p.12] showed that the term $(\star)$ converges towards $0$ almost surely as $n \rightarrow \infty$. For the remaining terms, from Assumption \ref{assumption_MCMC}, we have
\begin{align*}
    \underset{i\in\mathbb{N}}{\sup}~\E[e^{\gamma (d/\ell^2 + \|s_p(\bZ_i)\|_2^2)}] < \infty\,, \quad \underset{i\in\mathbb{N}}{\sup}~\E[e^{\gamma |\log(p(\bZ_i))|}] < \infty\,,
\end{align*}
and
\begin{align*}
    \underset{i \in \mathbb{N}}{\sup}~\E[e^{\gamma \Delta^+ \log(p(\bZ_i))}] < \infty\,.
\end{align*}
We can use Lemma \ref{lemma_max_fxixi} with $f(\bx) = k_p(\bx,\bx)$ and $f(\bx) = \Delta^+ \log(\bx)$ to deduce that $(\star\star) \rightarrow 0$ and $(\star\star\star) \rightarrow 0$, respectively. The remaining term $(\diamond)$ can be rewritten as
\begin{align*}
    2 \lambda_{m_n} \max_{i=1,\dots,n} |\log(p(\bZ_i))| & = \frac{2 m_n \lambda_{m_n}}{\log(m_n)} \times \frac{\log(m_n)}{m_n} \max_{i=1,\dots,n} |\log(p(\bZ_i))|.
\end{align*}
Using the assumption that $\smash{\lambda_{m_n} = o(\log(m_n)/m_n)}$ and Lemma \ref{lemma_max_fxixi} with $f(\bx) = |\log(p(\bx))|$, we conclude that $(\diamond) \rightarrow 0$. It follows that $\KSD^2\big(\frac{1}{m_n}\sum_{j=1}^{m_n} \delta(\bx_{\pi_j})\big) \rightarrow 0$ almost surely as $n \rightarrow \infty$. Given that $p$ is assumed to be distantly dissipative, we apply Theorem $4$ from \citet{chen2019stein} to obtain that $\bQ_{m_n} \Rightarrow \bP$ almost surely, as $n \rightarrow \infty$.
\end{proof}

\begin{proof}[Proof of Lemma \ref{lemma_bound_ksd}]
    We consider an iteration $t \in \{2, \hdots, m\}$ of regularized Stein thinning, where $m \geq 2$ is the final length of the thinned sample.
    We define $a_t = t^2 \KSD^2(\bP, \frac{1}{t}\sum_{j=1}^{t}\delta(\bx_{\pi_j}))$ and $\smash{f_t = \sum_{j=1}^{t} k_p(\bx_{\pi_j},\cdot)}$. We also denote $\smash{S_1^2 = \max_{i=1,\dots,n} k_p(\bx_i,\bx_i) + \max_{i=1,\dots,n} \Delta^{+} \log(p(\bx_i))}$, and $S_2 = 2 \max_{i=1,\dots,n} |\log(p(\bx_i))|$. Using the definition of the squared KSD of an empirical measure, we have
    \begin{align*}
        a_t = a_{t-1} + k_p(\bx_{\pi_t},\bx_{\pi_t}) + 2\sum_{j=1}^{t-1}k_p(\bx_{\pi_j},\bx_{\pi_t}).
    \end{align*}
    Let $\bx_t^{\star} = \arg\min_{\by\in\{\bx_i\}_{i=1}^{n}} f_{t-1}(\by)$. By definition, $\bx_{\pi_t}$ minimizes the cost function of the regularized Stein thinning algorithm at iteration $t$, and we have
    \begin{align*}
        k_p(\bx_{\pi_t},\bx_{\pi_t}) & + 2\sum_{j=1}^{t-1}k_p(\bx_{\pi_j},\bx_{\pi_t}) + \Delta^{+} \log(p(\bx_{\pi_t})) - \lambda t \log(p(\bx_{\pi_t})) \\[-1em]
        & \leq k_p(\bx^{\star}_t,\bx^{\star}_t) + 2\sum_{j=1}^{t-1}k_p(\bx_{\pi_j},\bx^{\star}_t) + \Delta^{+} \log(p(\bx^{\star}_t)) - \lambda t \log(p(\bx^{\star}_t)).
    \end{align*}
    We combine this last inequality with the first equation to obtain
    \begin{align*}
        a_t \leq a_{t-1} + k_p(\bx^{\star}_t,\bx^{\star}_t)  + 2\sum_{j=1}^{t-1}k_p(\bx_{\pi_j},&\bx^{\star}_t) +
        \Delta^{+} \log(p(\bx^{\star}_t)) - \Delta^{+} \log(p(\bx_{\pi_t})) \\[-1em]
        &- \lambda t (\log(p(\bx^{\star}_t)) - \log(p(\bx_{\pi_t}))),
    \end{align*}
    and then,
    \begin{align*}
        a_t \leq a_{t-1} + k_p(\bx^{\star}_t,\bx^{\star}_t)  + 2\sum_{j=1}^{t-1}k_p(\bx_{\pi_j},&\bx^{\star}_t) +
        \Delta^{+} \log(p(\bx^{\star}_t)) - \Delta^{+} \log(p(\bx_{\pi_t})) \\[-1em]
        & + \lambda t (|\log(p(\bx^{\star}_t))| + |\log(p(\bx_{\pi_t}))|).
    \end{align*}
    By definition, $0 \leq |\log(p(\bx_i))| \leq S_2$ and $k_p(\bx^{\star}_t,\bx^{\star}_t) + \Delta^{+} \log(p(\bx^{\star}_t)) - \Delta^{+} \log(p(\bx_{\pi_t})) \leq S_1^2$,
    hence, we have
    \begin{align*}
        a_t \leq a_{t-1} + S_1^2 + t \lambda S_2 + 2\min_{\by\in\{\bx_i\}_{i=1}^{n}} f_{t-1}(\by).
    \end{align*}
    As in the proof of \cite{riabiz2020optimal}, we have $\min_{\by\in\{\bx_i\}_{i=1}^{n}} f_{t-1}(\by) \leq \sqrt{a_{t-1}}\|h^{\star}\|_{\mathcal{H}(k_p)}$ where $h^{\star}$ is the element in the RKHS $\mathcal{H}(k_p)$ of the form $h^{\star} = \sum_{i=1}^{n} w_i^{\star}k_p(\bx_i, \cdot)$. As a result, $a_t$ is bounded as follows:
    \begin{align*}
        a_t \leq a_{t-1} + S_1^2 + t \lambda S_2 + 2\sqrt{a_{t-1}} \|h^{\star}\|_{\mathcal{H}(k_p)}.
    \end{align*}
    We then show by induction that 
    \begin{align*}
        a_t \leq t^2(\|h^{\star}\|^2_{\mathcal{H}(k_p)} + C_t + \lambda S_2),
    \end{align*}
    where 
    \begin{align*}
        C_t \overset{def}{=} \frac{1}{t}\left(S_1^2 - \|h^{\star}\|^2_{\mathcal{H}(k_p)}\right)\sum_{j=1}^{t}\frac{1}{j}.
    \end{align*}
    With such as a result, we will have $\KSD^2(\bP,\frac{1}{t}\sum_{j=1}^{t}\delta(\bx_{\pi_j})) \leq \KSD^2(\bP,\sum_{i=1}^{n}w_i^{\star}\delta(\bx_{\pi_j})) + C_t + \lambda S_2$ and obtain the advertised result in Theorem \ref{lemma_bound_ksd} for the last iteration $t = m$.
    
    For $t=1$, we have $a_1 = k_p(\bx_{\pi_1},\bx_{\pi_1}) \leq S_1^2$ and thus $a_1 \leq \|h^{\star}\|^2_{\mathcal{H}(k_p)} + C_1 + \lambda S_2$. For a fixed $t \geq 2$, assume that $a_{t-1} \leq (t-1)^2(\|h^{\star}\|^2 _{\mathcal{H}(k_p)} + C_{t-1} + \lambda S_2)$ where $C_{t-1} = \frac{1}{t-1}(S_1^2 - \|h^{\star}\|^2_{\mathcal{H}(k_p)})\sum_{j=1}^{t-1}\frac{1}{j}$. We then have
    \begin{equation} \label{eq:recc}
    \begin{aligned}
        a_t \leq & a_{t-1} + S_1^2 + t \lambda S_2 + 2\sqrt{a_{t-1}}\|h^{\star}\|_{\mathcal{H}(k_p)} \\
        \leq & (t-1)^2(\|h^{\star}\|^2 _{\mathcal{H}(k_p)} + C_{t-1} + \lambda S_2) + S_1^2 + t \lambda S_2 \\ & \hspace*{3cm} + 2(t-1)\sqrt{\|h^{\star}\|^2 _{\mathcal{H}(k_p)} + C_{t-1} + \lambda S_2}\|h^{\star}\|_{\mathcal{H}(k_p)} \\
        & = t^2 (\|h^{\star}\|^2_{\mathcal{H}(k_p)} + C_t + \lambda S_2) + R_t
    \end{aligned}
    \end{equation}
    where
    \begin{align*}
        R_t &= (t-1)^2 C_{t-1} - t^2 C_t  + (1-2t)(\|h^{\star}\|^2_{\mathcal{H}(k_p)} + \lambda S_2) + S_1^2 \\ & \hspace*{2cm} + t \lambda S_2 + 2(t-1)\sqrt{\|h^{\star}\|^2 _{\mathcal{H}(k_p)} + C_{t-1} + \lambda S_2}\|h^{\star}\|_{\mathcal{H}(k_p)} \\
        & = (t-1)^2 C_{t-1} - t^2 C_t + (1-2t)\|h^{\star}\|^2_{\mathcal{H}(k_p)} + S_1^2 \\ & \hspace{2cm} + \lambda S_2(1-t) + 2(t-1)\sqrt{\|h^{\star}\|^2 _{\mathcal{H}(k_p)} + C_{t-1} + \lambda S_2}\|h^{\star}\|_{\mathcal{H}(k_p)}
    \end{align*}
    Using \cite{riabiz2020optimal}[Lemma 1], we have
    \begin{align*}
        2\|h^{\star}\|_{\mathcal{H}(k_p)}\sqrt{\|h^{\star}\|^2 _{\mathcal{H}(k_p)} + C_{t-1} + \lambda S_2} \leq 2\|h^{\star}\|^2_{\mathcal{H}(k_p)} + C_{t-1} + \lambda S_2
    \end{align*}
    It follows from Equation \eqref{eq:recc} that we need $R_t \leq 0$, \textit{i.e.},
    \begin{align*}
        2\|h^{\star}\|^2_{\mathcal{H}(k_p)} + C_{t-1} + \lambda S_2 \leq \frac{t^2 C_t - (t-1)^2 C_{t-1}}{t-1} - \frac{S_1^2 - \|h^{\star}\|^2_{\mathcal{H}(k_p)}}{t-1} +\lambda S_2 + 2\|h^{\star}\|^2_{\mathcal{H}(k_p)}.
    \end{align*}
    The above inequality is always satisfied as long as
    \begin{align*}
        2\|h^{\star}\|^2_{\mathcal{H}(k_p)} + C_{t-1} + \leq \frac{t^2 C_t - (t-1)^2 C_{t-1}}{t-1} - \frac{S_1^2 - \|h^{\star}\|^2_{\mathcal{H}(k_p)}}{t-1} + 2\|h^{\star}\|^2_{\mathcal{H}(k_p)},
    \end{align*}
    which is equivalent to
    \begin{align*}
         t C_t - (t-1) C_{t-1} \geq \frac{1}{t}(S_1^2 - \|h^{\star}\|^2_{\mathcal{H}(k_p)}),
    \end{align*}
    and always true by definition of $C_t$. Hence we have shown that $a_t \leq t^2(\|h^{\star}\|^2_{\mathcal{H}(k_p)} + C_t + \lambda S_2)$. Given that $\|h^{\star}\|_{\mathcal{H}(k_p)}^2 = \KSD^2(\bP, \sum_{i=1}^{n}w_i^{\star}\delta(\bx_i))$, we have
    \begin{align*}
        \KSD^2(\bP, \frac{1}{t}\sum_{j=1}^{t} \delta(\bx_{\pi_j})) \leq \KSD^2(\bP, \sum_{i=1}^{n}w_i^{\star}\delta(\bx_i)) + C_t + \lambda S_2\,,
    \end{align*}
    where $C_t \leq \frac{1+\log(t)}{t} \big( \max_{i=1,\dots,n}k_p(\bx_i,\bx_i) + \max_{i=1,\dots,n} \Delta^{+} \log(p(\bx_i)) \big)$ (see \citep{riabiz2020optimal}[Lemma 2]).
\end{proof}

\begin{proof}[Proof of Lemma \ref{lemma_max_fxixi}]
    We follow the proof of Lemma $5$ from \citep{riabiz2020optimal}. In the last step of the proof, we essentially need to show that
    \begin{align*}
        \sum_{m_n=1}^{\infty} c_1(m_n^2) < \infty \quad \mbox{and} \quad \sum_{m_n=1}^{\infty} c_2(m_n) < \infty
    \end{align*}
    where
    \begin{align*}
        c_1(m_n^2) \overset{def}{=} \frac{2\log(m_n)}{m_n^2}\frac{\log(n b)}{\gamma}\, \quad \mbox{and} \quad c_2(m_n) \overset{def}{=} 4\frac{\log(m_n)}{m_n^2}\frac{\log(n((m_n+1)^2)b)}{\gamma}\,.
    \end{align*}
    With the assumption that $\log(n)^{\beta} \leq m_n < n$ with $\beta > 1$, it is deduced that $c_1(m_n^2) \rightarrow 0$ and $c_2(m_n) \rightarrow 0$ as $n \rightarrow \infty$.
\end{proof}

\section{Laplacian Stein Operator} \label{app:laplacian_stein_operator}
 Instead of the standard Langevin operator, we can use $\mathcal{T}_p(g) = \Delta(pg)/p$, mentioned in \citet[Appendix A.2]{oates2017posterior}. However, such Stein operator introduces similar problems as Pathology I, since  the Stein kernel associated with $\mathcal{T}'_p(g)$ also has spurious minimum in regions where second derivatives of $p$ vanish, as illustrated below. Therefore, it is more appropriate to taylor a specific Laplacian correction as proposed in this paper, which cannot directly be derived from a Stein kernel, since it is not differentiable. In this appendix, we study the operator $\mathcal{T}_p$ defined as $\mathcal{T}_pg = \Delta(p g)/p$ and such that \citep[Appendix A.2]{oates2017posterior}
\begin{align*}
    \E[(\mathcal{T}_pg)(\bZ)] = 0\,,
\end{align*}
for all $g$ belonging to $\mathcal{G}$, and with $\bZ \sim \bP$.
For each dimension $j \in \{1,\dots,d\}$, let $\mathcal{T}_p^j$ be the operator defined as \begin{align*}
(\mathcal{T}_p^jg)(\bx) = \frac{1}{p(\bx)}\Big( \nabla_{x_j}^2p(\bx) g(\bx) + 2\nabla_{x_j}p(\bx) \nabla_{x_j}g(\bx) + p(\bx) \nabla_{x_i}^2g(\bx) \Big)
\end{align*}
for $g: \mathbb{R}^d \rightarrow \mathbb{R}$, and where $x_j$ is the $j$-th coordinate of $\bx$ to simplify notations. The operator $\mathcal{T}_pg$ can then be rewritten as $(\mathcal{T}_pg)(\bx) = \sum_{j=1}^{d} (\mathcal{T}^j_pg)(\bx)$. \citet{gorham2017measuring}[Proposition 2] establishes the closed-form expression of the KSD in the case of the multidimensional Langevin operator. We generalize the proof of \citet{gorham2017measuring}[Proposition 2] for the Laplacian operator $(\mathcal{T}_pg)(\bx) = \Delta(p(\bx)g(\bx))/p(\bx)$ and establish a closed-form expression of the Stein kernel $k_p : \mathbb{R}^d \times \mathbb{R}^d \rightarrow \mathbb{R}$. For a given kernel function $k : \mathbb{R}^d\times\mathbb{R}^d \rightarrow \mathbb{R}$, $k \in \mathcal{C}^{2,2}$, the Stein kernel $k_p$ is given by \citep{gorham2017measuring} 
\begin{align} \label{eq:kp_kpj}
    k_p(\bx,\by) = \sum_{j=1}^{d} k_p^j(\bx,\by),
\end{align}
where 
\begin{align}
    k_p^j(\bx,\by) = \langle \mathcal{T}_p^j(k(\bx,\cdot)), \mathcal{T}_p^j(k(\cdot,\by)) \rangle_{\mathcal{H}_k}\,.
\end{align}
After a few developments, it is found that
\begin{equation} \label{ex:pxpykp}
\begin{aligned}
    p(\bx)p(\by) k_p^j(\bx,\by) & = \nabla^2_{x_j}p(\bx) \nabla^2_{y_j}p(\by) k(\bx,\by) \\
    &+ 2\nabla_{x_j}^2p(\bx) \nabla_{y_j}p(\by) \nabla_{y_j}k(\bx,\by) + p(\by)\nabla^2_{x_j}p(\bx) \nabla^2_{y_j}k(\bx,\by) \\
    &+ 2\nabla_{x_j}p(\bx) \nabla_{y_j}^2p(\by) \nabla_{x_j}k(\bx,\by) + 4\nabla_{x_j}p(\bx)\nabla_{y_j}p(\by) \nabla_{x_i}\nabla_{y_j}k(\bx,\by) \\
    &+ 2p(\by) \nabla_{x_j}p(\bx) \nabla_{x_j}\nabla^2_{y_j} k(\bx,\by) + p(\bx) \nabla^2_{y_j}p(\by) \nabla^2_{x_j}k(\bx,\by) \\
    &+ 2p(\bx)\nabla_{y_j}p(\by) \nabla^2_{x_j}\nabla_{y_j}k(\bx,\by) + p(\bx)p(\by)\nabla^2_{x_j}\nabla^2_{y_j}k(\bx,\by)
\end{aligned}
\end{equation}
A closed-form expression can then be obtained in the case of, \textit{e.g.}, an inverse multiquadratic kernel of the form $k(\bx,\by) = (1 + \|\bx-\by\|_2^2/\ell^2)^{-1/2}$ where $\ell$ denotes the bandwidth. In this case, one has
\begin{equation} \label{eq:gradients_of_imq}
\begin{aligned}
    & \nabla_{y_j}k(\bx,\by) = \frac{1}{\ell^2} k(\bx,\by)^3 (x_j-y_j)\,, \quad \nabla_{x_j}k(\bx,\by) = - \nabla_{y_j}k(\bx,\by) \\
    & \nabla_{y_j}^2k(\bx,\by) = \nabla_{x_j}^2 k(\bx,\by) = -\frac{1}{\ell^2}k(\bx,\by)^3 + \frac{3}{\ell^4}k(\bx,\by)^5 (x_j - y_j)^2 \\
    & \nabla_{x_j}\nabla_{y_j}k(\bx,\by) = \frac{1}{\ell^2}k(\bx,\by)^3 - \frac{3}{\ell^4}k(\bx,\by)^5(x_j-y_j)^2 \\
    & \nabla_{x_j}^2\nabla_{y_j}k(\bx,\by) = \frac{-9k(\bx,\by)^5}{\ell^4}(x_j-y_j) + \frac{15k(\bx,\by)^7}{\ell^6}(x_j-y_j)^3 \\
    & \nabla_{x_j}\nabla_{y_j}^2k(\bx,\by) = - \nabla_{x_j}^2\nabla_{y_j}k(\bx,\by) \\
    & \nabla^2_{x_j} \nabla^2_{y_j} k(\bx,\by) = \frac{9k(\bx,\by)^5}{\ell^4} - \frac{90k(\bx,\by)^7}{\ell^6}(x_j-y_j)^2 + \frac{105 k(\bx,\by)^9}{\ell^8} (x_j-y_j)^4
\end{aligned}
\end{equation}
A closed-form expression of $k_p$ can then be obtained by combining Equations \eqref{eq:kp_kpj}-\eqref{eq:gradients_of_imq}. In contrast to the Langevin Stein kernel (see Equation ($2$) of the main article), the above Stein kernel involves second-order derivatives of the density $p$.

We run experiments based on our Example \ref{example_mixture} of Gaussian mixtures for this new Stein operator $\mathcal{T}_pg = \Delta(p g)/p$. We sequentially set $\mu = 2$ and $\mu = 5$, with $\sigma = 1$ and $w= 0.5$. Results are displayed in Figure \ref{fig:laplace_operator}. In the left panel with $\mu = 2$, we see that Pathology II does not occur, as opposed to Figure $2$ of the main article with the Langevin Stein operator. However, when $\mu$ is set to $5$ in the right panel, all particles are concentrated in spurious minimum again. Therefore, introducing higher-order derivatives of the target density through the Stein operator does not seem to be a promising route.
\begin{figure}
    \centering
    \includegraphics[scale=0.45]{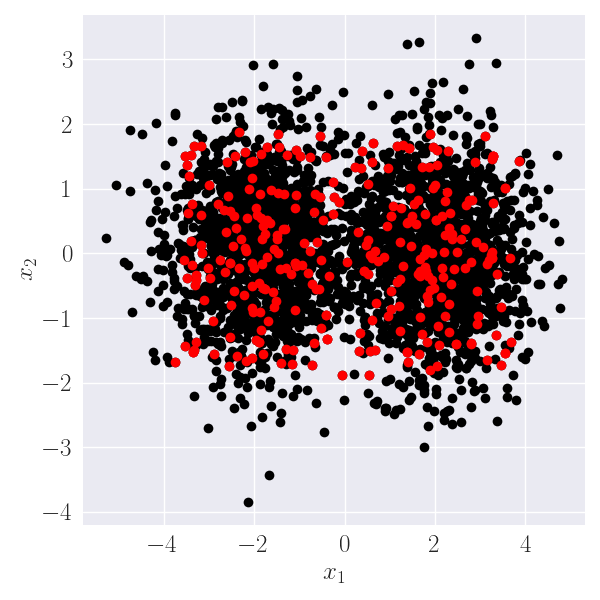}
    \includegraphics[scale=0.45]{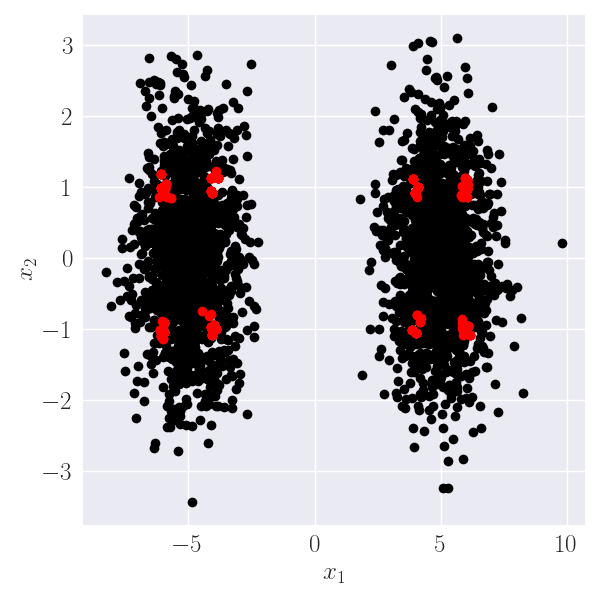}
    \caption{Stein thinning with the Stein operator $\mathcal{T}_pg = \Delta(p g)/p$, for Gaussian mixtures of Example \ref{example_mixture}, with $\mu = 2$ (left panel), and $\mu = 5$ (right panel), $\sigma = 1$, and $w = 0.5$.}
    \label{fig:laplace_operator}
\end{figure}

\end{document}